\documentclass{birkjour}
\usepackage[dvips]{epsfig}
\usepackage{amscd}
\usepackage{amssymb}
\usepackage{amsthm}
\usepackage{amsmath}
\usepackage{mathrsfs}
\usepackage{latexsym}
\usepackage{upref}
\usepackage{hyperref}

\theoremstyle{plain}
\newtheorem{thm}{Theorem}[section]
\theoremstyle{plain}

\newtheorem{lem}[thm]{Lemma}

\newtheorem{cor}[thm]{Corollary}

\theoremstyle{definition}
\newtheorem{defi}{Definition}[section]
\newtheorem{rem}{Remark}

{%
\setcounter{enumi}{0}

\begin{enumerate}}
{\end{enumerate} }

{
\setcounter{enumi}{0}

\begin{enumerate}}
{\end{enumerate} }


\def\e{{\text{e}}}

\numberwithin{equation}{section} \allowdisplaybreaks

\begin{document}

\title[Internal Stabilization of PIDEs]{Internal Stabilization of a Class of Parabolic Integro-Differential Equations:\\ Application to Viscoelastic Fluids}

\author[Sheetal Dharmatti]{Sheetal Dharmatti}

\address{%
School of Mathematics\\
Indian Institute of Science Education and Research (IISER) Thiruvananthapuram\\
Thiruvananthapuram 695016\\
Kerala, INDIA}

\email{sheetal@iisertvm.ac.in}

\author[Utpal Manna]{Utpal Manna}

\address{%
School of Mathematics\\
Indian Institute of Science Education and Research (IISER) Thiruvananthapuram\\
Thiruvananthapuram 695016\\
Kerala, INDIA}

\email{manna.utpal@iisertvm.ac.in}

\author[Debopriya Mukherjee]{Debopriya Mukherjee}

\address{%
School of Mathematics\\
Indian Institute of Science Education and Research (IISER) Thiruvananthapuram\\
Thiruvananthapuram 695016\\
Kerala, INDIA}

\email{debopriya13@iisertvm.ac.in}






\subjclass{93B52; 93C20; 93D15; 35Q35; 35R09; 76A10; 76D55}

\keywords{Parabolic Integro-Differential Equations, Oldroyd Fluid, Jeffreys Fluid, Stablilization.}

\begin{abstract}
In this paper, we prove the stabilizability of abstract Parabolic Integro-Differential Equations (PIDE) in a Hilbert space with decay rate $e^{-\gamma t} $ for certain $\gamma > 0,$ by means of a finite dimensional controller in the feedback form. We determine a linear feedback law which is obtained by solving an algebraic Riccati equation. To prove the existence of the Riccati operator, we consider a linear quadratic optimal control problem with unbounded observation operator.
 The abstract theory of stabilization developed here is applied to specific problems related to viscoelastic fluids, e.g. Oldroyd B model and Jeffreys model. 
\end{abstract}




\maketitle

\section{Introduction}
Mathematical study of control problems for Parabolic Integro-Differential Equations (PIDE) has gained much attention  in recent times due to its applications in fluid flow problems (e.g. in viscoelastic fluids, polymeric fluids), damped harmonic oscillators, heat-flow problems with memory etc.
The imminent prospect of this paper is to provide a general framework for exponential stabilization of the PIDE in abstract form by virtue of finite dimensional feedback controller. In this paper, we consider the following parabolic integro-differential equation in an abstract form 
\begin{align} \label{PIDE1}
\frac{d\textbf{y}}{dt}+ A \textbf{y}(t)+\int_{0}^{t}\beta(t-s)A\textbf{y}(s)
 \,ds&=0,\quad\mbox{for all}\quad t > 0,\\
 \beta^{'}(t)+ \delta \beta(t)&=0,\quad\mbox{for all}\quad t > 0,\label{PIDE11} 
\end{align}
with initial conditions
\begin{align}\label{PIDE12}
 \textbf{y}(0)=\textbf{y}_{0},\quad \beta(0)=1. \quad (\delta > 0)
\end{align}
where $\textbf{y}_{0} \in H$:   a real Hilbert space with norm  $|.|$ and inner product $(.,.)$ . 
Here $\textbf{y}$ is a state variable. 
 $A:D(A) \subset H \rightarrow H$ is assumed to be closed, densely defined,
linear, self-adjoint, positive definite operator with compact resolvent 
$(\lambda I -A)^{-1}$  for some $\lambda \in \rho (A) $, the resolvent set of $A$. 
We will show the exponential stabilizability of the above system via feedback control. In other words,
we will show that there exists $\omega_{0}>0$ such that the system \eqref{PIDE1} - \eqref{PIDE12} is exponentially stable with 
decay rate $\gamma $ for $0<\gamma<\omega_{0}.$

Using the  standard techniques to stabilize the system \eqref{PIDE1} - \eqref{PIDE12},  we  associate with it a controlled system
\begin{align} \label{PIDE2}
\frac{d\textbf{y}}{dt}+ A \textbf{y}(t)+\int_{0}^{t}\beta(t-s)A\textbf{y}(s)ds&=Bu(t),\quad\mbox{for all}\quad t > 0,\\
 \beta^{'}(t)+ \delta \beta(t)&=0,\quad\mbox{for all}\quad t > 0,\label{PIDE3}
\end{align}
with 
\begin{align}\label{PIDE4}
 \textbf{y}(0)=\textbf{y}_{0},\quad \beta(0)=1. \quad (\delta > 0),
\end{align}
where $u:[0,\infty)\longrightarrow U$ represents a control variable, $U$ is assumed to be a Hilbert space and 
$B:U\rightarrow H $ is a bounded linear operator, i.e.,$B\in\mathcal{L}(U,H).$

The main idea of feedback stabilization is to stabilize stationary but possibly unstable solutions 
 of steady state problem associated with the system. In our case, we are considering zero solution of steady state problem which in general, need not be (asymptotically) stable.  
In particular we would like to show that solution of \eqref{PIDE2} - \eqref{PIDE4} satisfies
$$ \| \textbf{y}(t)\| \leq C \exp^{ - \gamma t } \| \textbf{y}(0) \| \; \mbox{ for} \; t \geq 0. $$

Stabilization results for the non-linear parabolic partial differential equations have been actively studied for the past two decades. Feedback stabilization results for general class of non-linear parabolic problems and in particular Navier-Stokes equation using finite dimensional interior controller have been developed by Barbu \cite{Bar}, Triggiani \cite{Triggi} and references therein. Moreover,  boundary stabilization for fluid flow problems have been extensively studied by Barbu \cite{Bar1}, Triggiani \cite{Triggi}, Badra \cite{Bad}, Raymond \cite{raymond1} \cite{raymond2}, to name a few. The main idea of all these works is to design a controller in the feedback form from the solution of an algebraic Riccati equation, such that  the unstable solution trajectories are exponentially stabilized.

As far as PIDE and Volterra integral equations in Banach spaces are concerned, the existence and uniqueness theory is developed using resolvent operators by Grimmer et al. \cite{Gri}, \cite{Grim11}, \cite{GrimPru1}. The resolvent operator is similar to an evolution operator for non-autonomous differential equations in a Banach space. However the  resolvent operator may not be exponentially bounded and hence will not satisfy semigroup property. For more details one can look into \cite{Gri}, \cite{Grim11},  \cite{GrimPru1}, \cite{Des}, \cite{Des1}.
 Desch and Miller \cite{DeschMiller} have studied
  Volterra integro-differential equations in abstract Banach spaces. By introducing concept of essential growth rate for resolvent operators, stability is obtained.  The location of poles of the operator gives the decay rate for stabilization  \cite{DeschMiller}.

The study of abstract PIDE can be applied to specific class of problems namely, viscoelastic fluid flow. The controllability for linear viscoelastic fluid flow problem has been studied in literature recently. Doubova et al \cite{Dou} have studied approximate controllability exploiting unique continuation property. The approximate controllability for the linearized version of Jeffreys model has been investigated by Chowdhury  et. al. \cite{Chow}.  Authors have come to know of recent work on approximate controllability of PIDE by Pani et. al \cite{Kumar}. Detailed references about existence and uniqueness of Oldroyd model and control problems related to it is discussed in Section 7.

This gives a motivation to  study the stabilization of the corresponding non-linear PIDE around unstable solution trajectories of the stationary problem. 
The controllability problem consists of finding a control which steers the system to a particular state in finite time $T$, whereas the stabilzation deals with finding a control,
such that the solution to the closed-loop system is close to the desired trajectory at all times. 

In our work, we have studied feedback exponential stabilization of abstract PIDE and have shown the application of this result to Oldroyd B model and  Jeffreys model,   linearized around zero steady state solution. 
The main contribution of current article are three important results concerning the stabilizability of PIDE and its applications:
\begin{itemize}
\item Existence of finite dimensional control, which will exponentially stabilize system. $\eqref{PIDE1} - \eqref{PIDE12}$ with the decay rate $\gamma$ such that $ 0 < \gamma < \omega_0$.

\item Existence of feedback control of finite dimension by solving  algebraic Riccati equation.

\item Stabilization of Oldroyd B fluid model and Jeffreys model.
\end{itemize}

The existence of finite dimensional controller which would stabilize the abstract PIDE, is obtained using  spectral analysis of the corresponding operator. For this, we decouple the system in finite and infinite dimensional subspaces of $H$ such that the finite dimensional part of solution is null controllable and infinite dimensional projection is exponentialy stabilizable. 
The important remark is that the finite dimension of the feedback controller is minimal and this choice is done depending upon the maximal multiplicity of unstable eigenvalues of the linear equation.
Further, we prove that finite dimensional feedback controller can be obtained by solving algebraic Riccati equation. Towards this result, we associate a linear quadratic control problem with our system. We have studied the problem in the general case where cost functional depends upon fractional power of operator $A$. This makes the observation operator unbounded in nature. 
Using above results, we prove that the Oldroyd B fluid model and Jeffreys model can be stabilized, when linearized around zero unstable solution of corresponding stationary problems. 
The feedback controller is obtained such that it is localized in an open subset of the given domain. To the best of our knowledge, the results of the paper are the first ones providing feedback control laws stabilizing abstract PIDE.

\qquad


The paper is organized as follows. In Section 2, we give some basic definitions, theorems and Lemmas. In particular we have quoted few results from literature about theory of existence of solution for PIDE. Section 3 is devoted to the representation of solution of PIDE,
using Fourier series expansion. We project the corresponding operator on appropriate finite and infinite dimensional subspaces of $H$. The behaviour of the eigenvalues of the  finite dimensional projection of the operator is explained in this section. 
In Section 4, we prove  exponential stabilizability of the system for decay rate $\gamma; $ $ 0 < \gamma < \omega_0$ using finite dimensional controller. We discuss both the cases;  of  semisimple  and  non-semisimple eigenvalues.
 In Section 5, we show that the finite dimensional controller obtained in the previous section  can be  found in feedback form. This has been done by associating a linear quadratic cost problem and proving the existence of Riccati operator which satisfies the algebraic Riccati equation.
 In Section 6, we deal with  stabilizability of non-homogeneous PIDE. Section 7, is devoted to some  applications of our result to Oldroyd B fluid model and Jeffreys model.
We conclude the paper in Section 8 by giving some further remarks and possible extensions.

\section{Preliminaries}
This section is divided into two parts. In the first part, we report some basic definitions and inequalities, which will be useful in the later sections. In the second part, we discuss briefly about the existence theory of the integro-differential equations of Volterra kind.
\subsection{Some basic definitions and inequalities}
\begin{defi}[Asymptotically stability]
 The equilibrium solution $\textbf{y}_e$ is said to be stable or, more precisely, asymptotically
stable if $$\lim_{t\to\infty}\textbf{y}(t,\textbf{y}_0)=\textbf{y}_e$$
for all $\textbf{y}_0$ in a neighbourhood $\mathscr{V}$ of $\textbf{y}_e.$
\end{defi}
\begin{thm} \label{10.02.16.T1}
Let $A$ be a closed and densely defined operator in Hilbert space $H$ with compact resolvent $(\lambda I -A)^{-1}$
for some $\lambda \in \rho(A)$ (the resolvent set of $A$). Then the spectrum $\sigma(A)$ consists of isolated eigenvalues
$\{\lambda_{j}\}_{j=1}^{\infty}$ each of finite algebraic multiplicity $m_{j}.$
\end{thm}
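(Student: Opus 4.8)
The plan is to reduce the statement to the classical Riesz--Schauder spectral theory of compact operators, applied to the resolvent $R_0 := (\lambda_0 I - A)^{-1}$ at the point $\lambda_0 \in \rho(A)$ for which compactness is assumed. First I would record the elementary bookkeeping: $R_0 \in \mathcal{L}(H)$ is injective with range $D(A)$, because by hypothesis $\lambda_0 I - A : D(A) \to H$ is a bijection; in particular $0$ is never an eigenvalue of $R_0$, although $0 \in \sigma(R_0)$ when $H$ is infinite dimensional.

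Next I would establish the spectral correspondence between $A$ and $R_0$. For $\lambda \neq \lambda_0$ put $\mu = (\lambda_0 - \lambda)^{-1}$. Writing $\lambda I - A = R_0^{-1} - (\lambda_0 - \lambda) I$ and factoring $R_0^{-1}$ out on the left yields the identity $\lambda I - A = (\lambda_0 - \lambda)\, R_0^{-1}(\mu I - R_0)$ on $D(A)$ (and $(\lambda_0 - \lambda)(\mu I - R_0)R_0^{-1}$ on $H$). Since $R_0^{-1}$ is a bijection of $D(A)$ onto $H$, this shows that $\lambda I - A$ has a bounded inverse iff $\mu I - R_0$ does, i.e. $\lambda \in \sigma(A) \Longleftrightarrow \mu \in \sigma(R_0)$. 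The map $\lambda \mapsto \mu$ is a homeomorphism of $\mathbb{C}\setminus\{\lambda_0\}$ onto $\mathbb{C}\setminus\{0\}$ which, since $\lambda_0 \notin \sigma(A)$, carries $\sigma(A)$ bijectively onto $\sigma(R_0)\setminus\{0\}$.

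Now I would invoke the Riesz--Schauder theorem for the compact operator $R_0$: $\sigma(R_0)$ is at most countable, its only possible accumulation point is $0$, and each $\mu \in \sigma(R_0)\setminus\{0\}$ is an eigenvalue of finite algebraic multiplicity. Transporting this through the homeomorphism above shows that $\sigma(A)$ is an at most countable set of isolated points with no finite accumulation point, each of which is an eigenvalue of $A$; this gives the enumeration $\sigma(A) = \{\lambda_j\}_{j\ge 1}$.

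Finally I would match multiplicities: iterating the factorization and using that $R_0^{-1}$ commutes with $\mu_j I - R_0$ on the relevant domains gives $(\lambda_j I - A)^k = (\lambda_0 - \lambda_j)^k R_0^{-k}(\mu_j I - R_0)^k$ on $D(A^k)$, while conversely $(\mu_j I - R_0)^k x = 0$ with $\mu_j \neq 0$ forces $x \in \mathrm{Ran}(R_0) \subseteq D(A)$ and inductively $x \in \mathrm{Ran}(R_0^k) \subseteq D(A^k)$. Injectivity of $R_0^{-k}$ then yields $\ker(\lambda_j I - A)^k = \ker(\mu_j I - R_0)^k$ for every $k$, so the generalized eigenspaces of $A$ at $\lambda_j$ and of $R_0$ at $\mu_j$ coincide and $m_j$ is finite. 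The only genuinely delicate point is this domain bookkeeping for powers of the unbounded operator $\lambda_j I - A$; everything else is a direct transcription of the compact-operator spectral theorem (this result is also classical, see e.g. Kato's monograph on perturbation theory of linear operators).
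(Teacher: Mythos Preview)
Your argument is correct and is the standard reduction of the spectral theory of an operator with compact resolvent to the Riesz--Schauder theory for the compact operator $R_0=(\lambda_0 I-A)^{-1}$; the spectral mapping $\lambda\mapsto\mu=(\lambda_0-\lambda)^{-1}$ and the identification of generalized eigenspaces are handled cleanly, and your domain bookkeeping for the powers $(\lambda_j I-A)^k$ is the right point to flag as the only nontrivial step.

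As for comparison: the paper does not actually give a proof of this theorem. Immediately after the statement it simply says ``This is a particular version of Riesz--Schauder--Fredholm theorem. For further details, see Yosida, page 283.'' So there is no competing argument to compare against; your write-up is essentially a self-contained version of what that reference contains. One small remark: strictly speaking, without further hypotheses (such as self-adjointness, which the paper imposes elsewhere on $A$) the spectrum could in principle be finite or even empty, so the enumeration $\{\lambda_j\}_{j=1}^{\infty}$ in the theorem statement tacitly uses more than is stated; your phrasing ``at most countable'' is actually more accurate than the paper's formulation.
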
 
This is a particular version of Riesz-Schauder-Fredholm theorem.
For further details, see Yosida \cite{Yosida}, page 283.
\begin{rem}
Using Riesz-Fredholm theory we can conlcude that $A$ has a countable set of real eigenvalues $\lambda_{j}$ each of which
is of finite algebraic multiplicity $m_{j}$ and  corresponding
set of eigenvectors $\phi_{j}$, that is,
\begin{align*}
A\phi_{j}&= \lambda_{j} \phi_{j}\qquad j=1,2,...\\
\mbox{with}\quad \lambda_{j}&\to\infty,\quad j\to \infty.
\end{align*}
For each $\lambda_{j},$ there is a finite number $m_{j}$ of linear independent eigenvectors 
$\{\phi_{j}^{i}\}_{i=1}^{m_{j}} .$ As $A$ is self-adjoint, we note that $\{\phi_{n}\}_{n\in\mathbb{N}}$
forms orthonormal basis of $H$.
\end{rem}
\begin{lem} \label{13.05.15.L1}
 Let $\,\phi \in L^{2}(0,t^*)\,,\, t^{*}>0\,$, then
 \begin{align*}
  \int_{0}^{t^{*}}\int_{0}^{t}\beta(t-s)\phi(s)\,\phi(t) ds\,dt \geqslant 0.
 \end{align*}
\end{lem}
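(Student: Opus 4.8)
The plan is to exploit the fact that $\beta$ is the fundamental solution of a first–order linear ODE, which converts the Volterra quadratic form into a perfect–derivative term plus a manifestly nonnegative term. From \eqref{PIDE11}--\eqref{PIDE12} we have $\beta(t)=e^{-\delta t}$. Introduce the ``resolved memory''
\[
 z(t):=\int_{0}^{t}\beta(t-s)\,\phi(s)\,ds=\int_{0}^{t}e^{-\delta(t-s)}\phi(s)\,ds,\qquad t\in[0,t^{*}],
\]
so that the quantity to be estimated is exactly $\int_{0}^{t^{*}}z(t)\,\phi(t)\,dt$. Since $\phi\in L^{2}(0,t^{*})\subset L^{1}(0,t^{*})$ and the kernel is smooth, $z$ is continuous with $z(0)=0$, and differentiating under the integral sign yields $z'(t)=\phi(t)-\delta z(t)$ for a.e.\ $t$; in particular $z'\in L^{2}(0,t^{*})$, so $z\in H^{1}(0,t^{*})$.

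Next I would substitute $\phi=z'+\delta z$ and integrate by parts:
\[
 \int_{0}^{t^{*}}z(t)\,\phi(t)\,dt=\int_{0}^{t^{*}}z(t)\,z'(t)\,dt+\delta\int_{0}^{t^{*}}z(t)^{2}\,dt=\tfrac12\bigl(z(t^{*})^{2}-z(0)^{2}\bigr)+\delta\int_{0}^{t^{*}}z(t)^{2}\,dt.
\]
Because $z(0)=0$ and $\delta>0$, the right–hand side equals $\tfrac12 z(t^{*})^{2}+\delta\int_{0}^{t^{*}}z(t)^{2}\,dt\geqslant 0$, which is the assertion of the lemma.

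The only point that needs a word of justification is the differentiation of $z$ and the integration by parts under the bare hypothesis $\phi\in L^{2}$; this is routine (a convolution of an $L^{1}$ function with a $C^{\infty}$ kernel is absolutely continuous, and integration by parts is valid for $H^{1}$ functions). If one prefers to sidestep it entirely, one can first prove the inequality for $\phi\in C^{1}([0,t^{*}])$, where every manipulation is classical, and then pass to arbitrary $\phi\in L^{2}(0,t^{*})$ by density, using that the bilinear form $\phi\mapsto\int_{0}^{t^{*}}\!\int_{0}^{t}\beta(t-s)\phi(s)\phi(t)\,ds\,dt$ is continuous on $L^{2}(0,t^{*})$ since $\beta$ is bounded on $[0,t^{*}]$. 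I do not anticipate any real obstacle here: the content of the lemma is simply that the exponential memory kernel is a kernel of positive type, and the ODE satisfied by $\beta$ makes this completely explicit.
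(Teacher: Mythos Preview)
Your argument is correct. Writing $z(t)=\int_0^t e^{-\delta(t-s)}\phi(s)\,ds$, the identity $z'=\phi-\delta z$ and the computation
\[
\int_0^{t^*} z\phi\,dt=\tfrac12 z(t^*)^2+\delta\int_0^{t^*} z^2\,dt\geq 0
\]
are exactly right, and your remark that $z\in H^1(0,t^*)$ (or, alternatively, the density argument) fully justifies the integration by parts.

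As for comparison with the paper: the paper does not actually prove this lemma at all---it simply cites Sobolevskii and McLean--Thom\'ee for a proof. Those references treat the more general fact that a completely monotone kernel (or a kernel whose Laplace transform has nonnegative real part on the imaginary axis) is of positive type, typically via a Fourier/Plancherel or Laplace-transform argument. Your approach, by contrast, is tailored to the specific kernel $\beta(t)=e^{-\delta t}$ and exploits the ODE \eqref{PIDE11} that $\beta$ satisfies; this makes the proof entirely elementary and self-contained, at the cost of not generalizing to other positive-type kernels. For the purposes of this paper, where only the exponential kernel is ever used, your route is arguably preferable.
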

For a proof we refer Sobolevskii (\cite{Sobolev}, page-1601), McLean and Thom\'{e}e \cite{McLean}.\\

\begin{thm} \label{07.03.16.E1}
(Gronwall's lemma). Let $ g,h,y $ be three locally integrable non-negative functions on the time interval $[0,\infty)$ such that for all $t \geq 0,$
\begin{align*}
y(t)+G(t) \leq C+ \int_{0}^{t}h(s)ds+\int_{0}^{t} g(s)y(s)ds,
\end{align*}
where $G(t)$ is a non-negative function on $[0,\infty)$ and $C \geq 0$ is a constant. Then,
\begin{align*}
y(t)+G(t) \leq \left(C + \int_{0}^{t}h(s)ds\right)exp\left(\int_{0}^{t}g(s)ds\right).
\end{align*}
\end{thm}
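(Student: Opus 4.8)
The plan is to use the standard majorant argument, which needs only minor bookkeeping to accommodate the extra non-negative term $G(t)$. Set
\[
\varphi(t) := C + \int_{0}^{t} h(s)\,ds + \int_{0}^{t} g(s)\,y(s)\,ds, \qquad a(t) := C + \int_{0}^{t} h(s)\,ds ,
\]
so that the hypothesis reads $y(t) + G(t) \leq \varphi(t)$ for all $t \geq 0$; in particular, since $G \geq 0$, we have $y(t) \leq \varphi(t)$. It therefore suffices to prove $\varphi(t) \leq a(t)\exp\left(\int_{0}^{t} g(s)\,ds\right)$, because then $y(t) + G(t) \leq \varphi(t) \leq a(t)\exp\left(\int_{0}^{t} g(s)\,ds\right)$, which is exactly the assertion (the term $G(t)$ simply rides along on the left-hand side).

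First I would note that $\varphi$ is locally absolutely continuous on $[0,\infty)$, being a constant plus indefinite integrals of $L^{1}_{\mathrm{loc}}$ functions, with $\varphi(0) = C$ and, for a.e. $t \geq 0$,
\[
\varphi'(t) = h(t) + g(t)\,y(t) \leq h(t) + g(t)\,\varphi(t),
\]
using $y(t) \leq \varphi(t)$ and $g \geq 0$. Then I would introduce the integrating factor $\mu(t) := \exp\left(-\int_{0}^{t} g(s)\,ds\right)$, which is locally absolutely continuous and bounded on compact intervals, so $\mu\varphi$ is locally absolutely continuous and, for a.e. $t$,
\[
\frac{d}{dt}\big(\mu(t)\varphi(t)\big) = \mu(t)\big(\varphi'(t) - g(t)\varphi(t)\big) \leq \mu(t)\,h(t) \leq h(t),
\]
using $0 < \mu(t) \leq 1$ and $h \geq 0$. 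Integrating over $[0,t]$ by the fundamental theorem of calculus for absolutely continuous functions yields $\mu(t)\varphi(t) - C \leq \int_{0}^{t} h(s)\,ds$, i.e. $\varphi(t) \leq a(t)\,\mu(t)^{-1} = a(t)\exp\left(\int_{0}^{t} g(s)\,ds\right)$, and combining with $y(t) + G(t) \leq \varphi(t)$ finishes the proof.

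I do not expect a genuine obstacle here: the statement is classical, and the only points requiring a little care are (i) the auxiliary non-negative term $G(t)$, which is handled at the outset by passing to the majorant $\varphi$ and observing that the conclusion retains $G$ on the left; and (ii) the fact that $g,h,y$ are merely locally integrable, which means one must argue with locally absolutely continuous functions and the fundamental theorem of calculus rather than classical differentiation, and must tacitly read the hypothesis as including $g\,y \in L^{1}_{\mathrm{loc}}$ (implicit in writing the integral $\int_{0}^{t} g(s)\,y(s)\,ds$). If one prefers to avoid differentiation altogether, the same bound follows from the usual Picard-type iteration: substitute $y(s) \leq \varphi(s)$ repeatedly into the definition of $\varphi$, use the monotonicity of $a$ and of $\varphi$ to control the successive remainders by $\tfrac{1}{n!}\big(\int_{0}^{t} g(s)\,ds\big)^{n}\varphi(t)$, and let $n \to \infty$ to sum the exponential series; this recovers $y(t) + G(t) \leq a(t)\exp\left(\int_{0}^{t} g(s)\,ds\right)$ with no regularity beyond local integrability.
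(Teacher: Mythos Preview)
Your proof is correct and entirely standard. Note, however, that the paper does not supply its own proof of this statement: Gronwall's lemma is simply quoted in the Preliminaries section as a well-known result and used later without justification. There is therefore nothing to compare your argument against; your integrating-factor approach (and the alternative Picard iteration you outline) is exactly the classical route one would take.
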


Now we state the Interpolation inequality for fractional powers.
\begin{lem} \label{15.02.16.L1}
Interpolation inequality: Let $F : D(F) \subseteq X\to X$ be a positive self-adjoint operator
in the Hilbert space $X$, and let $0 \leqslant \theta \leqslant 1$. Then
\begin{align*}
\|F^{\theta}v\|_{X} \leqslant \|Fv\|_{X}^{\theta}\|v\|_{D(F)}^{1-\theta}
\leqslant \theta\|Fv\|_{X}+(1-\theta)\|v\|_{D(F)}\quad \forall  v \in D(F).
\end{align*}
\end{lem}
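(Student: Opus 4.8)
The plan is to reduce both inequalities to elementary facts via the spectral theorem for the positive self-adjoint operator $F$. Let $\{E_\lambda\}$ denote the spectral resolution of $F$, so that $F=\int_{\sigma(F)}\lambda\,dE_\lambda$ with $\sigma(F)\subseteq[0,\infty)$, and for $v\in D(F)$ introduce the positive finite measure $\mu_v(d\lambda):=d\langle E_\lambda v,v\rangle_X$, which satisfies $\|v\|_X^2=\mu_v(\sigma(F))$ and $\|Fv\|_X^2=\int\lambda^2\,\mu_v(d\lambda)<\infty$. First I would record the pointwise bound $\lambda^{2\theta}\le 1+\lambda^2$ for all $\lambda\ge 0$ and $0\le\theta\le 1$; this shows $D(F)\subseteq D(F^\theta)$ and makes the representation $\|F^\theta v\|_X^2=\int\lambda^{2\theta}\,\mu_v(d\lambda)$ legitimate for every $v\in D(F)$.

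The core step is then Hölder's inequality on the measure space $(\sigma(F),\mu_v)$ with conjugate exponents $p=1/\theta$ and $q=1/(1-\theta)$ (the cases $\theta=0$ and $\theta=1$ being trivial):
\begin{align*}
\|F^\theta v\|_X^2=\int\lambda^{2\theta}\cdot 1\,\mu_v(d\lambda)\le\left(\int\lambda^2\,\mu_v(d\lambda)\right)^{\theta}\left(\int 1\,\mu_v(d\lambda)\right)^{1-\theta}=\|Fv\|_X^{2\theta}\,\|v\|_X^{2(1-\theta)}.
\end{align*}
Taking square roots gives $\|F^\theta v\|_X\le\|Fv\|_X^{\theta}\|v\|_X^{1-\theta}$, and since $\|v\|_X\le\|v\|_{D(F)}$ in the graph norm, the first asserted inequality follows at once.

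For the second inequality I would invoke the weighted arithmetic--geometric mean inequality $a^{\theta}b^{1-\theta}\le\theta a+(1-\theta)b$, valid for all $a,b\ge 0$ and $0\le\theta\le 1$ (immediate from concavity of $\log$, equivalently from Young's inequality), applied with $a=\|Fv\|_X$ and $b=\|v\|_{D(F)}$. Chaining the two estimates yields the displayed chain of inequalities.

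The argument is essentially routine; the only point requiring genuine care is ensuring that $v\in D(F)$ really lies in $D(F^\theta)$, so that the left-hand side is finite and the spectral-integral manipulations are justified --- this is exactly what the bound $\lambda^{2\theta}\le 1+\lambda^2$ provides. If one prefers to bypass the spectral calculus, an alternative is Hadamard's three-lines theorem applied to the analytic family $z\mapsto\langle F^{z}v,w\rangle_X$ on the strip $0\le\operatorname{Re}z\le 1$, but the spectral-measure proof above is the most transparent in the present self-adjoint setting.
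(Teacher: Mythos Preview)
Your proof is correct. The paper itself does not give a proof of this lemma at all: it simply writes ``For proof see Sohr \cite{Sohr} (page 99, Lemma 3.2.2)'' and moves on. Your spectral-measure argument via H\"older's inequality, followed by the weighted AM--GM inequality for the second bound, is exactly the standard proof of this interpolation inequality (and is essentially what one finds in Sohr), so there is nothing to compare.
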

For proof see Sohr \cite{Sohr} (page 99, Lemma 3.2.2).

\subsection{Existence of solution for integro-differential equations}\label{25.02.16.S1}
To deal with system \eqref{PIDE2}--\eqref{PIDE4}, we need the existence theory for parabolic integro-differential equations. Detailed study of such equations can be found in [\cite{Cordu}, p.235-245],[\cite{GrimPru1},\cite{Des}, \cite{Gri}, \cite{Grim11}]. In this subsection, we quote few results which are relevant to our model. \\
Consider the integro-differential equation of the form:
\begin{align} \label{24.02.16.E1}
x'(t)&=A_1x(t)+\int_{0}^{t}K(t-s)x(s)\,ds+f(t)\\\label{30.03.16.E3}
x(0)&=x_{0} \in D(A_1) \subset X.
\end{align}
where $X$ stands for a Banach space (real or complex).
The following hypothesis attached to the operators $A_1$ and $K(t),t\geq 0$ are :
\begin{itemize}
\item[(i)] $A_1$ is the infinitesimal generator of a semigroup of bounded linear operators acting in $X.$ 
Since $A_1$ is closed, $D(A_1)$ can be organized as a Banach space with the graph norm:
$x \rightarrow |x|+|A_1x|.$ This Banach space will be denoted by $(Y,|.|_{Y}).$
\item[(ii)] $\{K(t);t \geq 0\}$ is a family of bounded linear operators from $Y$ into $X.$
\item[(iii)] For every $x \in Y,$ the function $(Kx)(t)=K(t)x$ is Bochner measurable 
(from $[0,\infty)$ to $X.$)
\item[(iv)]Let
\begin{align*}
|K(\cdot)| \in L^{1}[0,\infty),
\end{align*}
where $|K(t)|$ means the usual norm of the (bounded) linear operator $K(t)$ from $Y$ into $X.$
\end{itemize}
\begin{defi}
 Let $x_0 \in Y.$ A solution of \eqref{24.02.16.E1}-\eqref{30.03.16.E3} is a function that belongs to $C([0,\infty),Y)\cap C^1([0,\infty),X)$
 so that $x(0)=x_0$ and  \eqref{24.02.16.E1}-\eqref{30.03.16.E3} is satisfied for all $t\geqslant 0.$
\end{defi}
For detailed study, see Desch et al. \cite{Des}.\\
Let us now define the resolvent operator corresponding to  \eqref{24.02.16.E1}-\eqref{30.03.16.E3}.
\begin{defi}
A family $\{R(t);t\geq 0\}$ of bounded 
linear operators on $X$ is called a resolvent operator for  \eqref{24.02.16.E1}-\eqref{30.03.16.E3}; if the following conditions are satisfied:
\begin{itemize}
\item[a1] $R(0)=I,$ the identity operator of $X.$
\item[a2] For any $x \in X,$ the map $t \rightarrow R(t)x$ is continuous on $[0,\infty).$
\item[a3] For any $x \in Y,$ the map $t \rightarrow R(t)x$ belongs to $C([0,\infty),Y) \cap C^{1}([0,\infty),X)$ and verifies 
\begin{align*}
R'(t)x=A_1R(t)x+\int_{0}^{t}K(t-s)R(s)x\,ds.
\end{align*}
\item[a4] For any $x \in Y,$ the following equation holds on $[0,\infty):$
\begin{align*}
R'(t)x=R(t)A_1x+\int_{0}^{t}R(t-s)K(s)x\,ds.
\end{align*}
\end{itemize}
\end{defi}
The resolvent operator satisfies a number of properties reminiscent of a semigroup however it does not satisfy an evolution or semigroup property.\\
We see from the above definition that existence of resolvent operator of  \eqref{24.02.16.E1}-\eqref{30.03.16.E3} yields us a representation of
the solution of the equation  \eqref{24.02.16.E1}-\eqref{30.03.16.E3} as
\begin{align}\label{24.02.16.E2}
 x(t)=R(t)x_0+\int_{0}^{t}R(t-s)f(s)\,ds\quad\mbox{for all}\quad t\in[0,\infty).
\end{align}
Let us define, $g:[0,\infty)\times[0,\infty)\to X$ as:
\begin{align}\label{06.03.16.E1}
g(t,s)=\int_{0}^{t}K(s+t-u)x(u)\,du+f(t+s)
\end{align}
Let $D(\mathcal{A})=Y \times H^{1}([0,\infty),X)\subset X \times L^{2}([0,\infty),X).$
Hence,  \eqref{24.02.16.E1}-\eqref{30.03.16.E3} and $(\ref{06.03.16.E1})$ together suggest the pair
$(x,g)$ can be regarded as the solution of the system
\begin{align*}
(x',g')=\mathcal{A}(x,g),
\end{align*}
with the initial condition
\begin{align*}
(x(0),g(0))=(x^{0},f),
\end{align*}
where
\begin{align} \label{25.02.16.E10}
\mathcal{A}(x,g)=\left(A_1x+g(.,0),Kx+\frac{d}{ds}g\right)
\end{align}
\begin{thm}\label{24.02.16.T1}
Let $\mathcal{A}(x,g)$ be given by $(\ref{25.02.16.E10}).$ Assume that the map 
$t \rightarrow K(t)x$ is of bounded variation on $[0,\infty),$ for any $x \in Y.$ 
Then, under the assumptions $(i),(ii),(iii),(iv),$ 
there exists a resolvent operator for the equation  \eqref{24.02.16.E1}-\eqref{30.03.16.E3}.
\end{thm}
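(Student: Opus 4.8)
The plan is to realize the resolvent operator as a compression of a $C_{0}$-semigroup on the product (``history'') space $Z := X\times L^{2}([0,\infty),X)$, with $\mathcal{A}$ acting on $D(\mathcal{A})=Y\times H^{1}([0,\infty),X)$ as in \eqref{25.02.16.E10}. Writing $\iota\colon X\to Z$, $\iota x=(x,0)$, and $\pi\colon Z\to X$, $\pi(x,g)=x$, once we know that $(\mathcal{A},D(\mathcal{A}))$ generates a $C_{0}$-semigroup $\{\mathcal{S}(t)\}_{t\ge0}$ on $Z$ we \emph{define} $R(t):=\pi\,\mathcal{S}(t)\,\iota\in\mathcal{L}(X)$ and verify a1--a4. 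Thus the whole proof reduces to the generation statement for $\mathcal{A}$.

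First come the routine structural facts: $D(\mathcal{A})$ is dense in $Z$, and $\mathcal{A}$ is closed, using closedness of $A_{1}$ and of $d/ds$ on $H^{1}([0,\infty),X)$, boundedness of the trace $g\mapsto g(0)$ from $H^{1}([0,\infty),X)$ into $X$, and boundedness of $x\mapsto K(\cdot)x$ from $Y$ into $L^{2}([0,\infty),X)$ (the bounded variation hypothesis forces each $t\mapsto K(t)x$ to be bounded, while hypothesis (iv) gives integrability, so $K(\cdot)x\in L^{2}$; closed graph then yields boundedness). The concrete core is the resolvent computation. Solving $(\lambda-\mathcal{A})(x,g)=(u,v)$, the component equation $\lambda g-g'=K(\cdot)x+v$ is a linear ODE in $s$ whose only solution lying in $L^{2}$ is $g(s)=\int_{0}^{\infty}e^{-\lambda\sigma}[K(s+\sigma)x+v(s+\sigma)]\,d\sigma$; hence $g(0)=\widehat{K}(\lambda)x+\widehat{v}(\lambda)$ with $\widehat{K}(\lambda)=\int_{0}^{\infty}e^{-\lambda t}K(t)\,dt\in\mathcal{L}(Y,X)$, and the $x$-equation becomes $(\lambda-A_{1}-\widehat{K}(\lambda))x=u+\widehat{v}(\lambda)$. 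Since $A_{1}$ generates a $C_{0}$-semigroup, $(\lambda-A_{1})^{-1}$ is bounded $X\to Y$ uniformly on a right half-plane, while $\|\widehat{K}(\lambda)\|_{\mathcal{L}(Y,X)}\le\int_{0}^{\infty}e^{-(\operatorname{Re}\lambda)t}|K(t)|\,dt\to0$ by dominated convergence; so $I-\widehat{K}(\lambda)(\lambda-A_{1})^{-1}$ is invertible in $\mathcal{L}(X)$ for $\operatorname{Re}\lambda$ large, which gives $(\lambda-\mathcal{A})^{-1}\in\mathcal{L}(Z)$ there, with range in $D(\mathcal{A})$.

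To pass from ``$\rho(\mathcal{A})$ contains a right half-plane'' to ``$\mathcal{A}$ generates a $C_{0}$-semigroup'' one needs the iterated Hille--Yosida bounds $\|(\lambda-\mathcal{A})^{-k}\|\le M(\operatorname{Re}\lambda-\omega)^{-k}$; equivalently, I would split $\mathcal{A}=\mathcal{A}_{0}+\mathcal{B}$ with $\mathcal{A}_{0}(x,g)=(A_{1}x+g(0),\,g')$, the generator on $Z$ of the zero-kernel dynamics (the semigroup of $A_{1}$ coupled through the trace to the left-translation semigroup on $L^{2}$, a classical generation statement), and the Volterra perturbation $\mathcal{B}(x,g)=(0,\,K(\cdot)x)$, and show $\mathcal{B}$ is an admissible (Miyadera--Voigt / Desch--Schappacher type) perturbation of $\mathcal{A}_{0}$. \textbf{This is the main obstacle, and it is precisely where the bounded variation hypothesis is indispensable}: since $K(t)$ is only bounded $Y\to X$, the perturbation $\mathcal{B}$ effectively costs one derivative over $Z$, and mere integrability of $|K(\cdot)|$ (giving only $\widehat{K}(\lambda)=o(1)$) does not control the short-time convolution operator $f\mapsto\int_{0}^{t_{0}}\mathcal{S}_{0}(t_{0}-s)\mathcal{B}f(s)\,ds$; integrating by parts against the Stieltjes measure $dK$ furnished by the bounded variation assumption upgrades the estimate to $\widehat{K}(\lambda)=O(1/|\lambda|)$ uniformly and makes the perturbation admissible with small norm on short intervals. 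Granting this, $\mathcal{A}=\mathcal{A}_{0}+\mathcal{B}$ generates the desired $C_{0}$-semigroup $\{\mathcal{S}(t)\}$ on $Z$.

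It remains to check a1--a4 for $R(t)=\pi\,\mathcal{S}(t)\,\iota$. Property a1 is $\mathcal{S}(0)=I$, and a2 is strong continuity of $\mathcal{S}$. For a3, fix $x\in Y$, so $(x,0)\in D(\mathcal{A})$; then $t\mapsto\mathcal{S}(t)(x,0)=(R(t)x,\,g(t,\cdot))$ lies in $C^{1}([0,\infty),Z)\cap C([0,\infty),D(\mathcal{A}))$, and $\tfrac{d}{dt}\mathcal{S}(t)(x,0)=\mathcal{A}\mathcal{S}(t)(x,0)$ read through \eqref{25.02.16.E10} gives, in the first component, $R'(t)x=A_{1}R(t)x+g(t,0)$, and in the second the transport equation $\partial_{t}g-\partial_{s}g=K(\cdot)R(t)x$ with $g(0,\cdot)=0$, whose integration along characteristics yields $g(t,s)=\int_{0}^{t}K(s+t-\tau)R(\tau)x\,d\tau$; putting $s=0$ gives a3. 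Finally a4 follows from a3 together with the identity $\widehat{R}(\lambda)=(\lambda-A_{1}-\widehat{K}(\lambda))^{-1}$ read off from the resolvent computation above: Laplace-transforming the claimed relation $R'(t)x=R(t)A_{1}x+\int_{0}^{t}R(t-s)K(s)x\,ds$ produces exactly $\widehat{R}(\lambda)(\lambda-A_{1}-\widehat{K}(\lambda))x=x$ for $x\in Y$, which holds because $\widehat{R}(\lambda)$ maps $X$ into $D(A_{1})=Y$, and uniqueness of Laplace transforms closes the argument. The bookkeeping in this last step is routine; the genuine difficulty is the admissibility of the Volterra perturbation in the previous paragraph.
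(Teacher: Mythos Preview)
The paper does not give a proof of this theorem at all: immediately after the statement it simply writes ``For further details, see Page-242, Theorem 5.3.3 of Cordu \cite{Cordu}.'' In other words this is a quoted result from the literature (Corduneanu's book, ultimately going back to Desch--Schappacher and Grimmer), not something the authors prove themselves.

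Your sketch is precisely the classical route followed in those references: embed the Volterra problem into the product ``history'' space $Z=X\times L^{2}([0,\infty),X)$, show that the operator $\mathcal{A}$ of \eqref{25.02.16.E10} generates a $C_{0}$-semigroup on $Z$ (by writing $\mathcal{A}=\mathcal{A}_{0}+\mathcal{B}$ with $\mathcal{A}_{0}$ the ``zero-kernel'' generator and $\mathcal{B}$ the Volterra perturbation, then invoking a Miyadera--Voigt/Desch--Schappacher perturbation argument where the bounded variation hypothesis on $t\mapsto K(t)x$ provides the needed decay $\widehat{K}(\lambda)=O(|\lambda|^{-1})$), and finally recover $R(t)$ as the $(1,1)$-block $\pi\,\mathcal{S}(t)\,\iota$ and read off a1--a4 from the semigroup properties and the transport equation for the second component. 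You have correctly identified both the architecture and the one genuinely delicate point (admissibility of the unbounded perturbation $\mathcal{B}$), so there is nothing to compare your argument against in the paper itself --- your sketch \emph{is} the standard proof the paper is citing.
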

For further details, see  Page-242, Theorem 5.3.3 of Cordu \cite{Cordu}.
\begin{thm}\label{06.02.16.T1}
There is at most one resolvent operator for  \eqref{24.02.16.E1}-\eqref{30.03.16.E3}.
\end{thm}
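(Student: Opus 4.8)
The plan is to establish the classical convolution identity linking any two candidate resolvents and to deduce from it that their difference vanishes. Suppose $R(\cdot)$ and $S(\cdot)$ are both resolvent operators for \eqref{24.02.16.E1}--\eqref{30.03.16.E3}. Fix $t>0$ and $x\in Y$, and consider the $X$-valued function $\varphi(s):=R(t-s)S(s)x$ on $[0,t]$. By condition a3 for $S$ one has $S(s)x\in Y$ for every $s$, so condition a4 for $R$ (applied at the argument $t-s$ to the vector $S(s)x\in Y$) gives
\[
R'(t-s)S(s)x=R(t-s)A_1\big(S(s)x\big)+\int_0^{t-s}R(t-s-u)K(u)S(s)x\,du,
\]
while condition a3 for $S$ gives $S'(s)x=A_1\big(S(s)x\big)+\int_0^{s}K(s-u)S(u)x\,du$. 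A product-rule computation then shows $\varphi$ is differentiable on $[0,t]$ with
\[
\varphi'(s)=-\int_0^{t-s}R(t-s-u)K(u)S(s)x\,du+\int_0^{s}R(t-s)K(s-u)S(u)x\,du,
\]
the two occurrences of $R(t-s)A_1\big(S(s)x\big)$ cancelling exactly.

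Next I would integrate this identity over $s\in[0,t]$. Since $R(0)=S(0)=I$ by condition a1, the left-hand side yields $\varphi(t)-\varphi(0)=S(t)x-R(t)x$, whence
\[
S(t)x-R(t)x=-\int_0^t\!\!\int_0^{t-s}\!R(t-s-u)K(u)S(s)x\,du\,ds+\int_0^t\!\!\int_0^{s}\!R(t-s)K(s-u)S(u)x\,du\,ds.
\]
In the first double integral I substitute $v=s+u$ and apply Fubini's theorem; this brings it to the form $\int_0^t\!\int_0^{v}R(t-v)K(v-r)S(r)x\,dr\,dv$, which, after renaming variables, is precisely the second double integral. Hence the right-hand side vanishes, so $S(t)x=R(t)x$ for all $x\in Y$ and all $t\ge 0$. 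Because $A_1$ generates a semigroup, $Y=D(A_1)$ is dense in $X$, and each $R(t),S(t)\in\mathcal{L}(X)$; a density argument then yields $R(t)=S(t)$ on all of $X$, which is the assertion.

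The step needing genuine care — and which I regard as the only real obstacle — is the justification of the differentiation of $s\mapsto R(t-s)S(s)x$: the family $R(\cdot)$ is only strongly continuous and strongly $C^1$ on $Y$, not norm-differentiable, so the product rule must be derived from the strong regularity in a3 together with local boundedness of the difference quotients $\{(R(\tau+h)-R(\tau))/h\}$ in $\mathcal{L}(Y,X)$, which follows from the Banach--Steinhaus theorem; here it is essential that $s\mapsto S(s)x$ is continuous \emph{into} $Y$ (not merely into $X$), so that $R'(t-s)$ may legitimately be applied to it and the resulting map is continuous in $s$. One must also check joint Bochner measurability and integrability of the integrands before invoking Fubini, but this is immediate from hypotheses (ii)--(iv) on $K$ and the strong continuity of $R$ and $S$. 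With these points settled, the algebraic cancellation and the change of variables are routine.
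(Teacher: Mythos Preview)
The paper does not supply its own proof of this theorem; it simply cites Theorem~2 of Grimmer and Pr\"{u}ss \cite{GrimPru1}. Your argument is correct and is, in fact, essentially the classical proof given there: one forms the auxiliary function $\varphi(s)=R(t-s)S(s)x$, uses a4 for $R$ and a3 for $S$ to cancel the $A_1$-terms in $\varphi'(s)$, integrates, and shows the two remaining double integrals coincide after the substitution $v=s+u$ and Fubini. The density step at the end is exactly right.

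Your discussion of the only delicate point---justifying the product rule for $s\mapsto R(t-s)S(s)x$ when $R$ is merely strongly (not norm) differentiable---is apt and would satisfy a careful reader. One slight simplification: for the term $R(t-s-h)\,[S(s+h)x-S(s)x]/h$ it suffices to know that $\{R(\tau)\}$ is locally bounded in $\mathcal{L}(X)$ (immediate from strong continuity a2 and Banach--Steinhaus) and that $[S(s+h)x-S(s)x]/h\to S'(s)x$ in $X$; the $\mathcal{L}(Y,X)$-boundedness of the difference quotients of $R$ that you invoke is really only needed to handle the other cross term, and there it is indeed the right tool. Either way, the technical care you flag is precisely what is required, and the overall proof is sound.
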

For proof see Theorem 2 of Grimmer and Pr\"{u}ss \cite{GrimPru1}.
\begin{thm}\label{06.02.16.T2}
 Suppose $R(t)$ is resolvent operator for  \eqref{24.02.16.E1}-\eqref{30.03.16.E3}. Let $x_0\in D(A_1)$ and 
 $f\in C([0,\infty),X)\cap L^2([0,\infty),Y)$ or $f\in H^1(0,\infty,X).$ Then, $x(t)$ defined by $(\ref{24.02.16.E2})$ is the 
 solution of  \eqref{24.02.16.E1}-\eqref{30.03.16.E3}.
 
\end{thm}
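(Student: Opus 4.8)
The plan is to verify directly that the function $x$ defined by \eqref{24.02.16.E2} belongs to $C([0,\infty),Y)\cap C^{1}([0,\infty),X)$, satisfies $x(0)=x_0$, and solves \eqref{24.02.16.E1}; this is exactly what the definition of a solution requires. Split $x=x_{1}+x_{2}$, where $x_{1}(t)=R(t)x_{0}$ and $x_{2}(t)=\int_{0}^{t}R(t-s)f(s)\,ds$. Since $x_{0}\in D(A_{1})=Y$, property (a3) of the resolvent operator gives at once $x_{1}\in C([0,\infty),Y)\cap C^{1}([0,\infty),X)$, $x_{1}(0)=R(0)x_{0}=x_{0}$ by (a1), and
\begin{align*}
x_{1}'(t)=A_{1}R(t)x_{0}+\int_{0}^{t}K(t-s)R(s)x_{0}\,ds=A_{1}x_{1}(t)+\int_{0}^{t}K(t-s)x_{1}(s)\,ds,
\end{align*}
so $x_{1}$ already solves the homogeneous version of \eqref{24.02.16.E1}. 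It therefore suffices to show that $x_{2}$ has the same regularity, that $x_{2}(0)=0$, and that $x_{2}'(t)=f(t)+A_{1}x_{2}(t)+\int_{0}^{t}K(t-s)x_{2}(s)\,ds$.

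Consider first the hypothesis $f\in C([0,\infty),X)\cap L^{2}([0,\infty),Y)$, so that $f(s)\in Y$ for a.e.\ $s$ and $t\mapsto R(t-s)f(s)$ is, by (a3), of class $C^{1}$ in $t$. Differentiating $x_{2}$ by Leibniz' rule and using $R(0)=I$ yields
\begin{align*}
x_{2}'(t)=f(t)+\int_{0}^{t}\Big(A_{1}R(t-s)f(s)+\int_{0}^{t-s}K(t-s-\tau)R(\tau)f(s)\,d\tau\Big)\,ds .
\end{align*}
I would then pull the closed operator $A_{1}$ out of the Bochner integral — legitimate once one checks, via (a3) and the bound $|K(\cdot)|\in L^{1}$, that $s\mapsto A_{1}R(t-s)f(s)$ is integrable on $[0,t]$ — obtaining $\int_{0}^{t}A_{1}R(t-s)f(s)\,ds=A_{1}x_{2}(t)$; in particular $x_{2}(t)\in D(A_{1})$, and since then $A_{1}x_{2}(t)=x_{2}'(t)-f(t)-\int_{0}^{t}K(t-s)x_{2}(s)\,ds$ is continuous, $x_{2}$ has the required regularity. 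For the remaining double integral, Fubini's theorem and the change of variables $r=s+\tau$ give
\begin{align*}
\int_{0}^{t}\int_{0}^{t-s}K(t-s-\tau)R(\tau)f(s)\,d\tau\,ds=\int_{0}^{t}K(t-r)\Big(\int_{0}^{r}R(r-s)f(s)\,ds\Big)\,dr=\int_{0}^{t}K(t-r)x_{2}(r)\,dr,
\end{align*}
which is precisely the memory term. Together with $x_{2}(0)=0$, this proves the claim in this case.

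For the alternative hypothesis $f\in H^{1}(0,\infty;X)$ the same conclusion is reached from the equivalent representation $x_{2}(t)=\int_{0}^{t}R(\sigma)f(t-\sigma)\,d\sigma$: differentiating under the integral sign gives $x_{2}'(t)=R(t)f(0)+\int_{0}^{t}R(t-s)f'(s)\,ds$, and, writing $f(t)=f(0)+\int_{0}^{t}f'(\sigma)\,d\sigma$ and interchanging integrals, one is reduced to the "integrated resolvent" $S(t)=\int_{0}^{t}R(\sigma)\,d\sigma$, whose action on $Y$ is controlled by property (a4); the resulting identity again matches $f(t)+A_{1}x_{2}(t)+\int_{0}^{t}K(t-s)x_{2}(s)\,ds$. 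Adding $x_{1}$ and $x_{2}$ shows that $x$ defined by \eqref{24.02.16.E2} solves \eqref{24.02.16.E1}--\eqref{30.03.16.E3}.

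I expect the main obstacle to be the rigorous justification of (i) differentiating the convolution $x_{2}$ under the integral sign and (ii) commuting the unbounded operator $A_{1}$ with that integral, both uniformly on compact time intervals. This is exactly where the structural hypotheses enter: $|K(\cdot)|\in L^{1}$ and the bounded-variation assumption on $t\mapsto K(t)x$ (which underlie the good behaviour of $R$), together with the fact that either $f$ is $Y$-valued in an $L^{2}$ sense or $f$ is differentiable in $X$, so that in each case the integrand defining $x_{2}$ lands in $D(A_{1})$ with locally integrable $A_{1}$-norm. The two forcing classes force these slightly different routes, but once the interchanges are justified the remaining steps are just Fubini's theorem, i.e.\ associativity of the convolution product.
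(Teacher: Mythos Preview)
The paper does not actually prove this theorem: immediately after stating it the authors write ``For further details, see Theorem~3 of Grimmer and Pr\"{u}ss \cite{GrimPru1}.'' So there is no in-house argument to compare against; the result is quoted from the literature.

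Your sketch is the standard route taken in the cited source: split $x=R(t)x_{0}+\int_{0}^{t}R(t-s)f(s)\,ds$, handle the homogeneous part via (a3), and for the forced part distinguish the two regularity classes for $f$. The Leibniz/Fubini computation in the $Y$-valued case and the change-of-variables plus integrated-resolvent trick in the $H^{1}$ case are exactly what Grimmer--Pr\"{u}ss do. You have also correctly flagged the only genuinely delicate steps, namely differentiating the convolution under the integral sign and commuting the closed operator $A_{1}$ with the Bochner integral; these are precisely where the hypotheses $f\in L^{2}(Y)$ or $f\in H^{1}(X)$, together with the resolvent bounds, are consumed. In short, your proposal is correct and matches the argument the paper defers to.
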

For further details, see Theorem 3 of Grimmer and Pr\"{u}ss  \cite{GrimPru1}.\\

\textbf{Notation}: Throughout this paper $C$ represents a generic constant.

\section{Spectral analysis and representation of solution for abstract PIDE}
In the first part of this section, we discussed representation of solution and in the next part, we explained the decomposition of the complexified Hilbert space into direct sum of two invariant subspaces by spectral analysis.
\subsection{Existence and regularity of solution:} We primarily focus on the existence, uniqueness and regularity of the solution $\textbf{y}(t),$ after proving existence of the resolvent operator $R(t)$. \\
Let us assume $u\in H^1([0,\infty), U).$ As $B\in\mathcal{L}(U,H),$ we note that $Bu\in H^1([0,\infty), H).$
Taking $A_1=-A,\,\,K(t)=-e^{-\delta t}A,$ and $f(t)=Bu(t),$ we see that all the hypothesis $(i),(ii),(iii),(iv)$ are satisfied. Hence, using Theorem \ref{24.02.16.T1}, Theorem \ref{06.02.16.T1} and Theorem \ref{06.02.16.T2}, we have unique resolvent operator
$R(t):=e^{-t\mathscr{A}(t)}$ and a unique solution 
$\textbf{y}\in C^1([0,\infty),H)$ to the system \eqref{PIDE2}-\eqref{PIDE4} as
\begin{align}\label{06.02.16.E3}
 \textbf{y}(t)=R(t)\textbf{y}_0+\int_{0}^{t}R(t-s)Bu(s)\,ds\quad\mbox{for all}\quad t\in[0,\infty).
\end{align}
Therefore, we have
\begin{align} \label{10.02.16.E4}
\frac{d}{dt}(\textbf{y}(t),\phi)+(\textbf{y},A \phi)+\int_{0}^{t}\beta(t-s)(\textbf{y},A \phi)\,ds=(Bu(t),\phi)
\,\,\mbox{for all}\,\, \phi \in D(A).
\end{align} 
As $\textbf{y}\in L^{2}(0,\infty;H),$ we can write for all $t\geqslant0$
\begin{align} \label{10.02.16.E5}
\textbf{y}(t)=\sum_{n=1}^{\infty}\alpha_{n}(t)\phi_{n}.
\end{align}
where  $\{\phi_{n}\}_{n=1}^{\infty}$ are the eigenfunctions of $A$  and
$\alpha_{n}(t)=(\textbf{y}(t),\phi_{n})$, for all $n\in\mathbb{N}.$
Substituting $\phi=\phi_{n}$ in $(\ref{10.02.16.E4}),$ it yields
\begin{align*}
\alpha_{n}^{'}(t)+\lambda_{n}\alpha_{n}(t)+\lambda_{n}\int_{0}^{t}\beta(t-s)\alpha_{n}(s)\,ds=u_{n}(t) \\
\alpha_{n}(0)=(y_{0},\phi_{n})
\end{align*}
where
\begin{align*}
 u_{n}(t)=(Bu(t),\phi_{n}). 
\end{align*}
Differentiating once more gives,
\begin{align} \label{10.02.16.E7}
\alpha_{n}^{''}(t)+(\lambda_{n}+\delta)\alpha_{n}^{'}(t)+\lambda_{n}(1+\delta)\alpha_{n}(t)=u^{'}_{n}(t)+\delta u_{n}(t).
\end{align}
The corresponding characteristic equation is for equation \eqref{10.02.16.E7} is
\begin{align*}
r^{2}+(\lambda_{n}+\delta)r+\lambda_{n}(1+\delta)=0.
\end{align*}
On solving this, we have two roots $-\mu_{n}^{\pm}$ where,
\begin{align*}
 \mu_n^{\pm}=\frac{1}{2}\left(\lambda_{n}+\delta\pm\sqrt{(\lambda_{n}-\delta)^2-4\lambda_{n}}\right).
\end{align*}
 Hence, solution of $(\ref{10.02.16.E7})$ is given as:
\begin{align}\label{24.02.16.E3}
\alpha_{n}(t)=\frac{1}{(\mu_{n}^+-\mu_n^-)}\left[\left(\mu_n^+\alpha_{n}(0)+\alpha'_{n}(0)\right)e^{-\mu_n^-t}
-\left(\mu_n^-\alpha_{n}(0)+\alpha'_{n}(0)\right)e^{-\mu_{n}^{+}t}\right]\notag\\
+\frac{1}{(\mu_{n}^+-\mu_n^-)}\left[\int_{0}^{t}\left(e^{-\mu_n^-(t-s)}-e^{-\mu_n^+(t-s)}\right)g_{n}(s)ds \right].
\end{align}
where 
\begin{align*}
\alpha_{n}(0)=(\textbf{y}_{0},\phi_{n}),\quad \alpha'_{n}(0)=(Bu(0),\phi_{n})-\lambda_{n}(\textbf{y}_{0},\phi_{n}),\\
\quad\mbox{and}\quad
g_{n}(s)=(Bu'(s),\phi_{n})+ \delta (Bu(s),\phi_{n}) .
\end{align*}
This substituted back in $(\ref{10.02.16.E5}),$ will give the solution $\textbf{y}(t)$ of \eqref{PIDE2}-\eqref{PIDE4}.\\
Now for our convenience, we define the operator 
$$\mathscr{A}:L^2(0,\infty;H)\to L^2(0,\infty;H)$$ by
\begin{align*}
\mathscr{A}(t)\textbf{y}(t)=A \textbf{y}(t)+\int_{0}^{t}\beta(t-s)A\textbf{y}(s)ds\quad\mbox{for all}\quad t> 0.
\end{align*}
\subsection{Spectral analysis in complexified Hilbert space}
Assuming that $\lambda_{n}\to\infty,\,\mu_n^{\pm}$ are all real for large $n.$ 
Taking into account that some of the $\mu_{n}^{\pm}$ might be complex, it is convenient to view $\mathscr{A}$
in the complexified Hilbert space $\tilde{H}=H+iH.$ We denote by $<.,.>$ the scalar product of $\tilde{H}$
and $|.|_{\tilde{H}}$ by it's norm. After simple computation we note that 
\begin{align} \label{10.02.16.E9}
\mu_{n}^{+}\to \infty \quad\mbox{and}\quad \mu_{n}^{-} \to \omega_{0}=\delta+1 \quad\mbox{as}\quad n \to \infty.
\end{align}
\begin{rem}
$\{\mu_{k}^{+}\}_{k=1}^{\infty}$ and $\{\mu_{k}^{-}\}_{k=1}^{\infty}$ are real except possibly for
finitely many complex values. We have the following cases:\\
Case 1. For some specific values of the constant $\delta,$ we can have 
\begin{align*}
\mu_{j}^{+}=\mu_{j}^{-} 
\end{align*}
for some $j.$ This case can occur only for $\delta=\lambda_j\pm2\sqrt{\lambda_j}.$\\
Case 2. It is possible that
\begin{align*}
\mu_{j}^{+}=\mu_{m}^{+}, \quad \mbox{or} \quad \mu_{j}^{-}=\mu_{m}^{-}
\end{align*}
for some $j,m \in \mathbb{N}.$ This can happen if and only if $\lambda_{j}=\lambda_{m},$ 
for some $j,m \in \mathbb{N}.$ In this case, for each fixed $j,$ the multiplicity of $\mu_{j}^{\pm}$ 
is finite and coincides with the multiplicity of $\lambda_{j}.$\\
Case 3. 
It is possible that
\begin{align*}
\mu_{j}^{+}=\mu_{m}^{-}, \quad \mbox{for some}\quad j,m\in\mathbb{N},\,\,j\neq m.
\end{align*}
\end{rem}
For simplicity of exposition, throughout this paper we assume $\delta\neq\lambda_j\pm2\sqrt{\lambda_j}$ and Case 3 does not occur.\\
Let $\gamma$ be such that $0 <\gamma < \omega_{0}.$
Define,
\begin{align} \label{02.03.16.E1}
N_{1}=\sup\{j:\mbox{Re }\mu_{j}^{+} \leq \gamma\},&\quad
N_{2}=\sup\{k:\mbox{Re }\mu_{k}^{-} \leq \gamma\},\notag\\
N&=\max\{N_{1},N_{2}\}
\end{align}
Now using the properties of $A$ and our choice of $\gamma,$ it follows that $N < \infty.$ In the set $\{\lambda_{j}\}_{j=1}^N,$ 
let there be $l$ distinct eigen values with multiplicities $\{m_{k}\}_{k=1}^l$. Also note that $m_1+m_2+\cdots+m_l=N.$
Let
\begin{align} \label{03.03.16.E1}
M=\max\{m_{k}:k=1,...,l\}
\end{align}
We define the linear space $\mathrm{X}_{u}, \mathrm{X}_{s}$ 
\begin{align*}
\mathrm{X}_{u}=\mbox{lin\, span} \{\phi_{j}\}_{j=1}^{N},\quad\mbox{and}\quad
\mathrm{X}_{s}=\mbox{lin\, span} \{\phi_{j}\}_{j=N+1}^{\infty}.
\end{align*}
Let $P_u:\tilde{H} \rightarrow \mathrm{X}_{u}$ be the orthogonal projection
of $\tilde{H}$ onto $\mathrm{X}_{u}.$ Also it is easy to observe that $\mathrm{X}_{u}$ and $\mathrm{X}_{s}$ are $\mathscr{A}(t)$ invariant
subspaces of $H.$

\section{Internal stabilization using finite dimensional control}
In this section we establish the exponential stability of the system \eqref{PIDE1}-\eqref{PIDE12} with a given decay rate
$\gamma$ for $0<\gamma<\omega_0,$  using finite dimensional controller. More precisely,
we decouple the given system into a finite dimensional unstable system  which is null controllable 
and an infinite dimensional $\gamma$-stable part  which is exponentially stable.  
\\Let us denote
$$\mathscr{A}_u=\mathscr{A}|_{L^2(0,\infty;\mathrm{X}_{u})}
\quad\mbox{and}\quad \mathscr{A}_s=\mathscr{A}|_{L^2(0,\infty;\mathrm{X}_{s})}.$$
Let us consider the matrices $$A_{N}=\mbox{diag}\{\lambda_{j}+\delta\}_{j=1}^{N},$$
$$B_{N}=\mbox{diag}\{\lambda_{j}(1+\delta)\}_{j=1}^{N},$$
$$C_{NM}=\{(B\phi_{i}^{*},\phi_{j}^{*})\}_{j=1,i=1}^{N,M},$$
\begin{align*} 
 P_{2N}=
 \begin{pmatrix}
  0_{N}& I_{N} \\
  -B_{N}& -A_{N}
 \end{pmatrix},
 \quad
 Q_{2NM}=
 \begin{pmatrix}
  0_{N}\\
  C_{NM}
 \end{pmatrix}.
\end{align*}
Now by simple computation it is clear that there are $2N$ number of eigenvalues of $P_{2N}$ and they are 
$-\mu_{j}^{+},-\mu_{j}^{-}$ for $j=1,\cdots,N.$ We see from the expression of $\mu_j^\pm$ that for each $\lambda_{j},$ 
there are two eigenvalues $-\mu_j^+$ and $-\mu_j^-$
respectively. Therefore, there are two sets, each of which has $l$ distinct eigenvalues with multiplicities $\{m_{k}\}_{k=1}^l.$
$$\mbox{As}\quad \lambda_{1}=\cdots=\lambda_{m_{1}},\quad\mbox{then}\quad\mu_{1}^+=\cdots=\mu_{m_1}^+,\quad\mbox{and} 
\quad\mu_{1}^-=\cdots=\mu_{m_{1}}^-.$$
Similarly,
$$\mbox{as}\quad \lambda_{m_1+1}=\cdots=\lambda_{m_1+m_{2}},\quad\mbox{then}\quad\mu_{m_1+1}^+=\cdots=\mu_{m_1+m_2}^+,$$
$$\quad\mbox{and} \quad\mu_{m_1+1}^-=\cdots=\mu_{m_1+m_{2}}^-,\cdots,\mbox{as}\quad \lambda_{m_1+\cdots+m_{l-1}+1}=\cdots$$ $$=\lambda_{m_1+\cdots+m_{l}},\quad\mbox{then}
\quad\mu_{m_1+\cdots+m_{l-1}+1}^+=\cdots=\mu_{m_1+\cdots+m_l}^+,$$
$$\quad\mbox{and}\quad\mu_{m_1+\cdots+m_{l-1}+1}^-=\cdots=\mu_{m_1+\cdots+m_{l}}^-.$$ Let us first consider
that the eigenvalues $\{-\mu_{j}^\pm\}_{j=1}^{N}$ are
semisimple. By the theory of Jordan canonical form, there exists an invertible $2N\times2N$ matrix $R_{2N}$
such that 
\begin{align*}
 P_{2N}=R_{2N}^{-1}D_{2N}R_{2N},
\end{align*}
where
\begin{align*}
 D_{2N}=
 \begin{pmatrix}
  M_{N}^+ & 0_{N}\\
  0_{N} & M_{N}^-
 \end{pmatrix},
 \quad
 M_{N}^\pm=\mbox{diag}\{-\mu_{j}^\pm\}_{j=1}^{N}.
\end{align*}
Let $\overline{Q}_{2NM}=R_{2N}Q_{2NM}.$ As $R_{2N}$ is invertible, we note that Rank$(\overline{Q}_{2NM}) = $ Rank$(Q_{2NM}) = M.$
Let $\overline{Q}_{k}$ for $k=1,\cdots,2l$ be the matrices
\begin{align*}
 &\overline{Q}_{1}=\left\{\left(\overline{Q}_{2NM}^*\right)_{ij}\right\}_{i=1,j=1}^{M,m_{1}},\,
 \overline{Q}_{2}=\left\{\left(\overline{Q}_{2NM}^*\right)_{ij}\right\}_{i=1,j=m_{1}+1}^{M,m_1+m_{2}},\\
 &\cdots, 
 \overline{Q}_{l}=\left\{\left(\overline{Q}_{2NM}^*\right)_{ij}\right\}_{i=1,j=m_1+\cdots+m_{l-1}+1}^{M,m_1 +\cdots+m_{l-1}+m_{l}},\\
&\overline{Q}_{l+1}=\left\{\left(\overline{Q}_{2NM}^*\right)_{ij}\right\}_{i=1,j=N+1}^{M,N+m_{1}},\,
 \overline{Q}_{l+2}=\left\{\left(\overline{Q}_{2NM}^*\right)_{ij}\right\}_{i=1,j=N+m_{1}+1}^{M,N+m_1+m_{2}},\\
 &\cdots,\overline{Q}_{2l}=\left\{\left(\overline{Q}_{2NM}^*\right)_{ij}\right\}_{i=1,j
 =N+m_1 +\cdots+m_{l-1}+1}^{M,N+m_1 +\cdots+m_{l-1}+m_{l}}.
\end{align*}
To begin with, we consider the case of semisimple eigenvalues. In later stage we have proved that this assumption is not essential. However this assumption helps us in the step of designing the stabilizing control. Due to this assumption the unstable part of the system reduces to a diagonal finite dimensional differential system. 
We now announce main theorem of this section which deals with existence of finite dimensional controller which stabilizes system\eqref{PIDE2}-\eqref{PIDE4}.
\begin{thm}\label{10.02.16.T2}
Let us assume that 
\begin{align}\label{14.02.16.E11}
\mbox{rank }\overline{Q}_{k}=\mbox{rank }\overline{Q}_{k+l}=m_{k}\quad\mbox{for all}\quad k=1,\cdots,l
\end{align}
and the eigenvalues $\{-\mu_{j}^\pm\}_{j=1}^{N}$ of $P_{2N}$ are semisimple.
Then there is a controller $u$ of the form
\begin{align} \label{PIDE22}
u(t)=\sum_{i=1}^{M}\Phi_{i}v_{i}(t), \quad \forall t\geqslant 0
\end{align}
which stabilizes exponentially system \eqref{PIDE2}-\eqref{PIDE4}.
In other words, the solution $\mathbf{y} \in C^1([0,\infty);\tilde{H})$ of the system \eqref{PIDE2}-\eqref{PIDE4} with control given by $(\ref{PIDE22})$ satisfies,
\begin{align*}
|\mathbf{y}(t)|_{\tilde{H}} \leq C \e^{-\gamma t}|\mathbf{y}_{0}|_{\tilde{H}}, \quad t \geqslant 0.
\end{align*}
Moreover, for any $T>0,$ the controller $v=\{v_{i}\}_{i=1}^{M}$ can be chosen in  $C([0,T),\mathbb{C}^{M})$ such that 
\begin{align*}
\quad v_{i}(t)=-\int_{t}^{T}\beta(t-s)w_{i}(s)\,ds,\,\,
\int_{0}^{T}|w_{i}(t)|^{2}_{M}dt \,&\leq C|\mathbf{y}_{0}|_{\tilde{H}}^{2},\,\,v_{i}(t)=0=w_i(t) 
\end{align*}
$\,\mbox{for}\quad t \geqslant T,$ and $\{\Phi_{i}\}_{i=1}^{M} \,\subset\, D(A)$ is a system of
eigenfunctions. The exact form $\{\Phi_{i}\}_{i=1}^{M} $ is made precise in the proof below. In addition, the controller $v=\{v_i\}_{i=1}^{M}$
 can also be found as a $C^2([0,\infty), \mathbb{C}^M)$ function such that 
 \begin{align*} 
v_{j} &\in C^{2}[0,\infty),\\
|v_{j}(t)|+|v_{j}^{'}(t)|+|v_{j}^{''}(t)| &\leq C e^{-\gamma t}|\mathbf{y}_{0}|_{\tilde{H}}\quad \forall\, t > 0;\, j=1,...,M.
\end{align*}
\end{thm}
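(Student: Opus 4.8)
\emph{Proof proposal.} The strategy is the classical spectral splitting of $\mathbf{y}$ along $\tilde H=\mathrm{X}_u\oplus\mathrm{X}_s$: steer the finite--dimensional unstable component $\mathbf{y}_u=P_u\mathbf{y}$ exactly to rest on $[T,\infty)$ while keeping it bounded on $[0,T]$, and let the infinite--dimensional component $\mathbf{y}_s=(I-P_u)\mathbf{y}$ decay by itself. Since $\mathrm{X}_u,\mathrm{X}_s$ are $\mathscr{A}(t)$--invariant, $\mathbf{y}=\mathbf{y}_u+\mathbf{y}_s$ with each piece solving the projected equation. For $n\le N$ the coefficient $\alpha_n=(\mathbf{y},\phi_n)$ solves the scalar Volterra equation of Section~3, which — on all of $[0,\infty)$ — is equivalent to the second--order ODE \eqref{10.02.16.E7} together with $\alpha_n(0)=(\mathbf{y}_0,\phi_n)$ and $\alpha_n'(0)=(Bu(0),\phi_n)-\lambda_n(\mathbf{y}_0,\phi_n)$; the converse implication holds because the defect $E_n:=\alpha_n'+\lambda_n\alpha_n+\lambda_n\!\int_0^t\!\beta(t-s)\alpha_n(s)\,ds-u_n$ satisfies $E_n'+\delta E_n=0$, $E_n(0)=0$. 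Inserting the control \eqref{PIDE22} with $\Phi_i=\phi_i^{*}$, setting $z:=(\alpha_1,\dots,\alpha_N,\alpha_1',\dots,\alpha_N')\in\mathbb{C}^{2N}$ and $w:=v'+\delta v$ componentwise, the unstable modes obey
\begin{align*}
z'(t)=P_{2N}z(t)+Q_{2NM}w(t),\qquad v_i(t)=-\int_t^T\beta(t-s)w_i(s)\,ds\ \Longleftrightarrow\ v_i'+\delta v_i=w_i,\ v_i(T)=0.
\end{align*}
The key structural point is that, because \eqref{10.02.16.E7} is a genuine identity for all $t\ge0$, once the control is switched off on $[T,\infty)$ the $\alpha_n$ ($n\le N$) solve a homogeneous second--order ODE there and hence vanish identically on $[T,\infty)$ \emph{provided} we steer the full state to $z(T)=0$ (both $\alpha_n(T)$ and $\alpha_n'(T)$); steering only $\alpha_n(T)\to0$ would leave the memory term alive.

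Next I would prove controllability of $(P_{2N},Q_{2NM})$. Conjugating by $R_{2N}$ gives $\zeta'=D_{2N}\zeta+\overline Q_{2NM}w$ with $D_{2N}$ block--diagonal; under the standing hypotheses (Cases~1 and 3 excluded, semisimple eigenvalues) $D_{2N}$ has exactly $2l$ distinct eigenvalues, two copies of $\{-\mu^{\pm}\}$ each of multiplicity $m_k$, and the Hautus criterion at a semisimple eigenvalue says that controllability on the $k$--th group is equivalent to the rows of $\overline Q_{2NM}$ indexed by that group having rank $m_k$, i.e.\ to $\mathrm{rank}\,\overline Q_k=m_k$, resp.\ $\mathrm{rank}\,\overline Q_{k+l}=m_k$. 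Thus hypothesis \eqref{14.02.16.E11} \emph{is} the controllability of $(P_{2N},Q_{2NM})$. A direct computation from the formula for $\mu_n^{\pm}$ also shows $\mu_n^{\pm}\ne\delta$ for every $n$, i.e.\ $-\delta\notin\sigma(P_{2N})$; this is needed below.

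To build the control I additionally require $v(0)=0$, i.e.\ $\int_0^T\beta(-s)w_i(s)\,ds=0$ for every $i$; then $u(0)=0$, the initial state $z(0)=z^0[\mathbf{y}_0]:=\big((\mathbf{y}_0,\phi_n)_n,(-\lambda_n(\mathbf{y}_0,\phi_n))_n\big)$ depends linearly on $\mathbf{y}_0$ with $|z^0[\mathbf{y}_0]|\le C|\mathbf{y}_0|$, and $z(T)=0$ reads $\int_0^T e^{P_{2N}(T-s)}Q_{2NM}w(s)\,ds=-e^{P_{2N}T}z^0[\mathbf{y}_0]$. This is solvable for $w$ \emph{inside} the subspace $\{w:\int_0^T\beta(-s)w_i(s)\,ds=0\}$: a functional $w\mapsto\langle\eta,\int_0^T e^{P_{2N}(T-s)}Q_{2NM}w\,ds\rangle$ vanishing there forces $Q_{2NM}^{*}e^{P_{2N}^{*}(T-\cdot)}\eta$ to lie in $\mathrm{span}\{\beta(-\,\cdot)e_i\}$, which (expand $\eta$ over eigenvectors of $P_{2N}^{*}$ and use $-\delta\notin\sigma(P_{2N})$) forces $Q_{2NM}^{*}\eta=0$ on each eigenspace, hence $\eta=0$ by controllability; so the map is onto $\mathbb{C}^{2N}$ on that subspace. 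Picking a right inverse valued in a fixed finite--dimensional space of admissible controls makes $w$ linear in $\mathbf{y}_0$; taking the generators in $C_c^{\infty}\big((0,T);\mathbb{C}^M\big)$ yields $w\in C_c^{\infty}$ with $\|w\|_{C^2}+\|w\|_{L^2}\le C|\mathbf{y}_0|$, so that $v_i(t)=-\int_t^T\beta(t-s)w_i(s)\,ds$ is $C^{\infty}$, vanishes on a left neighbourhood of $T$ and on $[T,\infty)$, satisfies $v_i(0)=0$, and $|v_i|+|v_i'|+|v_i''|\le C|\mathbf{y}_0|\le Ce^{-\gamma t}|\mathbf{y}_0|$ on $[0,T]$; keeping instead $w\in L^2(0,T;\mathbb{C}^M)$ gives the $C([0,T);\mathbb{C}^M)$ version with $\int_0^T|w_i|^2\le C|\mathbf{y}_0|^2$. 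Finally I verify the decay for $u=\sum_i\Phi_i v_i$: for $n\le N$, $\alpha_n$ solves \eqref{10.02.16.E7} with data $z^0[\mathbf{y}_0]$ and $z(T)=0$, so $\alpha_n\equiv0$ on $[T,\infty)$ while $|\alpha_n|\le C|\mathbf{y}_0|$ on $[0,T]$, giving $|\mathbf{y}_u(t)|_{\tilde H}\le Ce^{-\gamma t}|\mathbf{y}_0|_{\tilde H}$; for $\mathbf{y}_s$, by \eqref{10.02.16.E9}--\eqref{02.03.16.E1} the resolvent operator $R(t)$ restricted to $\mathrm{X}_s$ decays at some rate $\gamma_1\in(\gamma,\omega_0)$ (from the modal representation \eqref{24.02.16.E3}, with the coefficients $(\mu_n^{\pm}-\lambda_n)/(\mu_n^{+}-\mu_n^{-})$ uniformly bounded for $n>N$ since Case~1 is excluded and $\mu_n^{+}-\mu_n^{-}\to\infty$), while the forcing $P_sBu$ decays at rate $\gamma$; Duhamel in \eqref{06.02.16.E3} together with $\gamma<\gamma_1$ gives $|\mathbf{y}_s(t)|_{\tilde H}\le Ce^{-\gamma t}|\mathbf{y}_0|_{\tilde H}$. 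Adding the two and noting $u\in H^1$ yields $\mathbf{y}\in C^1([0,\infty);\tilde H)$ with the claimed estimate. The non--semisimple case differs only in that $P_{2N}$ is reduced to Jordan (not diagonal) form, the Hautus condition is replaced by its block version, and the null--control step becomes a moment problem handled by the same density argument.

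\textbf{The main obstacle} is not the finite--dimensional controllability (which, by the first paragraph, is literally hypothesis \eqref{14.02.16.E11}) but reconciling two features of the memory: one must steer the \emph{pair} $(\alpha_n(T),\alpha_n'(T))$ to zero, yet the Cauchy datum $\alpha_n'(0)$ is itself polluted by $u(0)$. Imposing $v(0)=0$ removes the pollution, but then one must check that these extra $M$ linear constraints on $w$ do not spoil surjectivity of the control--to--state map — which is exactly where $-\delta\notin\sigma(P_{2N})$ and the controllability hypothesis are used together. The remaining bookkeeping (uniform boundedness of the modal coefficients for $n>N$, the convolution estimate for $\mathbf{y}_s$) is routine.
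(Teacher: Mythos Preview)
Your argument follows essentially the paper's route: spectral splitting $\mathbf y=\mathbf y_u+\mathbf y_s$, reduction of the unstable modes to the $2N$--dimensional first--order system $z'=P_{2N}z+Q_{2NM}w$ with $w=v'+\delta v$, null controllability of $(P_{2N},Q_{2NM})$ via the rank hypothesis, recovery of $v$ from $w$ through $v_i(t)=-\int_t^T\beta(t-s)w_i(s)\,ds$, and decay of $\mathbf y_s$ from the resolvent estimate on $\mathrm X_s$. Where the paper invokes the Kalman criterion by showing directly that $\overline Q_{2NM}^{\,*}e^{tD_{2N}}$ is injective (grouping the exponentials by distinct $\mu^{\pm}$ and using linear independence), you use the equivalent Hautus formulation; this is the same content.

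The one substantive difference is your treatment of the ``pollution'' of $\alpha_n'(0)$ by $u(0)$. The paper simply applies Kalman controllability and then sets $v_i(0)=-\int_0^T e^{\delta s}w_i(s)\,ds$ a posteriori, without commenting on the fact that this makes the initial state $X_{2N}(0)$ depend on the control it is about to construct. You instead impose $v(0)=0$ as an extra linear constraint on $w$, observe that $-\delta\notin\sigma(P_{2N})$ (since $\mu_j^{\pm}=\delta$ would force $\lambda_j=0$, contradicting positive definiteness of $A$), and check that this constraint does not destroy surjectivity of the control--to--state map. This is a genuine refinement: it closes a point the paper leaves implicit, at the cost of a short additional density/independence argument. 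Both approaches reach the same conclusion, and the remaining estimates (uniform boundedness of the modal coefficients for $n>N$, the Duhamel bound for $\mathbf y_s$, the $C^2$ bound on $v$ obtained by choosing $w$ in a fixed finite--dimensional subspace of smooth functions) are handled correctly in your sketch.
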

\begin{proof}
We represent the solution $\textbf{y}$ to system \eqref{PIDE2}-\eqref{PIDE4} as $\textbf{y}=\textbf{y}_{u}+\textbf{y}_{s},$ where $\textbf{y}_{u} \in \mathrm{X}_{u}$ and $\textbf{y}_{s} \in \mathrm{X}_{s}.$
We can choose a biorthogonal system
$\{\phi_{j}\}_{j=1}^{N}$ and $\{\phi_{j}^{*}\}_{j=1}^{N},$ that is,
\begin{align} \label{12.02.16.E2}
(\phi_{i},\phi_{j}^{*})=\delta_{ij},\, i,j=1,...,N.
\end{align}
For $i=1,...,N, $ we substitute $\Phi_{i}=\phi_{i}^{*}$ in $(\ref{PIDE22}).$
Let us first decouple the system  into a finite dimensional part 
\begin{align} \label{12.02.16.E3}
\frac{d\textbf{y}_{u}}{dt}+ A \textbf{y}_{u}+\int_{0}^{t}\beta(t-s)A\textbf{y}_{u}(s)
 \,ds&=P_u\sum_{i=1}^{M}B\phi^{*}_{i}v_{i}\\\label{28.03.16.E8}
 \textbf{y}_{u}(0)=P_u\textbf{y}_{0}
\end{align}
and an infinite dimensional $\gamma$-stable part,
\begin{align} \label{12.02.16.E4}
\frac{d\textbf{y}_{s}}{dt}+ A \textbf{y}_{s}+\int_{0}^{t}\beta(t-s)A\textbf{y}_{s}(s)
 \,ds&=(I-P_u)\sum_{i=1}^{M}B\phi^{*}_{i}v_{i}\\\label{28.03.16.E9}
 \textbf{y}_{u}(0)=(I-P_u)\textbf{y}_{0}.
\end{align}
In $\mathrm{X}_{u},$ we write $\textbf{y}_{u}=\sum_{n=1}^{N}\alpha_{n}(t)\phi_{n}$ where $\alpha_{n}(t)=(\textbf{y},\phi_{n}).$
Using $(\ref{12.02.16.E2}),$ system \eqref{12.02.16.E3}-\eqref{28.03.16.E8} reduces to:
\begin{align*}
\alpha_{j}^{'}(t)+\lambda_{j}\alpha_{j}(t)+\lambda_{j}\int_{0}^{t}\beta(t-s)\alpha_{j}(s)ds
=(P_u\sum_{i=1}^{M}B\phi^{*}_{i}v_{i}(t),\phi^{*}_{j}),
\end{align*}
i.e.,
\begin{align*}
\alpha_{j}^{'}(t)+\lambda_{j}\alpha_{j}(t)+\lambda_{j}\int_{0}^{t}\beta(t-s)\alpha_{j}(s)\,ds
=\sum_{i=1}^{M}(B\phi^{*}_{i},\phi^{*}_{j})v_{i}(t).                                                      
\end{align*}
with 
\begin{align*} 
\alpha_{j}(0)=(P_uy_{0},\phi^{*}_{j})\,;\quad j=1,\cdots,N.
\end{align*}
Again differentiation with respect to $t,$ gives,  for $j=1,\cdots,N,$
\begin{align*}
\alpha_{j}^{''}(t)+(\lambda_{j}+\delta)\alpha_{j}^{'}(t)+\lambda_{j}(1+\delta)\alpha_{j}(t)=u^{'}_{j}(t)+\delta u_{j}(t)
\end{align*}
where 
\begin{align*}
u_{j}(t)=(Bu(t),\phi_{j}^*)=\sum_{i=1}^{M}(B\phi^{*}_{i},\phi^{*}_{j})v_{i}(t).                                                      
\end{align*}
Let us denote $w_{i}(t)=v_{i}'(t)+\delta v_{i}(t)$ for $i=1,\cdots,M.$
We can write the above system of ordinary differential equations as
\begin{align}\label{13.02.16.E4}
 \begin{pmatrix}
  \alpha_{1}(t)\\
  \cdot\\
  \cdot\\
  \alpha_{N}(t)
 \end{pmatrix}''
 + A_{N}
 \begin{pmatrix}
  \alpha_{1}(t)\\
  \cdot\\
  \cdot\\
  \alpha_{N}(t)
 \end{pmatrix}'
 +B_{N}\begin{pmatrix}
  \alpha_{1}(t)\\
  \cdot\\
  \cdot\\
  \alpha_{N}(t)
 \end{pmatrix}
 = C_{NM} 
 \begin{pmatrix}
  w_{1}(t)\\
  \cdot\\
  \cdot\\
  w_{M}(t)
 \end{pmatrix}.
\end{align} 
Let us denote 
\begin{align*}
 X_N^1(t)= 
 \begin{pmatrix}
  \alpha_{1}(t)\\
  \cdot\\
  \cdot\\
  \alpha_{N}(t)
 \end{pmatrix},
 \quad X_N^2(t)= 
 \begin{pmatrix}
  \alpha_{1}(t)\\
  \cdot\\
  \cdot\\
  \alpha_{N}(t)
 \end{pmatrix}',
 \quad
 W_{M}(t)=
 \begin{pmatrix}
  w_{1}(t)\\
  \cdot\\
  \cdot\\
  w_{M}(t)
 \end{pmatrix}.
\end{align*}
Then, the system $(\ref{13.02.16.E4})$ can be written as
\begin{align}\label{13.02.16.E5}
\begin{pmatrix}
X_N^1(t)\\
X_N^2(t)
\end{pmatrix}'
=P_{2N}
\begin{pmatrix}
X_N^1(t)\\
X_N^2(t)
\end{pmatrix}
+Q_{2NM}W_{M}(t).
\end{align}
Hence in matrix form, the above system can be written as
\begin{align} \label{12.02.16.E8}
X_{2N}'(t)=P_{2N}X_{2N}(t)+Q_{2NM}W_{M}(t)
\end{align}
where 
\begin{align*}
 X_{2N}(t)=
 \begin{pmatrix}
  X_{N}^1(t)\\
  X_{N}^2(t)
 \end{pmatrix}.
\end{align*}
Let us consider the transformation $Z_{2N}(t)=R_{2N}X_{2N}(t).$ Then equation $(\ref{12.02.16.E8})$ can be written as
\begin{align}\label{13.02.16.E8}
 Z_{2N}'(t)=D_{2N}Z_{2N}(t)+\overline{Q}_{2NM}W_{M}(t).
\end{align}
In order to show that system $(\ref{13.02.16.E8})$ is null-controllable, we check the variant of the Kalman controllability criterion. More precisely, we are going to prove that for $t>0,$ the matrix $(\overline{Q}_{2NM})^*e^{tD_{2N}}$ is one-one. Let
\begin{align*}
 \begin{pmatrix}
 x\\
 y
 \end{pmatrix}
 \in\mathbb{R}^{2N}
 \quad\mbox{with}\quad x=\begin{pmatrix}
                          x_{1}\\
                          \cdot\\
                          x_{N}
                         \end{pmatrix}
                         \in\mathbb{R}^N\quad\mbox{and}\quad
                         y=\begin{pmatrix}
                          y_{1}\\
                          \cdot\\
                          y_{N}
                         \end{pmatrix}
                         \in\mathbb{R}^N.
\end{align*}
be such that $$(\overline{Q}_{2NM}^*)e^{tD_{2N}}\begin{pmatrix}
 x\\
 y
 \end{pmatrix}=0. $$
Therefore,
\begin{align*}
 \overline{Q}_{2NM}^*
 \begin{pmatrix}
  e^{-tM_{N}^+}x\\
  e^{-tM_{N}^-}y
 \end{pmatrix}
 =0.
\end{align*}
Hence,
\begin{align*}
 &\sum_{j=1}^{N}\left(\overline{Q}_{2NM}^*\right)_{ij}e^{-t\mu_{j}^{+}}x_{j}
 +\sum_{j=1}^{N}\left(\overline{Q}_{2NM}^*\right)_{ij+N}e^{-t\mu_{j}^{-}}y_{j}=0, 
 \end{align*}
  for $i=1,\cdots,M,\, \forall t \geq 0.$
  This yields,
\begin{align*}
&\sum_{j=1}^{m_{1}}\left(\overline{Q}_{2NM}^*\right)_{ij}e^{-t\mu_{j}^{+}}x_{j}
 +\sum_{j=m_{1}+1}^{m_1+m_{2}}\left(\overline{Q}_{2NM}^*\right)_{ij}e^{-t\mu_{j}^{+}}x_{j}+\cdots+\\
 &\quad\quad+\sum_{j=m_1+\cdots+m_{l-1}+1}^{m_1+\cdots+m_{l}}
 \left(\overline{Q}_{2NM}^*\right)_{ij+N}e^{-t\mu_{j}^{+}}x_{j}+\sum_{j=1}^{m_{1}}
 \left(\overline{Q}_{2NM}^*\right)_{ij+N}e^{-t\mu_{j}^{-}}y_{j}\\
 &\quad\quad\quad\quad\quad\quad
 +\sum_{j=m_{1}+1}^{m_1+m_{2}}\left(\overline{Q}_{2NM}^*\right)_{ij+N}e^{-t\mu_{j}^{-}}y_{j}+\cdots\\ 
 &\quad\quad\quad\quad\quad\quad
 +\sum_{j=m_1+\cdots+m_{l-1}+1}^{m_1+\cdots+m_{l}}
 \left(\overline{Q}_{2NM}^*\right)_{ij+N}e^{-t\mu_{j}^{-}}y_{j}=0.
 \end{align*}
 for $i=1,\cdots,M,\, \forall t \geq 0.$
 Recalling the behaviour of $\mu_{j}^\pm$ as mentioned in Section 3, we have, 
\begin{align*}
  &e^{-t\mu_{m_{1}}^{+}}\sum_{j=1}^{m_{1}}\left(\overline{Q}_{2NM}^*\right)_{ij}x_{j}
 +e^{-t\mu_{m_1+m_{2}}^{+}}\sum_{j=m_{1}+1}^{m_2+m_{2}}\left(\overline{Q}_{2NM}^*\right)_{ij}x_{j}+\cdots+\\
 & +e^{-t\mu_{m_1+\cdots+m_l}^{+}}\sum_{j=m_1+\cdots+m_{l-1}+1}^{m_1+\cdots+m_{l}}
 \left(\overline{Q}_{2NM}^*\right)_{ij+N}x_{j}\\
 &+e^{-t\mu_{m_{1}}^{-}}\sum_{j=1}^{m_{1}}\left(\overline{Q}_{2NM}^*\right)_{ij+N}y_{j}\\
 & +e^{-t\mu_{m_1+m_{2}}^{-}}\sum_{j=m_{1}+1}^{m_1+m_{2}}\left(\overline{Q}_{2NM}^*\right)_{ij+N}y_{j}
 +\cdots\\
&\qquad\qquad 
 +e^{-t\mu_{m_1+\cdots+m_{l}}^{-}}\sum_{j=m_1+\cdots+m_{l-1}+1}^{m_1+\cdots+m_{l}}
 \left(\overline{Q}_{2NM}^*\right)_{ij+N}y_{j}=0.
\end{align*}
Using the fact that $e^{-t\mu_{m_1}^+},e^{-t\mu_{m_1+m_2}^+} ,\cdots, e^{-t\mu_{m_1+\cdots+m_{l}}^+}$ are linearly independent and also
$e^{-t\mu_{m_1}^-},e^{-t\mu_{m_1+m_2}^-} ,\cdots,$ \\
$ e^{-t\mu_{m_1+\cdots+m_{l}}^-}$ are linearly independent
we have for $i=1,\cdots,M;\, k=1,\cdots,l,$
\begin{align*}
 \sum_{j=m_1+\cdots+m_{k-1}+1}^{m_1+\cdots+m_{k}}\left(\overline{Q}_{2NM}^*\right)_{ij}x_{j}=0\\
 \quad\mbox{and}\quad\sum_{j=m_1+\cdots+m_{k-1}+1}^{m_1+\cdots+m_{k}}\left(\overline{Q}_{2NM}^*\right)_{ij+N}y_{j}=0.
\end{align*}
Using $(\ref{14.02.16.E11}),$ we have $x_{j}=0=y_{j}$ for all $j=1,\cdots,N.$ Hence, by Kalman controllability Theorem
there exist control $\{w_{i}\}_{i=1}^M\subset C([0,T];\mathbb{C}^M)$ such that $\textbf{y}_{u}(T)=0.$ Moreover, by the linear 
finite dimensional controllability theory, we know that $\{w_{i}\}_{i=1}^M$ can be chosen in such a way that
 $$\int_{0}^{T}|w_{i}|^2(t)dt\leq C|\textbf{y}_{u}(0)|^2\leq C|\textbf{y}_{0}|_{\tilde{H}}^2\quad\mbox{for all}\quad i=1,\cdots,M.$$
Without loss of generality, we may assume that $w_i(t)=0,\,\,\forall t \geqslant T.$
As $v_i$ satisfies $v_i'(t)+\delta v_i(t)=w_i(t),$ we have 
$$v_i(t)=e^{-\delta t}\left(v_i(0)+\int_{0}^{t}e^{\delta s}w_i(s)ds\right).$$
We assume $v_i(t)=0$ for all $t\geqslant T.$ For  that we need to take $v_i(0)=-\int_0^Te^{\delta s}w_i(s)ds.$
Hence, $$v_i(t)=-\int_t^T e^{-\delta (t-s)}w_i(s)ds=-\int_t^T\beta(t-s)w_i(s)ds.$$
From $(\ref{06.02.16.E3}), (\ref{10.02.16.E5})$ and $(\ref{24.02.16.E3})$ we note that the resolvent 
$R|_{X_s}(t)=e^{-t\mathscr{A}_s(t)}$ satisfies
$\|e^{-t\mathscr{A}_s(t)}\|\leq C e^{-t(\gamma+\epsilon_1)} \leq C e^{-t\gamma}  \quad\mbox{for all}\quad t\geqslant 0,$ for some $\epsilon_1 >0.$
From \eqref{12.02.16.E4}-\eqref{28.03.16.E9} we have 
$$|\textbf{y}(t,\textbf{y}_{0})|\leq C \Big(e^{-\gamma t}|\textbf{y}_{0}|_{\tilde{H}}+\int_{0}^{t}\sum_{i=0}^{M}
|e^{-(t-s)\mathscr{A}_s}B\phi_{i}^{*}v_{i}(s)|ds\Big)\leq C e^{-t \gamma}|\textbf{y}_{0}|_{\tilde{H}}.$$ 
Hence, we have the result.

\end{proof}

\subsection{Case of non-semisimple eigenvalues}
 In the case of non-semisimple eigenvalues, the analysis up to deriving equation \eqref{12.02.16.E8} does not 
 change but the matrix $P_{2N}$ in this case may not be diagonalizable. However, using Jordan theory we can prove that there is a $2N\times 2N$ non singular matrix $\overline{R}_{2N}$ such that $P_{2N}=(\overline{R}_{2N})^{-1}J_{2N}\overline{R}_{2N},$where
 $J_{2N}$ is the Jordan matrix associated with $P_{2N}.$ Thus, the system $(\ref{12.02.16.E8})$  can be written as
 \begin{align*}
 \overline{Z}_{2N}'(t)=J_{2N}\overline{Z}_{2N}(t)+\overline{\overline{Q}}_{2NM}W_{M}(t)
\end{align*}
where $\overline{Z}_{2N}(t)=\overline{R}_{2N}X_{2N}(t)$ and 
$\overline{\overline{Q}}_{2NM}=\overline{R}_{2N}Q_{2NM}.$
Let $\overline{\overline{Q}}_{k}$ for $k=1,\cdots,2l$ be the matrices
\begin{align*}
 &\overline{\overline{Q}}_{1}=\left\{\left(\overline{\overline{Q}}_{2NM}^*\right)_{ij} \right\}_{i=1,j=1}^{M,m_{1}},\,
 \overline{\overline{Q}}_{2}=\left\{\left(\overline{\overline{Q}}_{2NM}^*\right)_{ij}\right\}_{i=1,j=m_{1}+1}^{M,m_1+m_{2}},\cdots,\\ 
 &\overline{\overline{Q}}_{l}=\left\{\left(\overline{\overline{Q}}_{2NM}^*\right)_{ij}\right\}_{i=1,j=m_1+\cdots+m_{l-1}+1}^{M,m_1
 +\cdots+m_{l-1}+m_{l}},\\
 &\overline{\overline{Q}}_{l+1}=\left\{\left(\overline{\overline{Q}}_{2NM}^*\right)_{ij}\right\}_{i=1,j=N+1}^{M,N+m_{1}},\,\\
 &\overline{\overline{Q}}_{l+2}=\left\{\left(\overline{\overline{Q}}_{2NM}^*\right)_{ij}\right\}_{i=1,j=N+m_{1}+1}^{M,N+m_1+m_{2}},
 \cdots,\\
 &\overline{\overline{Q}}_{2l}=\left\{\left(\overline{\overline{Q}}_{2NM}^*\right)_{ij}\right\}_{i=1,j=N+m_1
 +\cdots+m_{l-1}+1}^{M,N+m_1+\cdots+m_{l-1}+m_{l}}.
\end{align*}
If $M=N,$ we will take $\Phi_i=P_u^*\phi_i.$ In this case, det$B\neq 0.$ We can show that the system is null controllable. 
Let $M<N.$ Then, the Jordan matrix $J$ has the following form
$$J^\pm = \{-\mu_{\tilde{m}_1}^\pm E_{\tilde{m}_1}^\pm + H_{\tilde{m}_1},\cdots, 
-\mu_{\tilde{m}_1+\cdots+\tilde{m}_{l}}^\pm E_{\tilde{m}_l}^\pm + H_{\tilde{m}_l}\},$$
where $E_{\tilde{m}_i}$ is a unitary matrix of order $\tilde{m}_i\leqslant m_i$ and $H_{\tilde{m}_i}$ is 
$\tilde{m}_i\times\tilde{m}_i$ matrix of the form
$$H_{\tilde{m}_{i}}=
\begin{pmatrix}
  0&1&0&\cdots&0 \\
  0&0&1&\cdots&0 \\
  0&0&0&\cdots&1 \\
  0&0&0&\cdots&0 
 \end{pmatrix},$$
 Let us set $J_i^\pm = -\mu_{\tilde{m}_1+\cdots+\tilde{m}_{i}}^\pm E_{\tilde{m}_i}^\pm + H_{\tilde{m}_i} ,$ 
 for $i=1,\cdots,l,$ then $J_{2N}$ is the matrix
 $$J_{2N}=
 \begin{pmatrix}
  J^+&0\\
  0&J^-
 \end{pmatrix},\quad
J^\pm=
 \begin{pmatrix}
  J_1^\pm&\quad&\quad&\quad\\
  \quad&J_2^\pm&\quad&0\\
  0&\quad&\cdots&\quad\\
  \quad&\quad&\quad&J_l^\pm
 \end{pmatrix}
$$
Assume for simplicity $\tilde{m}_i=m_i$ (general case follows in a similar way) and note that $m_1+m_2+\cdots+m_l=N.$ \\
We will show that for $t>0,$ the matrix $\overline{\overline{Q}}_{2NM}^*e^{tJ_{2N}^*} $ is one-one.\\
Let,
\begin{align*}
 \begin{pmatrix}
 x\\
 y
 \end{pmatrix}
 \in\mathbb{R}^{2N}
 \quad\mbox{with}\quad x=\begin{pmatrix}
                          x_{1}\\
                          \cdot\\
                          x_{N}
                         \end{pmatrix}
                         \in\mathbb{R}^N\quad\mbox{and}\quad
                         y=\begin{pmatrix}
                          y_{1}\\
                          \cdot\\
                          y_{N}
                         \end{pmatrix}
                         \in\mathbb{R}^N.
\end{align*}
be such that 
\begin{align}\label{22.02.16.E2}
(\overline{\overline{Q}}_{2NM}^*)e^{tJ_{2N}^*}\begin{pmatrix}
 x\\
 y
 \end{pmatrix}=0.
 \end{align}
 Therefore,
\begin{align*}
 \overline{\overline{Q}}_{2NM}^*
 \begin{pmatrix}
  e^{tJ^+}x\\
  e^{tJ^-}y
 \end{pmatrix}
 =0.
\end{align*}
Note that $$e^{t(J^+)^*x}=\begin{pmatrix}
                        e^{t(J_1^+)^*x^1}\\
                        \cdots\\
                        e^{t(J_l^+)^*x^l}
                       \end{pmatrix}
                       \quad\mbox{and}\quad
                       e^{t(J^-)^*x}=\begin{pmatrix}
                        e^{t(J_1^-)^*y^1}\\
                        \cdots\\
                        e^{t(J_l^-)^*y^l}
                       \end{pmatrix}
 $$
 where  $$
 x^k=\begin{pmatrix}
    x_{m_1+\cdots+m_{k-1}+1}\\
    \cdots\\
    x_{m_1+\cdots+m_{k}}
   \end{pmatrix},
   \quad y^k=\begin{pmatrix}
    y_{m_1+\cdots+m_{k-1}+1}\\
    \cdots\\
    y_{m_1+\cdots+m_{k}}
   \end{pmatrix}
   \quad\mbox{for all}\quad k=1,\cdots,l,
$$
with the convention that assuming $m_0=0,$. We note that for $k=1,\cdots,l$
$$e^{tH_{m_k}^*}=I+tH_{m_k}^*+\cdots+
\frac{t^{m_{k}-1}}{({m_{k}-1})!}(H_{m_k}^*)^{m_{k}-1}\\
=\begin{pmatrix}
  1&0&0&0\\
  t&1&0&0\\
  \cdots\\
  \frac{t^{m_{k}-1}}{({m_{k}-1})!}&\frac{t^{m_{k}-2}}{({m_{k}-2})!}&\cdots&1
\end{pmatrix}.
$$
So for $k=1,\cdots,l$
\begin{align*}
e^{tH_{m_k}^*}x^k&=
e^{tH_{m_k}^*}
\begin{pmatrix}
                x_{m_1+\cdots+m_{k-1}+1}\\
                x_{m_1+\cdots+m_{k-1}+2}\\
                \cdot\\
                x_{m_1+\cdots+m_k}
               \end{pmatrix}
\\               
              & =
\begin{pmatrix}
  1&0&0&0\\
  t&1&0&0\\
  \cdots&\cdots&\cdots&\cdots\\
  \frac{t^{m_{k}-1}}{({m_{k}-1})!}&\frac{t^{m_{k}-2}}{({m_{k}-2})!}&\cdots&1
\end{pmatrix}
\begin{pmatrix}
                x_{m_1+\cdots+m_{k-1}+1}\\
                x_{m_1+\cdots+m_{k-1}+2}\\
                \cdots\\
                x_{m_1+\cdots+m_k}
               \end{pmatrix}\\
               &=
               \begin{pmatrix}
                        x_{m_1+\cdots+m_{k-1}+1}\\
                        tx_{m_1+\cdots+m_{k-1}+1}+x_{m_1+\cdots+m_{k-1}+2}\\
                        \cdots\\
                        \frac{t^{m_{k}-1}}{({m_{k}-1})!}x_{m_1+\cdots+m_{k-1}+1}
                        +\frac{t^{m_{k}-2}}{({m_{k}-2})!}x_{m_{m_1+\cdots+k-2}+1}
                        \cdots+x_{m_1+\cdots+m_{k}}
                        \end{pmatrix}.
\end{align*}
Hence, for $k=1,\cdots,l$
\begin{align*}
&e^{t(J_k^+)^*}x^k=e^{-t\mu_{m_1+\cdots+m_{k}}^+(E_{m_k}^+)^*}e^{tH_{m_k}^*}\\
&=e^{-t\mu_{m_1+\cdots+m_{k}}^+(E_{m_k}^+)^*}\\
&\begin{pmatrix}
                        x_{m_1+\cdots+m_{k-1}+1}\\
                        tx_{m_1+\cdots+m_{k-1}+1}+x_{m_1+\cdots+m_{k-1}+2}\\
                        \cdots\\
                        \frac{t^{m_{k}-1}}{({m_{k}-1})!}x_{m_1+\cdots+m_{k-1}+1}
                        +\frac{t^{m_{k}-2}}{({m_{k}-2})!}x_{m_{m_1+\cdots+k-2}+1}
                        \cdots+x_{m_1+\cdots+m_{k}}
                        \end{pmatrix},
\end{align*}
and similarly
\begin{align*}
&e^{t(J_k^-)^*}y^k= e^{-t\mu_{m_1+\cdots+m_{k}}^+(E_{m_k}^-)^*}e^{tH_{m_k}^*}\\\\
&=e^{-t\mu_{m_1+\cdots+m_{k}}^-(E_{m_k}^-)^*}
\\
&\begin{pmatrix}
                        y_{m_1+\cdots+m_{k-1}+1}\\
                        ty_{m_1+\cdots+m_{k-1}+1}+y_{m_1+\cdots+m_{k-1}+2}\\
                        \cdots\\
                        \frac{t^{m_{k}-1}}{({m_{k}-1})!}y_{m_1+\cdots+m_{k-1}+1}
                        +\frac{t^{m_{k}-2}}{({m_{k}-2})!}y_{m_{m_1+\cdots+k-2}+1}
                        \cdots+y_{m_1+\cdots+m_{k}}
                        \end{pmatrix}.
\end{align*}
Hence, using the facts that $e^{-t\mu_{m_1+\cdots+m_{k}}^-(E_{m_k}^-)^*}$'s are linearly independent
and following the same way as in semisimple case, we have from $(\ref{22.02.16.E2}),$ that for $i=1,\cdots, M,$ and
$k=1,\cdots,l,$
\begin{align*}
 \sum_{j=m_1+\cdots+m_{k-1}+1}^{m_1+\cdots+m_{k}}
 \left(\overline{\overline{Q}}_{2NM}^*\right)_{ij}x_{j}=0,\\
 \quad\mbox{and}\quad\sum_{j=m_1+\cdots+m_{k-1}+1}^{m_1+\cdots+m_{k}}\left(\overline{\overline{Q}}_{2NM}^*\right)_{ij+N}y_{j}=0.
\end{align*}
Using the assumption
\begin{align}\label{22.02.16.E3}
\mbox{rank }\overline{\overline{Q}}_{k}=\mbox{rank }\overline{\overline{Q}}_{k+l}=m_{k}
\quad\mbox{for all}\quad k=1,\cdots,l,
\end{align}
we have $x_j=0=y_j$ for all $j=1,\cdots,2N.$ 
Hence, the given system is null controllable. Since $\overline{R}_{2N}$ is non-singular, we can choose $c_{ik}$ in such way that
$\Phi_{i}=\sum_{k=1}^{N}c_{ik}P_u^*\phi_k$ and conditions in $(\ref{22.02.16.E3})$ hold.
Hence we have the general theorem.
\begin{thm}\label{28.02.16.T1}
Under the rank assumptions 
$(\ref{14.02.16.E11})$ for semisimple eigenvalues and $(\ref{22.02.16.E3})$ for non-semisimple eigenvalues, there is a controller $u$ of the form
\begin{align*} 
u(t)=\sum_{i=1}^{M}\Phi_{i}v_{i}(t), \quad \forall t\geqslant 0,
\end{align*}
which stabilizes exponentially system \eqref{PIDE2}-\eqref{PIDE4}.
In other words, the solution $\mathbf{y} \in C^1([0,\infty);\tilde{H})$ of the system \eqref{PIDE2}-\eqref{PIDE4} with control given by $(\ref{PIDE22})$ satisfies,
\begin{align*} 
|\mathbf{y}(t)|_{\tilde{H}} \leq C \e^{-\gamma t}|\mathbf{y}_{0}|_{\tilde{H}}, \quad t \geqslant 0.
\end{align*}
Moreover, for any $T>0,$ the controller $v=\{v_{i}\}_{i=1}^{M}$ can be chosen in $C([0,T),\mathbb{C}^{M})$  such that 
\begin{align*} 
v_{i}(t)=-\int_{t}^{T}\beta(t-s)w_{i}(s)\,ds,\,\,
\int_{0}^{T}|w_{i}(t)|^{2}_{M}dt \,&\leq C|\mathbf{y}_{0}|_{\tilde{H}}^{2},\,\,
v_{i}(t)=0=w_i(t) 
\end{align*}
$\quad \mbox{for}\quad t \geqslant T$ and $\{\Phi_{i}\}_{i=1}^{M} \,\subset\, D(A)$ is a system of
eigenfunctions which is made precise in the proof below. In addition, the controller $v=\{v_i\}_{i=1}^{M}$
 can also be found as a $C^2([0,\infty), \mathbb{C}^M)$ function such that 
 \begin{align*}
v_{j} &\in C^{2}[0,\infty),\\
|v_{j}(t)|+|v_{j}^{'}(t)|+|v_{j}^{''}(t)| &\leq C e^{-\gamma t}|\mathbf{y}_{0}|_{\tilde{H}}\quad \forall\, t > 0;\, j=1,...,M.
\end{align*}
\end{thm}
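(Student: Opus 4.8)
The plan is to reduce the statement to two ingredients already prepared above: the null controllability of the finite-dimensional unstable subsystem (established for semisimple eigenvalues via the diagonalization $P_{2N}=R_{2N}^{-1}D_{2N}R_{2N}$ under $(\ref{14.02.16.E11})$, and for non-semisimple eigenvalues via the Jordan reduction $P_{2N}=(\overline{R}_{2N})^{-1}J_{2N}\overline{R}_{2N}$ under $(\ref{22.02.16.E3})$), and the exponential decay with rate $\gamma$ of the restricted resolvent $R_s(t)=e^{-t\mathscr{A}_s(t)}$ on $\mathrm{X}_s$. Writing $\mathbf{y}=\mathbf{y}_u+\mathbf{y}_s$ with $\mathbf{y}_u=P_u\mathbf{y}\in\mathrm{X}_u$ and $\mathbf{y}_s=(I-P_u)\mathbf{y}\in\mathrm{X}_s$ decouples $(\ref{PIDE2})$--$(\ref{PIDE4})$ into the systems $(\ref{12.02.16.E3})$--$(\ref{28.03.16.E8})$ and $(\ref{12.02.16.E4})$--$(\ref{28.03.16.E9})$, driven by the control $(\ref{PIDE22})$ with $\Phi_i=P_u^{*}\phi_i$ when $M=N$ and $\Phi_i=\sum_{k=1}^{N}c_{ik}P_u^{*}\phi_k$ in general, the coefficients $c_{ik}$ chosen -- possible since $\overline{R}_{2N}$ is non-singular -- so that the relevant rank conditions hold; since $\mathrm{X}_u\subset D(A)$ one has $\{\Phi_i\}\subset D(A)$.

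For the unstable part, projecting $(\ref{12.02.16.E3})$--$(\ref{28.03.16.E8})$ onto the biorthogonal system $\{\phi_j^{*}\}_{j=1}^{N}$ of $(\ref{12.02.16.E2})$ and differentiating once in $t$ produces the linear ODE system $(\ref{12.02.16.E8})$, $X_{2N}'=P_{2N}X_{2N}+Q_{2NM}W_M$ with $W_M=(v_i'+\delta v_i)_{i=1}^{M}$. The computations already carried out show that $(\overline{Q}_{2NM})^{*}e^{tD_{2N}}$, respectively $(\overline{\overline{Q}}_{2NM})^{*}e^{tJ_{2N}^{*}}$, is injective for $t>0$; by the Kalman rank theorem $(\ref{12.02.16.E8})$ is therefore null controllable, so there is $W_M=\{w_i\}_{i=1}^{M}\in C([0,T];\mathbb{C}^{M})$, extended by zero to $[0,\infty)$, driving $X_{2N}(0)$ to $X_{2N}(T)=0$, and -- using a bounded right inverse of the surjective control-to-state map -- with $\int_0^{T}|w_i(t)|^{2}\,dt\le C|X_{2N}(0)|^{2}\le C|\mathbf{y}_0|_{\tilde H}^{2}$. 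Solving $v_i'+\delta v_i=w_i$ with $v_i(0)=-\int_0^{T}e^{\delta s}w_i(s)\,ds$ gives $v_i(t)=-\int_t^{T}\beta(t-s)w_i(s)\,ds$ and $v_i\equiv 0\equiv w_i$ on $[T,\infty)$; consequently $X_{2N}\equiv 0$, hence $\mathbf{y}_u\equiv 0$, on $[T,\infty)$, while on the compact interval $[0,T]$ continuous dependence for the ODE gives $|\mathbf{y}_u(t)|\le C|\mathbf{y}_0|_{\tilde H}\le Ce^{\gamma T}e^{-\gamma t}|\mathbf{y}_0|_{\tilde H}$. Thus $|\mathbf{y}_u(t)|\le Ce^{-\gamma t}|\mathbf{y}_0|_{\tilde H}$ for all $t\ge 0$.

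For the stable part, from $(\ref{10.02.16.E5})$, $(\ref{24.02.16.E3})$ and the definition $(\ref{02.03.16.E1})$ of $N$ -- which forces $\mathrm{Re}\,\mu_n^{\pm}\ge\gamma+\epsilon_1$ for all $n\ge N+1$ and some $\epsilon_1>0$, the prefactors $1/(\mu_n^{+}-\mu_n^{-})$ being uniformly bounded since $\mu_n^{+}-\mu_n^{-}\sim\lambda_n\to\infty$ and Case~1 is excluded -- one obtains $\|e^{-t\mathscr{A}_s(t)}\|\le Ce^{-(\gamma+\epsilon_1)t}$ for $t\ge 0$. Representing $\mathbf{y}_s$ by variation of constants against this resolvent, as in $(\ref{06.02.16.E3})$, and using that $(I-P_u)\sum_iB\phi_i^{*}v_i$ is bounded in $\tilde H$-norm by $C|\mathbf{y}_0|_{\tilde H}$ and supported in $[0,T]$, a direct convolution estimate gives $|\mathbf{y}_s(t)|\le Ce^{-(\gamma+\epsilon_1)t}|\mathbf{y}_0|_{\tilde H}+C\int_0^{t}e^{-(\gamma+\epsilon_1)(t-s)}\sum_i|v_i(s)|\,ds\le Ce^{-\gamma t}|\mathbf{y}_0|_{\tilde H}$. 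Adding the two bounds yields $|\mathbf{y}(t)|_{\tilde H}\le Ce^{-\gamma t}|\mathbf{y}_0|_{\tilde H}$ together with the stated formula for $v_i$, exactly as in Theorem~\ref{10.02.16.T2}.

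For the $C^{2}$ alternative I would replace finite-time steering by a stabilizing feedback on the unstable block: the same controllability of $(\ref{12.02.16.E8})$ permits a pole-assignment choice of a gain $F$ with $\sigma(P_{2N}+Q_{2NM}F)\subset\{\mathrm{Re}<-\gamma\}$; then $W_M(t)=Fe^{t(P_{2N}+Q_{2NM}F)}X_{2N}(0)$ is real-analytic with $|W_M(t)|+|W_M'(t)|+|W_M''(t)|\le Ce^{-\gamma t}|\mathbf{y}_0|_{\tilde H}$, and solving $v_i'+\delta v_i=w_i$ with $v_i(0)=0$ -- using $\delta>0$ and $\omega_0=\delta+1>\gamma$ -- yields $v_i\in C^{2}[0,\infty)$ with $|v_i(t)|+|v_i'(t)|+|v_i''(t)|\le Ce^{-\gamma t}|\mathbf{y}_0|_{\tilde H}$; the state estimate then follows as above. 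The genuinely new and most delicate point is the non-semisimple controllability computation -- extracting, from the linear independence of the Jordan-block exponentials $e^{t(J_k^{\pm})^{*}}$ together with $(\ref{22.02.16.E3})$, that $x=y=0$ -- which has already been carried out above; the only other place demanding care is the uniform-in-$t$ summability of the Fourier series defining $R_s(t)$ and the check that convolving its $e^{-(\gamma+\epsilon_1)t}$ decay against the compactly supported, $O(|\mathbf{y}_0|_{\tilde H})$ control does not degrade the exponential rate below $\gamma$.
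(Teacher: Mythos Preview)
Your proposal is correct and follows essentially the same route as the paper: the theorem is presented as the synthesis of the semisimple controllability argument of Theorem~\ref{10.02.16.T2} with the Jordan-block computation of Section~4.1, and the paper's own ``proof'' is precisely the preceding analysis capped by ``Hence we have the general theorem.'' Your only addition is an explicit mechanism for the $C^{2}$ alternative via pole assignment on $(P_{2N},Q_{2NM})$, which the paper asserts but does not spell out; this is a harmless elaboration and is consistent with the finite-dimensional controllability already established.
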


In the next subsection, from the above construction we will derive  a real valued finite dimensional controller which has a stabilizing effect on the system \eqref{PIDE2}-\eqref{PIDE4}. 
\subsection{Existence of a real valued controller}
As, $A$ is self-adjoint operator, all the eigenvalues and eigenfunctions of $A$ are real.
The above Theorem \ref{10.02.16.T2} can be proven in the real Hilbert space $H$ by taking into account that, 
\begin{align*}
\textbf{y}(t)&=\mbox{Re}\,\textbf{y}(t)+i\mbox{Im}\,\textbf{y}(t),\\
v_{j}&=\mbox{Re}\,v_{j}+i\mbox{Im}\, v_{j}\quad ;j=1,...,M
\end{align*}
Hence, there is a controller 
$u^{*}:[0,\infty) \rightarrow H$ of the form
\begin{align*}
u^{*}(t)=\sum_{i=1}^{M}B\phi_{i}\mbox{Re} v_{i}(t)
\end{align*}
which exponentially stabilizes the real system
\begin{align} \label{15.02.16.E100}
\frac{d\textbf{y}}{dt}+ A \textbf{y}(t)+\int_{0}^{t}\beta(t-s)A\textbf{y}(s)
 \,ds&=u^{*}(t),\,\forall\,t > 0.\\
 \beta^{'}(t)+ \delta \beta(t)&=0.\,\forall\,t > 0.\\\label{28.03.16.E11}
 \textbf{y}(0)=\textbf{y}_{0}, \beta(0)&=1. \quad (\delta > 0)
\end{align}
For $1 \leq M \leq N,$ if the spectrum is semisimple, then
 \begin{align*}
 \Phi_{j}=\phi_{j}^{*},\quad j=1,...,M,
 \end{align*}
 and for general case $\Phi_j\in$ lin span$\{P_u^*\phi_i\}_{i=1}^N$ for $j=1,2,\cdots, M.$\\
 We recall that now onwards everywhere, $|.|$ denotes the norm in H and $(.,.)$ represents it's scalar product. 
Hence we have the following theorem.
\begin{thm} \label{15.02.16.T10}
There is a real controller $u^{*}$ of the form
\begin{align*} 
u^{*}(t)=\sum_{i=1}^{M}B\psi_{i}v_{i}^{*}(t),\quad t>0,
\end{align*}
such that the corresponding solution
\begin{align*} 
\mathbf{y}^{*} \in C^1([0,T];H)\cap L^{2}(0,T;D(A^{\frac{1}{2}}))\cap L^{2}_{Loc}(0,\infty;D(A))\quad \forall\, T>0.
\end{align*}
to, system \eqref{15.02.16.E100}-\eqref{28.03.16.E11}
satisfies the estimate,
\begin{align*} 
|\mathbf{y}^{*}(t)| \leq C e^{-\gamma t}|\mathbf{y}_{0}|\quad \forall\, t \geq 0.
\end{align*}
and
\begin{align*}
v_{j} &\in C^{2}[0,\infty),\\
|v_{j}(t)|+|v_{j}^{'}(t)|+|v_{j}^{''}(t)| &\leq C e^{-\gamma t}|\mathbf{y}_{0}|\quad \forall\, t > 0;\, j=1,...,M.
\end{align*}
If all $\mu_j^\pm$ are semisimple then $\psi_j=\phi_j^*$ and for general case $\psi_j\in  Span \;\{P_u^*\phi_i\}_{i=1}^{M}.$
\end{thm}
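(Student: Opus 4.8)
The plan is to obtain Theorem~\ref{15.02.16.T10} as the real counterpart of Theorem~\ref{28.02.16.T1} (Theorem~\ref{10.02.16.T2} when the spectrum is semisimple), and then to bootstrap the regularity of the closed-loop trajectory by parabolic energy estimates.

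First I would exploit the self-adjointness of $A$: by Theorem~\ref{10.02.16.T1} and the subsequent remark the eigenfunctions $\{\phi_n\}$ may be taken real, hence the biorthogonal family $\{\phi_j^*\}_{j=1}^N$, the projection $P_u$, and all the matrices $A_N,B_N,C_{NM},P_{2N},Q_{2NM}$ of Sections~3--4 are real. Since $\beta$ and $B$ are real and $\mathscr A$ has real coefficients, applying $\mathrm{Re}(\cdot)$ to the $\tilde H$-valued solution $\mathbf y$ furnished by Theorem~\ref{28.02.16.T1} for the complex control $u(t)=\sum_{i=1}^M\Phi_i v_i(t)$ shows that $\mathbf y^*:=\mathrm{Re}\,\mathbf y$ solves the real system \eqref{15.02.16.E100}--\eqref{28.03.16.E11} with the real control $u^*(t)=\sum_{i=1}^M B\psi_i v_i^*(t)$, where $\psi_i:=\Phi_i$ and $v_i^*:=\mathrm{Re}\,v_i$; this has precisely the stated form, with $\psi_j=\phi_j^*$ in the semisimple case and $\psi_j\in\mathrm{Span}\{P_u^*\phi_i\}_{i=1}^M$ in general. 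The controller bounds $v_j^*\in C^2[0,\infty)$ and $|v_j^*(t)|+|(v_j^*)'(t)|+|(v_j^*)''(t)|\le Ce^{-\gamma t}|\mathbf y_0|$, together with the decay $|\mathbf y^*(t)|\le|\mathbf y(t)|_{\tilde H}\le Ce^{-\gamma t}|\mathbf y_0|$, then follow at once from Theorem~\ref{28.02.16.T1} and $|\mathrm{Re}\,z|\le|z|$ (and $|\mathbf y_0|_{\tilde H}=|\mathbf y_0|$ since $\mathbf y_0$ is real).

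It remains to establish the regularity $\mathbf y^*\in C^1([0,T];H)\cap L^2(0,T;D(A^{1/2}))\cap L^2_{Loc}(0,\infty;D(A))$ for all $T>0$. I would split $\mathbf y^*=P_u\mathbf y^*+(I-P_u)\mathbf y^*=:\mathbf y_u^*+\mathbf y_s^*$; since $\mathrm X_u\subset D(A)$ and the coordinates of $\mathbf y_u^*$ are $C^2$ with exponential decay (by the previous paragraph), $\mathbf y_u^*\in C^2([0,\infty);D(A))$. For $\mathbf y_s^*$, which solves \eqref{12.02.16.E4}--\eqref{28.03.16.E9} with the real $C^2$ forcing $f_s=(I-P_u)\sum_i B\psi_i v_i^*$, I would test the equation with $\mathbf y_s^*$; integrating in time, discarding the memory term by Lemma~\ref{13.05.15.L1} applied mode-by-mode in the eigenbasis (to the Fourier coefficients of $A^{1/2}\mathbf y_s^*$), and using Young's inequality and Gronwall's lemma (Theorem~\ref{07.03.16.E1}), one gets $\mathbf y_s^*\in L^2(0,T;D(A^{1/2}))$ for every $T$. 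Testing instead with $A\mathbf y_s^*$, restarting from a time $\varepsilon>0$ at which $\mathbf y_s^*(\varepsilon)\in D(A^{1/2})$ (or inserting a temporal weight $t$), again using Lemma~\ref{13.05.15.L1} for the memory term and the interpolation inequality Lemma~\ref{15.02.16.L1} for the lower-order terms, one obtains $\mathbf y_s^*\in L^2_{Loc}(0,\infty;D(A))$. The $C^1([0,T];H)$ regularity follows from the resolvent representation \eqref{06.02.16.E3} and Theorem~\ref{06.02.16.T2}, the $C^2$-smoothness of $u^*$ providing the required regularity of the forcing.

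The only step that is not pure bookkeeping is the $D(A)$-energy estimate for $\mathbf y_s^*$: one must absorb the convolution term $\int_0^t\beta(t-s)A\mathbf y_s^*(s)\,ds$ tested against $A\mathbf y_s^*(t)$ without spoiling the gained regularity, and simultaneously cope with the weak datum $\mathbf y_0\in H$. The first is handled by the sign property of Lemma~\ref{13.05.15.L1} (which is precisely why it must be applied component-wise, so that its scalar statement applies), the second by restarting the estimate from a positive time. Everything else is inherited from the complex construction of Sections~3--4.
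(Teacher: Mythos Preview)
Your proposal is correct and follows the paper's approach: the paper also obtains the real controller simply by taking real parts of the complex construction in Theorem~\ref{28.02.16.T1}, relying on the self-adjointness of $A$ to guarantee that the eigenfunctions, the biorthogonal system, and hence the $\psi_j$ are real. The paper in fact gives no separate argument for the regularity $\mathbf y^*\in L^2(0,T;D(A^{1/2}))\cap L^2_{Loc}(0,\infty;D(A))$, so your energy-estimate bootstrap (splitting along $\mathrm X_u\oplus\mathrm X_s$ and using Lemma~\ref{13.05.15.L1} for the memory term) supplies strictly more detail than the original.
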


\section{Internal stabilization via feedback controller}
This section is devoted to the exponential stabilization of the system \eqref{PIDE2}-\eqref{PIDE4} by means of feedback control derived from associated linear-quadratic optimal control problem. This method is implemented by finding the solution to the algebraic Riccati equation.
It is to be noted that $A^{\epsilon},\, 0< \epsilon < 1,$ is the fractional power of the operator $A$ and $|A^{\epsilon} \textbf{y}|^2=(A^{2\epsilon}\textbf{y},\textbf{y}).$

\begin{thm} \label{26.12.15.T1} Main Theorem:\\
 Let $\alpha \in [\frac{1}{2},\frac{3}{4}].$ Then for each $\gamma,\,0<\gamma<\delta  $, there exists $M$ (as in $(\ref{03.03.16.E1})$), and a linear
self-adjoint operator $R:D(R)\subset H\rightarrow H$ such that for some constants $0<a_{1}<a_{2}<\infty,\,\mbox{and}\, \,C_{1}>0,$ 
we have:
\begin{itemize}
\item[(i)] The following equivalence inequality holds true:
\begin{align} \label{26.12.15.E5}
a_{1}|A^{\alpha-\frac{1}{2}}\mathbf{y}|^{2}\leqslant (R\mathbf{y},\mathbf{y})
\leqslant a_{2}|A^{\alpha-\frac{1}{2}}\mathbf{y}|^{2} 
\quad\mbox{for all}\quad \mathbf{y} \in D(A^{\alpha-\frac{1}{2}}),
\end{align}
\item[(ii)] The operator $R:D(A^{2\alpha-1}) \rightarrow H$ is bounded, i.e.,
\begin{align}\label{14.01.15.E1}
|R\mathbf{y}| \leqslant C_{1}|A^{2\alpha-1}\mathbf{y}| \quad\mbox{for all}\quad \mathbf{y} \in D(A^{2\alpha-1}),
\end{align}
\item[(iii)] $R$ satisfies the following algebraic Riccati equation:
\begin{align}\label{14.01.15.E2}
\left( A \mathbf{y}+\int_{0}^{t}\tilde{\beta}(t-s)A\mathbf{y}(s)
 \,ds-\gamma\mathbf{y},R\mathbf{y}\right)+\frac{1}{2}\sum_{i=1}^{M}\left(B\psi_{i},R\mathbf{y}\right)^{2}
 =\frac{1}{2}|A^{\alpha}\mathbf{y}|^2 
\end{align}
\end{itemize}
 $\quad\mbox{for all}\quad \mathbf{y} \in D(A)$ where $\tilde{\beta}(t)=e^{-(\delta-\gamma)t}.$
Moreover the Feedback controller
\begin{align} \label{07.02.16.E1}
u^{*}(t)=-\sum_{j=1}^{M}(B\psi_{j},R\mathbf{y}(t))\psi_{j}
\end{align}
exponentially stabilizes the linear system
\begin{align} \label{26.12.15.E2}
 \frac{d\mathbf{y}}{dt}+ A \mathbf{y}(t)+\int_{0}^{t}\beta(t-s)A\mathbf{y}(s)
 \,ds&=u^*(t),t\geqslant 0.\\
 \mathbf{y}(0)&=\mathbf{y}_0
\end{align}
that is, the solution $\mathbf{y}^*(t)$ to the corresponding closed loop system satisfies
\begin{align} \label{14.02.16.E113}
|A^{\alpha-\frac{1}{2}}\mathbf{y}^*(t)| &\leqslant C \e^{-\gamma t}|A^{\alpha-\frac{1}{2}}\mathbf{y}_0|
\end{align}
and
\begin{align}\label{14.01.15.E3}
\int_{0}^{\infty}\e^{2\gamma t}|A^{\alpha}\mathbf{y}^*(t)|^2dt&\leqslant C|A^{\alpha-\frac{1}{2}}\mathbf{y}_0|^2.
\end{align}
\end{thm}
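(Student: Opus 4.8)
The plan is to realize the feedback law through a linear–quadratic optimal control problem with a shifted (exponentially weighted) state and an unbounded observation operator $A^{\alpha}$, and then to read off the Riccati operator $R$ from the value function. First I would perform the standard change of variables: set $\mathbf{z}(t)=\e^{\gamma t}\mathbf{y}(t)$, which turns \eqref{26.12.15.E2} into the system with kernel $\tilde\beta(t)=\e^{-(\delta-\gamma)t}$ and drift $A\mathbf{z}-\gamma\mathbf{z}+\int_0^t\tilde\beta(t-s)A\mathbf{z}(s)\,ds$; note $\delta-\gamma>0$ is exactly what keeps the new kernel integrable, and the condition $\gamma<\delta=\omega_0$ enters here. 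Then I would consider, for $\mathbf{y}_0\in D(A^{\alpha-\frac12})$, the cost functional
\begin{align*}
J(\mathbf{v})=\frac12\int_0^\infty\Big(|A^{\alpha}\mathbf{z}(t)|^2+\sum_{i=1}^M|v_i(t)|^2\Big)\,dt
\end{align*}
over controls $\mathbf{v}=\{v_i\}_{i=1}^M$ entering through $\sum_i B\psi_i v_i$. The set of admissible controls is nonempty and $J$ is finite on it: this is precisely where Theorem \ref{15.02.16.T10} is used, since the open-loop stabilizing control constructed there gives a trajectory with $\int_0^\infty \e^{2\gamma t}|A^{\alpha}\mathbf{y}|^2\,dt<\infty$, i.e. finite cost for $\mathbf{z}$. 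Strict convexity and coercivity of $J$ in $\mathbf{v}$ then yield a unique optimal pair $(\mathbf{z}^*,\mathbf{v}^*)$.

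Next I would set $R\mathbf{y}_0:=$ (the Gâteaux derivative with respect to $\mathbf{y}_0$ of the optimal value $\Phi(\mathbf{y}_0):=J(\mathbf{v}^*)$), equivalently $\tfrac12(R\mathbf{y}_0,\mathbf{y}_0)=\Phi(\mathbf{y}_0)$ since $\Phi$ is a quadratic form. Positivity and self-adjointness of $R$ are immediate from the form of $\Phi$. The two-sided bound (i) comes from: the lower bound $a_1|A^{\alpha-\frac12}\mathbf{y}_0|^2\le \Phi(\mathbf{y}_0)$ by testing the state equation against $A^{2\alpha-1}\mathbf{z}$, using Lemma \ref{13.05.15.L1} to discard the (nonnegative) memory term and the interpolation Lemma \ref{15.02.16.L1} to absorb the $\gamma\mathbf{z}$ term, giving $\frac{d}{dt}|A^{\alpha-\frac12}\mathbf{z}|^2\lesssim |A^{\alpha}\mathbf{z}|^2+\dots$; the upper bound $\Phi(\mathbf{y}_0)\le a_2|A^{\alpha-\frac12}\mathbf{y}_0|^2$ follows by plugging the explicit open-loop control of Theorem \ref{15.02.16.T10} into $J$ and using the $C^2$ bounds on $v_j$ there. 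For (ii), differentiating the optimality system (the state equation coupled with the adjoint $\mathbf{p}$ satisfying a backward PIDE with source $A^{2\alpha}\mathbf{z}^*$ and the pointwise relation $v_i^*=-(B\psi_i,\mathbf{p})$) and using maximal-regularity / interpolation estimates gives $R:D(A^{2\alpha-1})\to H$ bounded — the restriction $\alpha\in[\tfrac12,\tfrac34]$ is what makes $2\alpha-1\le\tfrac12$ so that $B\in\mathcal L(U,H)$ is still admissible relative to $A^{2\alpha-1}$ and the adjoint source term is controllable.

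Then I would derive the Riccati equation \eqref{14.01.15.E2} by the classical dynamic-programming identity: along the optimal trajectory, $t\mapsto \e^{?}\Phi(\mathbf{z}^*(t))$ is monotone and differentiating the Bellman relation $\Phi(\mathbf{y})=\inf_{\mathbf v}\{\int_0^h(\dots)+\Phi(\mathbf{z}(h))\}$ at $h=0$ gives, after substituting the feedback form $v_i^*=-(B\psi_i,R\mathbf{z})$ that realizes the infimum, exactly \eqref{14.01.15.E2} with the memory term carrying the kernel $\tilde\beta$. Finally, closing the loop with \eqref{07.02.16.E1} (which is $v_i^*=-(B\psi_i,R\mathbf y)$ written back in the original variable), I would test the closed-loop equation against $A^{2\alpha-1}\mathbf{y}^*$, use \eqref{14.01.15.E2} to identify the resulting terms, invoke Lemma \ref{13.05.15.L1} on the memory integral and Gronwall's Lemma \ref{07.03.16.E1}, to obtain $\frac{d}{dt}(R\mathbf y^*,\mathbf y^*)+2\gamma(R\mathbf y^*,\mathbf y^*)+|A^{\alpha}\mathbf y^*|^2\le 0$; integrating and using (i) yields both \eqref{14.02.16.E113} and \eqref{14.01.15.E3}. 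The main obstacle I anticipate is handling the \emph{memory term} rigorously inside the dynamic-programming argument: unlike the classical LQ case the "state" is really the pair $(\mathbf y,g)$ (as in \eqref{25.02.16.E10}), so I would either work with the augmented state and argue that the quadratic form degenerates onto the $\mathbf y$-component because of the special exponential kernel, or justify directly that Lemma \ref{13.05.15.L1} lets the memory contribution be treated as a nonnegative perturbation throughout; reconciling this with the unboundedness of $A^{\alpha}$ is the delicate point.
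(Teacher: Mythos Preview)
Your proposal follows essentially the same route as the paper: set up the shifted LQ problem with observation $A^{\alpha}$, use the open-loop stabilizing control of Theorem~\ref{15.02.16.T10} to show the admissible set is nonempty and to get the upper bound in (i), obtain the lower bound by energy estimates with Lemma~\ref{13.05.15.L1} and interpolation, define $R$ via the value function, derive the Riccati identity by dynamic programming/maximum principle, and read off the decay estimates from the closed-loop Lyapunov identity.

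Two small corrections relative to the paper's execution. First, for the final stabilization step you should test the closed-loop equation against $R\tilde{\mathbf{y}}$ (in the shifted variable), not against $A^{2\alpha-1}\mathbf{y}^*$: this is what makes the Riccati equation \eqref{14.01.15.E2} directly collapse the drift-plus-memory term into $\tfrac12|A^{\alpha}\tilde{\mathbf{y}}|^2-\tfrac12\sum_i(R\tilde{\mathbf{y}},B\psi_i)^2$, yielding the clean identity $\tfrac{d}{dt}(R\tilde{\mathbf{y}},\tilde{\mathbf{y}})+|A^{\alpha}\tilde{\mathbf{y}}|^2+\sum_i(R\tilde{\mathbf{y}},B\psi_i)^2=0$ without any further appeal to Lemma~\ref{13.05.15.L1} or Gronwall. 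Second, for (ii) the paper's concrete mechanism is to set $z(t)=A^{1-2\alpha}q_T(t)$ for the backward adjoint $q_T$, estimate $z$ in $L^\infty(0,T;D(A^{2\alpha-1}))$ (this is where $4\alpha-2\in[0,1]$, i.e.\ $\alpha\le\tfrac34$, is used), deduce $R\mathbf{y}_0=-q_T(0)\in H$, and then invoke the closed graph theorem on $R:D(A^{2\alpha-1})\to H$; your ``maximal-regularity/interpolation'' sketch is in the right spirit but should be made precise this way. (Minor slip: $\omega_0=\delta+1$, not $\delta$; the theorem's hypothesis is $\gamma<\delta$, which is what guarantees $\tilde\beta\in L^1$.)
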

\begin{proof}
Let $\alpha \in [\frac{1}{2},\frac{3}{4}].$ Let us consider the optimization problem
\begin{align*}
\varphi(\textbf{y}_{0})=\min\left\{\frac{1}{2}\int_{0}^{\infty}\left( |A^{\alpha}\textbf{y}(t)|^2
+|u(t)|_{M}^{2}\right)dt \right\}
\end{align*}
 subject to $u \in L^{2}(0,\infty;\mathbb{R}^{M})$ and 
\begin{align} \label{07.02.16.E2}
\partial_{t}\textbf{y}+ A \textbf{y}+\int_{0}^{t}\tilde{\beta}(t-s)A\textbf{y}(s)
 \,ds-\gamma\textbf{y}&=\sum_{i=1}^{M}B\psi_{i}u_{i}(t) \\\label{30.03.16.E1}
 \textbf{y}(0)&=\textbf{y}_{0}.
\end{align}
 Let us define $Fu=\sum_{i=1}^{M}B\psi_{i}u_{i}$ for $u=(u_{1},\cdots,u_{M})\in\mathbb{R}^{M}.$ Here $|.|_M$ denotes the Euclidean norm in $\mathbb{R}^{M}$. By previous theorem, there exists an admissible pair $(\textbf{y},u)$ with $\textbf{y} \in  
 L^{2}(0,\infty;H)\cap L^{2}_{Loc}(0,\infty;D(A))$. Indeed, by \eqref{07.02.16.E2}-\eqref{30.03.16.E1} we get the following $a\,priori$ estimates
 \begin{align*}
 \frac{1}{2}\frac{d}{dt}|\textbf{y}(t)|^{2}+|A^{\frac{1}{2}}\textbf{y}(t)|^{2}
 +\int_{0}^{t}\tilde{\beta}(t-s)(A\textbf{y}(s),\textbf{y}(t))ds\, \leqslant \gamma|\textbf{y}(t)|^{2}\\
 +\frac{|Fu(t)|^{2}}{2}+\frac{|\textbf{y}(t)|^{2}}{2}.
  \end{align*}
  Now by the definition of $Fu,$ it is clear that
  \begin{align*} 
  |Fu(t)| \leqslant C |u(t)|_{M}.
  \end{align*}
  Therefore,
 \begin{align*}
 \frac{1}{2}\frac{d}{dt}|\textbf{y}(t)|^{2}+|A^{\frac{1}{2}}\textbf{y}(t)|^{2}
 +&\int_{0}^{t}\tilde{\beta}(t-s)(A^{\frac{1}{2}}\textbf{y}(s),A^{\frac{1}{2}}\textbf{y}(t))ds\notag\\
 &\qquad\qquad\leqslant C \left(|u(t)|_{M}^{2}+|\textbf{y}(t)|^{2}\right).
 \end{align*}
 Integrating on $(0,t),$ we obtain for all $t\geqslant 0,$
 \begin{align*}
 \frac{1}{2}|\textbf{y}(t)|^{2}- \frac{1}{2}|\textbf{y}(0)|^{2}+\int_{0}^{t}|A^{\frac{1}{2}}\textbf{y}(s)|^{2}ds
& +\int_{0}^{t}\int_{0}^{\tau}\tilde{\beta}(\tau-s)(A^{\frac{1}{2}}\textbf{y}(s),A^{\frac{1}{2}}\textbf{y}(\tau))ds\,d\tau\\
& \leqslant C \int_{0}^{t}\left(|u(\tau)|_{M}^{2}+|\textbf{y}(\tau)|^{2}\right)d\tau\\
& \leqslant C \int_{0}^{\infty}\left(|u(\tau)|_{M}^{2}+|\textbf{y}(\tau)|^{2}\right)d\tau\\
 &\leqslant C |\textbf{y}_{0}|^{2}.
 \end{align*}
Hence, using Lemma $\ref{13.05.15.L1},$
\begin{align*}
|\textbf{y}(t)|^{2}+2\int_{0}^{t}|A^{\frac{1}{2}}\textbf{y}(s)|^{2}ds
\leqslant C |\textbf{y}_{0}|^{2}\quad \mbox{for all}\quad t\geqslant 0.
\end{align*}
Therefore, $\textbf{y} \in L^2(0,\infty;D(A^\frac{1}{2})) \cap L^{\infty}(0,\infty;H).$ Let $ 0 \leqslant \theta \leqslant 1.$
Multiplying \eqref{07.02.16.E2}-\eqref{30.03.16.E1} with $A^{\theta} \textbf{y}(t)$ we obtain,
\begin{align} \label{13.02.16.E99}
\frac{1}{2}\frac{d}{dt}(A^{\theta}\textbf{y}(t),\textbf{y}(t))+(A\textbf{y}(t),A^{\theta}\textbf{y}(t))
 +\int_{0}^{t}\tilde{\beta}(t-s)(A\textbf{y}(s),A^{\theta} \textbf{y}(t))ds \notag\\
 -\gamma(A^{\theta}\textbf{y}(t),\textbf{y}(t))
 =(Fu(t),A^{\theta} \textbf{y}(t)).
 \end{align} Thus,
 \begin{align}\label{30.03.16.E2}
 \frac{1}{2}\frac{d}{dt}|A^{\frac{\theta}{2}} \textbf{y}(t)|^{2}+|A^{\frac{1+\theta}{2}} \textbf{y}(t)|^{2}
 +\int_{0}^{t}\tilde{\beta}(t-s)(A^{\frac{1+\theta}{2}}\textbf{y}(s),A^{\frac{1+\theta}{2}} \textbf{y}(t))ds \notag\\
 \leqslant \gamma|A^{\frac{\theta}{2}} \textbf{y}(t)|^{2}+|Fu(t)||A^{\theta} \textbf{y}(t)|.
 \end{align}
 As $A$ is self-adjoint and $\frac{\theta}{2}\leqslant  \theta \leqslant \frac{1+\theta}{2},$
 using the interpolation inequality Lemma \ref{15.02.16.L1}, we get
\begin{align}\label{PIDE1200}
|A^{\theta} \textbf{y}| &\leqslant |A^{\frac{1+\theta}{2}} \textbf{y}|^{\theta}|A^{\frac{\theta}{2}} \textbf{y}|^{1-\theta}
\leqslant \theta |A^{\frac{1+\theta}{2}} \textbf{y}|+(1-\theta)|A^{\frac{\theta}{2}} \textbf{y}|.
\end{align} 
Using $(\ref{PIDE1200})$ we have from $(\ref{30.03.16.E2}),$ 
\begin{align*}
 \frac{1}{2}\frac{d}{dt}|A^{\frac{\theta}{2}} \textbf{y}(t)|^{2}&+|A^{\frac{1+\theta}{2}} \textbf{y}(t)|^{2}
 +\int_{0}^{t}\tilde{\beta}(t-s)(A^{\frac{1+\theta}{2}} \textbf{y}(s),A^{\frac{1+\theta}{2}} \textbf{y}(t))ds\\
 &\leqslant \gamma|A^{\frac{\theta}{2}} \textbf{y}(t)|^{2}
 +\theta|Fu(t)||A^{\frac{1+\theta}{2}} \textbf{y}(t)|+(1-\theta)|Fu(t)||A^{\frac{\theta}{2}} \textbf{y}(t)|
 \end{align*}
Using Young's inequality, we get
\begin{align*}
 \frac{1}{2}\frac{d}{dt}|A^{\frac{\theta}{2}} \textbf{y}(t)|^{2}+|A^{\frac{1+\theta}{2}} \textbf{y}(t)|^{2}
 &+\int_{0}^{t}\tilde{\beta}(t-s)(A^{\frac{1+\theta}{2}} \textbf{y}(s),A^{\frac{1+\theta}{2}} \textbf{y}(t))ds\, \\
 &\leqslant \frac{1}{2}|A^{\frac{1+\theta}{2}} \textbf{y}(t)|^{2}
 +C \left(|u(t)|_{M}^{2}+|A^{\frac{\theta}{2}} \textbf{y}(t)|^{2}\right).
\end{align*}
Now again using the interpolation inequality Lemma $\ref{15.02.16.L1}$ for 
$0 \leqslant \frac{\theta}{2} \leqslant \frac{1}{2},$ we have 
\begin{align} \label{14.02.16.100}
|A^{\frac{\theta}{2}} \textbf{y}| \leqslant \theta |A^{\frac{1}{2}} \textbf{y}|^{2} + (1-\theta)|\textbf{y}|^{2}.
\end{align}
Hence, using $(\ref{14.02.16.100})$ we have,
\begin{align*}
 \frac{1}{2}\frac{d}{dt}|A^{\frac{\theta}{2}} \textbf{y}(t)|^{2}+\frac{1}{2}|A^{\frac{1+\theta}{2}} \textbf{y}(t)|^{2}
 +\int_{0}^{t}\tilde{\beta}(t-s)(A^{\frac{1+\theta}{2}} \textbf{y}(s),A^{\frac{1+\theta}{2}} \textbf{y}(t))ds\, \\
 \leqslant C \left(|u(t)|_{M}^{2}+|A^{\frac{1}{2}} \textbf{y}(t)|^{2}+|\textbf{y}(t)|^{2}\right).
\end{align*}
Integrating on $(0,t)$ and using Lemma $\ref{13.05.15.L1}$ we get, for $0 \leqslant \theta \leqslant 1,$
\begin{align} \label{14.02.16.101}
|A^{\frac{\theta}{2}} \textbf{y}(t)|^{2}+\int_{0}^{t}|A^{\frac{1+\theta}{2}} \textbf{y}(s)|^{2}ds\,
\leqslant C\left(|A^{\frac{\theta}{2}} \textbf{y}_{0}|^{2} + |\textbf{y}_{0}|^{2}\right)
\quad\mbox{for all}\quad t \geqslant 0.
\end{align}
Using Young's inequality, 
 from $(\ref{13.02.16.E99})$ we note that
\begin{align*}
\frac{1}{2}\frac{d}{dt}|A^{\frac{\theta}{2}} \textbf{y}(t)|^{2}&+|A^{\frac{1+\theta}{2}} \textbf{y}(t)|^{2}
 +\int_{0}^{t}\tilde{\beta}(t-s)(A^{\frac{1+\theta}{2}} \textbf{y}(s),A^{\frac{1+\theta}{2}} \textbf{y}(t))ds\,\\
 &\geqslant \gamma|A^{\frac{\theta}{2}} \textbf{y}(t)|^{2}
 -C \left(|Fu(t)||A^{\frac{1+\theta}{2}} \textbf{y}(t)|+|Fu(t)||A^{\frac{\theta}{2}} \textbf{y}(t)|\right)\\
  &\geqslant \gamma|A^{\frac{\theta}{2}} \textbf{y}(t)|^{2}
  -\frac{\gamma}{2}|A^{\frac{\theta}{2}} \textbf{y}(t)|^{2}
  -C\left(|u(t)|_{M}^{2}
 +|A^{\frac{1+\theta}{2}}\textbf{y}(t)|^{2}\right).
\end{align*}
Hence,
\begin{align*}
\frac{1}{2}\frac{d}{dt}|A^{\frac{\theta}{2}} \textbf{y}(t)|^{2} -\frac{\gamma}{2}|A^{\frac{\theta}{2}} \textbf{y}(t)|^{2}
+C|u(t)|_{M}^{2}
+(1+C)|A^{\frac{1+\theta}{2}} \textbf{y}(t))|^{2}\\
+\frac{1}{2}\int_{0}^{t}\tilde{\beta}(t-s)|A^{\frac{1+\theta}{2}} \textbf{y}(t))|^{2}ds
+\frac{1}{2}\int_{0}^{t}\tilde{\beta}(t-s)|A^{\frac{1+\theta}{2}} \textbf{y}(s))|^{2}ds \geqslant 0.
\end{align*}
We note that 
\begin{align}\label{14.02.16.E6}
 \int_{s}^{t}\tilde{\beta}(\tau-s)d\tau =\frac{1}{\delta-\gamma}\left(1-e^{-(\delta-\gamma)(t-s)}\right)\leqslant \frac{1}{\delta-\gamma}
\end{align} and this yields
\begin{align*}
\frac{1}{2}\frac{d}{dt}\{e^{-\gamma t}|A^{\frac{\theta}{2}} \textbf{y}(t)|^{2}\}+C|u(t)|_{M}^{2}
+(1+C+\frac{1}{2( \delta-\gamma)})|A^{\frac{1+\theta}{2}} \textbf{y}(t))|^{2}\\
+\frac{1}{2}\int_{0}^{t}\tilde{\beta}(t-s)||A^{\frac{1+\theta}{2}} \textbf{y}(s))|^{2}ds\geqslant 0.
\end{align*}
Integrating on $(0,t)$ we have,
\begin{align}\label{14.02.16.102}
\frac{1}{2}e^{-\gamma t}|A^{\frac{\theta}{2}} \textbf{y}(t)|^{2}-&\frac{1}{2}|A^{\frac{\theta}{2}} \textbf{y}_{0}|^{2}
+C\int_{0}^{t}|u(\tau)|_{M}^{2}d\tau  \notag\\ 
+&(1+C+\frac{1}{2 \delta})\int_{0}^{t}|A^{\frac{1+\theta}{2}} \textbf{y}(\tau))|^{2} d\tau\notag\\
&+\frac{1}{2}\int_{0}^{t}\int_{0}^{\tau}\tilde{\beta}(t-s)|A^{\frac{1+\theta}{2}} \textbf{y}(s))|^{2}dsd\tau\geqslant 0.
\end{align}
Now using Tonelli's theorem and $(\ref{14.02.16.E6}),$ one can see that
\begin{align*}
\int_{0}^{t}\int_{0}^{\tau}\tilde{\beta}(t-s)|A^{\frac{1+\theta}{2}} \textbf{y}(s)|^{2}ds\,d\tau
&=\int_{0}^{t}\int_{s}^{t}\tilde{\beta}(\tau-s)|A^{\frac{1+\theta}{2}} \textbf{y}(s)|^{2}d\tau\,ds\\
&=\int_{0}^{t}|A^{\frac{1+\theta}{2}} \textbf{y}(s)|^{2}
\left(\int_{s}^{t}\tilde{\beta}(\tau-s)d\tau \right)ds\\
&\leq \frac{1}{\delta-\gamma}\int_{0}^{t}|A^{\frac{1+\theta}{2}} \textbf{y}(s)|^{2}ds.
\end{align*}
From $(\ref{14.02.16.101})$ we see that
\begin{align*}
\limsup_{t \rightarrow \infty}e^{-\gamma t}|A^{\frac{\theta}{2}} \textbf{y}(t)|^{2}
\leq \limsup_{t \rightarrow \infty}e^{-\gamma t}\left(|A^{\frac{\theta}{2}} \textbf{y}_{0}|^{2}+C|\textbf{y}_{0}|^{2}\right)=0.
\end{align*}
Using the above and letting $t \to \infty,$ equation $(\ref{14.02.16.102})$ becomes
\begin{align*}
\frac{1}{2}|A^{\frac{\theta}{2}} \textbf{y}_{0}|^{2}
\leqslant C\int_{0}^{\infty}|u(\tau)|_{M}^{2}d\tau
+(1+C+\frac{1}{\delta-\gamma})\int_{0}^{\infty}|A^{\frac{1+\theta}{2}} \textbf{y}(\tau)|^{2} d\tau.
\end{align*}
This yields that
\begin{align*} 
C_{1}|A^{\frac{\theta}{2}} \textbf{y}_{0}|^{2}
&\leqslant  \int_{0}^{\infty}|A^{\frac{1+\theta}{2}} \textbf{y}(t))|^{2} dt+\int_{0}^{\infty}|u(t)|_{M}^{2}dt\notag\\
&\leqslant C_{2} |A^{\frac{\theta}{2}} \textbf{y}_{0}|^{2}+|\textbf{y}_{0}|^{2}.
\end{align*}
Substituting $\frac{1+\theta}{2}=\alpha,$
we have
\begin{align*}
C_{1}|A^{\alpha-\frac{1}{2}}\textbf{y}_{0}|^2
\leqslant  \varphi(\textbf{y}_{0}) \leqslant C_{2}\left(|A^{\alpha-\frac{1}{2}}\textbf{y}_{0}|^2+|\textbf{y}_{0}|^{2}\right).
\end{align*}
Then, using result of control theory (see Bensoussan et al. \cite{Ben}, page-486, Theorem 3.1), there exists a linear self-adjoint operator 
$R:D(R)\subset H\rightarrow H$ such that $R \in \mathcal{L}(D(A^{\alpha-\frac{1}{2}}),(D(A^{\alpha-\frac{1}{2}}))^{'})$ with
\begin{align} \label{14.02.16.E106}
\varphi(\textbf{y}_{0})=\frac{1}{2}(R\textbf{y}_{0},\textbf{y}_{0}), \quad \mbox{for all}\quad\textbf{y}_{0} 
\in D(A^{\alpha-\frac{1}{2}}).
\end{align}
By the dynamic programming principle (see Barbu \cite{Bar2}, page-190, Theorem 2.1), for each $T >0,$ the solution $(\tilde{\textbf{y}},\tilde{u})$  is the solution to the optimal control problem
\begin{align} \label{14.02.16.E107}
\min\left\lbrace \frac{1}{2}\int_{0}^{T}\left( |A^{\alpha}\textbf{y}(t)|^2
+|\textbf{u}(t)|^{2}\right)dt +\varphi(\textbf{y}(T));\,(\textbf{y},u)
\,\mbox{subject to}\eqref{07.02.16.E2}\mbox{-}\eqref{30.03.16.E1} \right\rbrace.
\end{align}
Hence, by the maximum principle (see Barbu \cite{Bar2}, page 190, Theorem 2.1), we have
\begin{align*}
\tilde{u}(t)=\{(q_{T}(t),B\psi_{i})\}_{i=1}^{M} \quad\mbox{a.e.}\quad t \in (0,T),
\end{align*}
where $q_{T}$ is the solution of dual backward equation
\begin{align*}
q_{T}'(t)-Aq_{T}(t)-\int_{t}^T\tilde{\beta}(s-t)Aq_{T}(s)ds+\gamma q_{T}(t)&
 =A^{2\alpha}\tilde{\textbf{y}}(t),\,\mbox{for all}\,t\in(0,T)\notag\\
 q_{T}(T)&=-R\tilde{\textbf{y}}(T)
\end{align*}
Since $T$ is arbitrary we have 
\begin{align*}
 q_{T}(t)=-R\tilde{\textbf{y}}(t)\quad \mbox{for all}\quad t\geqslant 0
\end{align*}
and therefore
\begin{align*}
\tilde{u}(t)=\{-(R\tilde{\textbf{y}}(t),B\psi_{i})\}_{i=1}^{M} \quad\mbox{a.e.}\quad t \in (0,T).
\end{align*}
Let $\textbf{y}_{0} \in D(A^{2\alpha-1})$ be arbitrary but fixed. First, we will show that $R\textbf{y}_{0}\in H.$
Indeed, using $(\ref{14.02.16.101}), \,(\mbox{for}\,\, \theta\,=\,4\alpha-2),$ we have,
\begin{align}\label{20.02.16.E1}
|A^{2\alpha-1} \tilde{\textbf{y}}(t)|^{2}+\int_{0}^{t}|A^{2\alpha
-\frac{1}{2}} \tilde{\textbf{y}}(s)|^{2}ds\,
\leqslant |A^{2 \alpha-1} \textbf{y}_{0}|^{2} + C_{1}|\textbf{y}_{0}|^{2}
\quad\mbox{for all}\quad t \geqslant 0.
\end{align}
By taking $z(t)=A^{1-2\alpha}q_{T}(t),$ we see that $z(t)$ satisfies 
\begin{align*} 
\frac{dz}{dt}(t)-Az(t)-\int_{t}^{T}\tilde{\beta}(s-t)Az(s)ds+\gamma z(s)=A\textbf{y}^{*}(t),\quad \mbox{in}\quad (0,T]
\end{align*}
and this yields (by multiplying with $A^{4\alpha-2}z$),
\begin{align*}
\frac{1}{2}\frac{d}{dt}|A^{2\alpha-1}z|^{2}-|A^{2\alpha-\frac{1}{2}}z|^{2}
-\int_{t}^{T}\tilde{\beta}(s-t)(Az(s),A^{4\alpha-2}z(t))ds+\gamma|A^{2\alpha-1}z|^{2}\\
=(A\tilde{\textbf{y}},A^{4\alpha-2}z(t))
=(A^{2\alpha-\frac{1}{2}}\tilde{\textbf{y}},A^{2\alpha-\frac{1}{2}}z(t))\\
\geqslant -|A^{2\alpha-\frac{1}{2}}\tilde{\textbf{y}}||A^{2\alpha-\frac{1}{2}}z|.
\end{align*}
Using Young's inequality, we get
\begin{align*}
&\frac{1}{2}\frac{d}{dt}|A^{2\alpha-1}z|^{2}-|A^{2\alpha-\frac{1}{2}}z|^{2}
-\int_{t}^{T}\tilde{\beta}(s-t)(A^{2\alpha-\frac{1}{2}}z(s),A^{2\alpha-\frac{1}{2}}z(t))ds\\
&+\gamma|A^{2\alpha-1}z|^{2}
\geqslant -\frac{1}{2}|A^{2\alpha-\frac{1}{2}}z|^{2}-C|A^{2\alpha-\frac{1}{2}}\tilde{\textbf{y}}|,
\end{align*}
\begin{align*}
\frac{1}{2}\frac{d}{dt}|A^{2\alpha-1}z|^{2}-\frac{1}{2}|A^{2\alpha-\frac{1}{2}}z|^{2}&+\gamma|A^{2\alpha-1}z|^{2}
 +C|A^{2\alpha-\frac{1}{2}}\tilde{\textbf{y}}|\\
&\geqslant \int_{t}^{T}\tilde{\beta}(s-t)(A^{2\alpha-\frac{1}{2}}z(s),A^{2\alpha-\frac{1}{2}}z(t))ds.
\end{align*}
Integrating on $(t,T),$ and using Lemma $\ref{13.05.15.L1}$
\begin{align*}
\frac{1}{2}|A^{2\alpha-1}z(T)|^{2}-&\frac{1}{2}|A^{2\alpha-1}z(t)|^{2}
-\frac{1}{2}\int_{t}^{T}|A^{2\alpha-\frac{1}{2}}z(s)|^{2}ds\\
&+\gamma \int_{t}^{T}|A^{2\alpha-1}z(s)|^{2}ds+C\int_{t}^{T}|A^{2\alpha-\frac{1}{2}}\tilde{\textbf{y}}|^{2}ds\\
&\geqslant \int_{t}^{T} \int_{\tau}^{T}\tilde{\beta}(s-\tau)(A^{2\alpha-\frac{1}{2}}z(s),A^{2\alpha-\frac{1}{2}}z(t))ds\, d\tau
\geqslant 0.
\end{align*}
Hence,
\begin{align*}
|A^{2\alpha-1}z(t)|^{2}&+\int_{t}^{T}|A^{2\alpha-\frac{1}{2}}z(s)|^{2}ds\notag\\
 &\leqslant |A^{2\alpha-1}z(T)|^{2} 
  + C\int_{t}^{T}\left(|A^{2\alpha-\frac{1}{2}}\tilde{\textbf{y}}(s)|^{2}+|A^{2\alpha-1}z(s)|^{2}\right)ds.
\end{align*}
Using Gronwall's inequality (Lemma $\ref{07.03.16.E1}$) and using $(\ref{20.02.16.E1})$ we have,
\begin{align*}
&|A^{2\alpha-1}z(t)|^{2}+\int_{t}^{T}|A^{2\alpha-\frac{1}{2}}z(s)|^{2}ds\\
&\qquad\leqslant \left(|A^{2\alpha-1}z(T)|^{2} 
+ C\int_{t}^{T}|A^{2\alpha-\frac{1}{2}}\tilde{\textbf{y}}(s)|^{2}ds\right)e^{C(T-t)},\\
&\qquad\leqslant C e^{CT}\left(|q_{T}(T)|^{2}+ \int_{0}^{\infty}|A^{2\alpha-\frac{1}{2}}\tilde{\textbf{y}}(s)|^{2}ds \right),\\
&\qquad\leqslant   Ce^{CT} \left(|q_{T}(T)|^{2}+|A^{2\alpha-1}\textbf{y}_{0}|^{2}+|\textbf{y}_{0}|^{2} \right).
\end{align*}
Hence, 
\begin{align*}
z \, \in L^{\infty}(0,T;D(A^{2\alpha-1}))\cap C([0,T];H).
\end{align*}
Thus, $z: [0,T]\rightarrow D(A^{2\alpha-1})$ is weakly continuous and therefore, $q_{T} \in C_{W}([0,T];H),$ space of weakly continuous functions. 
This shows that $q_{T}(0) \in H$ which implies $R\textbf{y}_{0}=-q_{T}(0)\in H.$
From the assumptions on $A$ we have the following inclusion 
\begin{align} \label{20.02.16.E3}
  D(A^{2\alpha-1})\subset D(A^{\alpha-\frac{1}{2}})\subset H \simeq H^{'}\subset(D(A^{\alpha-\frac{1}{2}}))^{'}.
\end{align} 
Let $$G(R)=\{(\textbf{y},R\textbf{y}):\textbf{y} \in  D(A^{2\alpha-1})\}\, \subset D(A^{2\alpha-1})\times H$$ 
be the graph of $R$ in $ D(A^{2\alpha-1})\times H.$
In order to prove $(\ref{14.01.15.E1}),$ it is enough to show that $G(R)$ is closed in $ D(A^{2\alpha-1})\times H.$
Let $\{\textbf{y}_{n}\}_{n \geq 1} \subset  D(A^{2\alpha-1})$ be such that 
\begin{align}\label{20.02.16.E4}
 \textbf{y}_{n} \to\tilde{y}\quad\mbox{in}\quad D(A^{2\alpha-1}),
\quad\mbox{and}\quad R\textbf{y}_{n} \to z\quad\mbox{in} \quad H,\quad n\to\infty. 
\end{align}
We will show that $R\tilde{y}=z.$ Using $(\ref{20.02.16.E3}),$ from $(\ref{20.02.16.E4})$ we have
\begin{align*}
 \textbf{y}_{n} \to\tilde{y}\quad\mbox{in}\quad D(A^{\alpha-\frac{1}{2}}),
\quad\mbox{and}\quad R\textbf{y}_{n} \to z\quad\mbox{in} \quad D(A^{\alpha-\frac{1}{2}})',\quad n\to\infty. 
\end{align*}
Also, we know from the existence of $R$ that
$R \in \mathcal{L}\left(D(A^{\alpha-\frac{1}{2}}),(D(A^{\alpha-\frac{1}{2}}))^{'}\right).$ 
So,
\begin{align*}
 \quad R\textbf{y}_{n} \to R\tilde{y}\quad\mbox{in} \quad D(A^{\alpha-\frac{1}{2}})',\quad n\to\infty. 
\end{align*}
Now by the uniqueness of limit, $R\tilde{y}=z.$ By Closed graph Theorem, $R\in \mathcal{L}(D(A^{\alpha-\frac{1}{2}}),H)$ and hence $(\ref{14.01.15.E1})$ is proved.
Now we will show that $R$ is a solution to Riccati equation $(\ref{14.01.15.E2}).$ 
Again by $(\ref{14.02.16.E107})$, we have,
\begin{align} \label{14.02.16.E109}
\varphi(\tilde{\textbf{y}}(t))=\frac{1}{2}\int_{t}^{\infty}\left( |A^{\alpha}\tilde{\textbf{y}}(s)|^2
+|\tilde{u}(s)|_{M}^{2}\right)ds,\quad \mbox{for all}\quad t \geqslant 0.
\end{align}
From basic calculus, equation $(\ref{14.02.16.E106})$ gives us
\begin{align*}
 \nabla \phi(y)\cdot h=(R(y),h)\quad\mbox{for all}\quad y,h\in D(R).
\end{align*}
Therefore,
\begin{align*}
 \frac{d}{dt}\varphi(\tilde{\textbf{y}}(t))=\left(R\tilde{\textbf{y}}(t),\frac{d\tilde{\textbf{y}}}{dt}(t)\right).
\end{align*}
Using $(\ref{14.02.16.E109}),$ we have that
\begin{align*}
\left(R\tilde{\textbf{y}}(t),\frac{d\tilde{\textbf{y}}}{dt}(t)\right)
=-\frac{1}{2} |A^{\alpha}\tilde{\textbf{y}}(t)|^2
-\frac{1}{2}\sum_{i=1}^{M}(R\tilde{\textbf{y}}(t),B\psi_{i})^{2},\quad \forall \, t\geqslant 0.
\end{align*}
Using \eqref{07.02.16.E2}-\eqref{30.03.16.E1}, we have also for all $t\geqslant 0,$
\begin{align*}
\left(R\tilde{\textbf{y}}(t),\frac{d\tilde{\textbf{y}}}{dt}(t)\right)
=-(R\tilde{\textbf{y}}(t), A \tilde{\textbf{y}}+\int_{0}^{t}\tilde{\beta}(t-s)A\tilde{\textbf{y}}(s)
 \,ds-\gamma\tilde{\textbf{y}}(t)-F\tilde{u}(t)).
\end{align*}
These yield for all $t\geqslant 0$
\begin{align*}
-(R\tilde{\textbf{y}}(t), A \tilde{\textbf{y}}+&\int_{0}^{t}\tilde{\beta}(t-s)A\tilde{\textbf{y}}(s)
 \,ds-\gamma\tilde{\textbf{y}}(t))+(F(\tilde{u}(t)),R\tilde{\textbf{y}}(t))\notag\\
 &+\frac{1}{2}\sum_{i=1}^{M}(R\tilde{\textbf{y}}(t),B\psi_{i})^{2}
 +\frac{1}{2} |A^{\alpha}\tilde{\textbf{y}}(t)|^2=0,
\end{align*}
\begin{align*}
 -(R\tilde{\textbf{y}}(t), A \tilde{\textbf{y}}+&\int_{0}^{t}\tilde{\beta}(t-s)A\tilde{\textbf{y}}(s)
 \,ds-\gamma\tilde{\textbf{y}}(t))\notag\\
 &-\frac{1}{2}\sum_{i=1}^{M}(R\tilde{\textbf{y}}(t),B\psi_{i})^{2}
 +\frac{1}{2} |A^{\alpha}\tilde{\textbf{y}}(t)|^2=0.
\end{align*}
Hence, for all $t\geqslant 0,$
\begin{align} \label{14.02.16.E110}
(R\tilde{\textbf{y}}(t), A \tilde{\textbf{y}}+&\int_{0}^{t}\tilde{\beta}(t-s)A\tilde{\textbf{y}}(s)
 \,ds-\gamma\tilde{\textbf{y}}(t))\notag\\
 &+\frac{1}{2}\sum_{i=1}^{M}(R\tilde{\textbf{y}}(t),B\psi_{i})^{2}
 =\frac{1}{2} |A^{\alpha}\tilde{\textbf{y}}(t)|^2,
\end{align}
which  directly implies $(\ref{14.01.15.E2}).$ Let $\tilde{\textbf{y}}^{*}=e^{-\gamma t}\tilde{\textbf{y}}$ 
and $\tilde{u}^{*}=e^{-\gamma t}\tilde{u}.$ 
As $(\tilde{\textbf{y}},\tilde{u})$ satisfy \eqref{07.02.16.E2}-\eqref{30.03.16.E1}, then $(\tilde{\textbf{y}}^{*},\tilde{u}^{*})$ satisfy 
\begin{align*}
 \frac{d\tilde{\textbf{y}}^*}{dt}+ A \tilde{\textbf{y}}^*(t)+\int_{0}^{t}\beta(t-s)A\tilde{\textbf{y}}^*(s)
 \,ds+\sum_{i=1}^{M}(R\tilde{\textbf{y}}^*(t),B\psi_{i})B\psi_{i}=0,t\geqslant 0.
\end{align*}
Multiplying the closed loop system \eqref{07.02.16.E2}-\eqref{30.03.16.E1}
\begin{align} \label{14.02.16.E111}
 \frac{d\tilde{\textbf{y}}}{dt}+ A \tilde{\textbf{y}}(t)+\int_{0}^{t}\tilde{\beta}(t-s)A\tilde{\textbf{y}}(s)
 \,ds-\gamma\tilde{\textbf{y}}(t)+\sum_{i=1}^{M}(R\tilde{\textbf{y}}(t),B\psi_{i})B\psi_{i}=0,t\geqslant 0.
\end{align}
by $R\tilde{\textbf{y}}(t)$ and using $(\ref{14.02.16.E110})$ we get,
\begin{align} \label{14.02.16.E112}
\frac{1}{2}\frac{d}{dt}(R\tilde{\textbf{y}}(t),\tilde{\textbf{y}}(t))
+\frac{1}{2} |A^{\alpha}\tilde{\textbf{y}}(t)|^2
+\frac{1}{2}\sum_{i=1}^{M}(R\tilde{\textbf{y}}(t),B\psi_{i})^{2}=0,\, \forall\, t \geqslant 0.
\end{align}
Since the second and third terms in the above equation are positive, we have
\begin{align*}
\frac{d}{dt}\left((R\tilde{\textbf{y}}(t),\tilde{\textbf{y}}(t))\right)
\leq 0.
\end{align*}
On integration we have,
\begin{align*}
(R\tilde{\textbf{y}}(t),\tilde{\textbf{y}}(t))\leq (R\textbf{y}_{0},\textbf{y}_{0}).
\end{align*}
Hence, using $(\ref{26.12.15.E5}),$ we have
\begin{align*}
|A^{\alpha-\frac{1}{2}}\tilde{\textbf{y}}^*|^2\leq  Ce^{-2\gamma t}|A^{\alpha-\frac{1}{2}}\textbf{y}_0|^2.
\end{align*}
Now, using $(\ref{26.12.15.E5}),$ we have $(\ref{14.02.16.E113}).$
For the other inequality, we note that
\begin{align*}
\frac{d}{dt}\left(R\tilde{\textbf{y}}(t),\tilde{\textbf{y}}(t)\right)
+|A^{\alpha}\tilde{\textbf{y}}(t)|^2 \leq 0,
\end{align*}
which again on integration yields,
\begin{align*}
(R\tilde{\textbf{y}},\tilde{\textbf{y}})-(R\textbf{y}_{0},\textbf{y}_{0})
+\int_{0}^{t}|A^{\alpha}\tilde{\textbf{y}}(s)|^2ds \leq 0,
\end{align*}
using the positivity property of the first term and $(\ref{26.12.15.E5})$ we have
\begin{align*}
\int_{0}^{t}e^{2\gamma s}|A^{\alpha}\tilde{\textbf{y}}^*(s)|^2ds \leq (R\textbf{y}_{0},\textbf{y}_{0})
\leq a_{2} |A^{\alpha-\frac{1}{2}}\textbf{y}_0|^2 .
\end{align*}
Now letting $t\rightarrow \infty,$ by monotone convergence theorem, we have $(\ref{14.01.15.E3}).$

\end{proof}


 \begin{thm} \label{03.04.16.T1}
 Let $\alpha \in [0,\frac{1}{2}).$ Then for each $\gamma,\,0<\gamma<\delta $ and $M$ (as in $(\ref{03.03.16.E1})$),
 there is a bounded linear self-adjoint positive semidefinite operator $\tilde{R}:H\rightarrow H$ such that $\tilde{R}\in\mathcal{L}(H,D(A))$  
and $\tilde{R}$ satisfies the following algebraic Riccati equation:
\begin{align*}
\left( A \textbf{y}+\int_{0}^{t}\tilde{\beta}(t-s)A\textbf{y}(s)
 \,ds-\gamma\textbf{y},\tilde{R}\textbf{y}\right)+\frac{1}{2}\sum_{i=1}^{M}\left(B\psi_{i},\tilde{R}\textbf{y}\right)^{2}
 =\frac{1}{2}|A^{\alpha}\textbf{y}|^2 \\
 \forall \, \textbf{y} \in D(A), 
\end{align*}
where $\tilde{\beta}(t)=e^{-(\delta-\gamma)t}.$
Moreover the feedback controller
\begin{align*} 
u^{*}(t)=-\sum_{j=1}^{M}(\tilde{R}\textbf{y}(t),B\psi_{j})\psi_{j}
\end{align*}
exponentially stabilizes the linear system
\begin{align*} 
 \frac{d\textbf{y}}{dt}+ A \textbf{y}(t)+\int_{0}^{t}\beta(t-s)A\textbf{y}(s)
 \,ds&=u^*(t),t\geqslant 0.\\
 \textbf{y}(0)&=\textbf{y}_0
\end{align*}
that is, the solution $\textbf{y}^*$ to the corresponding closed loop system satisfies
\begin{align*} 
|\textbf{y}^*(t)| &\leqslant C\e^{-\gamma t}|\textbf{y}_0|.
\end{align*}
\end{thm}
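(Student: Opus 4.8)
The plan is to follow the proof of Theorem \ref{26.12.15.T1} line by line, the situation now being \emph{simpler} because for $\alpha\in[0,\tfrac12)$ the observation $A^{\alpha}$ is subordinate to $A^{1/2}$ (so that $D(A^{1/2})\hookrightarrow D(A^{\alpha})\hookrightarrow H$ and $|A^{\alpha}\mathbf{y}|\leq C(|\mathbf{y}|+|A^{1/2}\mathbf{y}|)$ by Lemma \ref{15.02.16.L1}) and $A^{\alpha-\frac12}$ is a \emph{bounded} operator on $H$. First I would associate to the shifted controlled system \eqref{07.02.16.E2}--\eqref{30.03.16.E1} the linear--quadratic functional
\[
\varphi(\mathbf{y}_0)=\min\Big\{\tfrac12\int_0^{\infty}\big(|A^{\alpha}\mathbf{y}(t)|^2+|u(t)|_M^2\big)\,dt\ :\ u\in L^2(0,\infty;\mathbb{R}^M)\Big\}.
\]
An admissible pair is supplied, as in the proof of Theorem \ref{26.12.15.T1}, by Theorem \ref{15.02.16.T10}: if $(\mathbf{y}^{*},u^{*})$ stabilizes the unshifted system with rate $\gamma$, then $(\mathbf{y},u)=(\e^{\gamma t}\mathbf{y}^{*},\e^{\gamma t}u^{*})$ solves the shifted system and, by the subordination bound above together with the exponential decay, has finite cost; hence $\varphi(\mathbf{y}_0)<\infty$.

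Next I would establish the a priori estimates, which are word for word those of Theorem \ref{26.12.15.T1}. Testing the shifted equation with $\mathbf{y}$, treating the memory term via Lemma \ref{13.05.15.L1}, and using $|\mathbf{y}|\leq\|A^{-\alpha}\|_{\mathcal L(H)}\,|A^{\alpha}\mathbf{y}|$ (legitimate since $\sigma(A)\subset[\lambda_1,\infty)$ with $\lambda_1>0$) to bound the zeroth order term by the running cost, one gets $\mathbf{y}\in L^{\infty}(0,\infty;H)\cap L^2(0,\infty;D(A^{1/2}))$ with the bound $\leq C|\mathbf{y}_0|^2$; testing with $A^{2\alpha}\mathbf{y}$ and interpolating as in \eqref{14.02.16.101} (with $\theta=2\alpha$) gives in addition $|A^{\alpha}\mathbf{y}(t)|^2+\int_0^t|A^{\alpha+\frac12}\mathbf{y}(s)|^2\,ds\leq C(|A^{\alpha}\mathbf{y}_0|^2+|\mathbf{y}_0|^2)$. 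Since $A^{\alpha-\frac12}$ is bounded, the first estimate shows $\varphi(\mathbf{y}_0)\leq C|\mathbf{y}_0|^2$, so the representation theorem of Bensoussan et al. \cite{Ben} yields a self-adjoint operator $\tilde R$ with $\varphi(\mathbf{y}_0)=\tfrac12(\tilde R\mathbf{y}_0,\mathbf{y}_0)$; the quadratic bound forces $\tilde R\in\mathcal L(H)$, and $\varphi\geq0$ makes it positive semidefinite.

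The rest is a transcription of the $\alpha\in[\tfrac12,\tfrac34]$ argument. By dynamic programming and the maximum principle (Barbu \cite{Bar2}) the optimal control is $\tilde u(t)=\{-(\tilde R\tilde{\mathbf{y}}(t),B\psi_i)\}_{i=1}^{M}$, with adjoint $q_T(t)=-\tilde R\tilde{\mathbf{y}}(t)$ solving the dual backward equation with source $A^{2\alpha}\tilde{\mathbf{y}}$; differentiating $t\mapsto\varphi(\tilde{\mathbf{y}}(t))$ and inserting the state equation produces the algebraic Riccati equation exactly as in \eqref{14.02.16.E110}. Multiplying the closed-loop equation by $\tilde R\tilde{\mathbf{y}}$, integrating, and using the positive semidefiniteness of $\tilde R$ gives $(\tilde R\tilde{\mathbf{y}}(t),\tilde{\mathbf{y}}(t))\leq(\tilde R\mathbf{y}_0,\mathbf{y}_0)$, while the a priori bound gives $|\tilde{\mathbf{y}}(t)|\leq C|\mathbf{y}_0|$; hence $|\mathbf{y}^{*}(t)|=\e^{-\gamma t}|\tilde{\mathbf{y}}(t)|\leq C\e^{-\gamma t}|\mathbf{y}_0|$. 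For the smoothing property $\tilde R\in\mathcal L(H,D(A))$ I would bootstrap on the adjoint equation: multiplying it by $A^{2}q_T$, controlling the memory term with Lemma \ref{13.05.15.L1}, and closing via Gronwall (Theorem \ref{07.03.16.E1}) using the regularity $\tilde{\mathbf{y}}\in L^2(0,T;D(A^{\alpha+\frac12}))$ and $2\alpha<1$, one obtains $q_T\in L^{\infty}(0,T;D(A))\cap C([0,T];H)$, so that $q_T(0)=-\tilde R\mathbf{y}_0\in D(A)$, and the closed graph theorem upgrades this to $\tilde R\in\mathcal L(H,D(A))$.

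The main obstacle is precisely this last step. The energy estimate for $A^{2}q_T$ needs $A^{\alpha+\frac12}\tilde{\mathbf{y}}\in L^2$, which the a priori estimates provide with a constant proportional to $|A^{\alpha}\mathbf{y}_0|$, whereas $\tilde R\in\mathcal L(H,D(A))$ must hold for \emph{every} $\mathbf{y}_0\in H$. The way around it is to first exploit the interior smoothing of the dissipative adjoint dynamics to gain the required regularity of $\tilde{\mathbf{y}}$ on intervals $(\varepsilon,T)$ and of $q_T$ near $t=0$, and only then close the bootstrap; all remaining computations are routine repetitions of those already carried out for Theorem \ref{26.12.15.T1}.
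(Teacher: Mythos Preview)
Your proposal is correct and follows the same approach as the paper. The paper's own proof is a two-line sketch: one derives the basic energy estimate \eqref{03.04.16.E8} exactly as in Theorem~\ref{26.12.15.T1}, then invokes the interpolation inequality $|A^{\alpha}\mathbf{y}|^{2}\leq 2\alpha|A^{1/2}\mathbf{y}|^{2}+(1-2\alpha)|\mathbf{y}|^{2}$ (valid precisely because $\alpha<\tfrac12$) to conclude $\varphi(\mathbf{y}_0)\leq C|\mathbf{y}_0|^2$, and declares that ``rest of the proof will follow as in Theorem~\ref{26.12.15.T1}, with minor modifications.'' You identify the same key simplification (subordination of $A^{\alpha}$ to $A^{1/2}$) and in fact supply more detail than the paper does --- in particular you explicitly discuss the smoothing claim $\tilde R\in\mathcal{L}(H,D(A))$ via the adjoint bootstrap and the derivation of the decay $|\mathbf{y}^{*}(t)|\leq C\e^{-\gamma t}|\mathbf{y}_0|$ from the uniform $L^{\infty}$ bound on the shifted optimal trajectory, neither of which the paper spells out.
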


 \begin{proof}
Let $\alpha \in [0,\frac{1}{2}).$ Consider the optimization problem
\begin{align*}
\varphi(\textbf{y}_{0})=\min\left\{\frac{1}{2}\int_{0}^{\infty}\left( |A^{\alpha}\textbf{y}(t)|^2
+|u(t)|_{M}^{2}\right)dt \right\}
\end{align*}
 subject to $u \in L^{2}(0,\infty;\mathbb{R}^{M})$ and 
\begin{align*} 
\frac{d\textbf{y}}{dt}+ A \textbf{y}+\int_{0}^{t}\tilde{\beta}(t-s)A\textbf{y}(s)
 \,ds-\gamma\textbf{y}&=\sum_{i=1}^{M}B\psi_{i}u_{i}(t) \\
 \textbf{y}(0)&=\textbf{y}_{0}.\notag
\end{align*}
Proceeding in the similar manner as in the proof of previous theorem, we obtain
\begin{align}\label{03.04.16.E8}
|\textbf{y}(t)|^{2}+2\int_{0}^{t}|A^{\frac{1}{2}}\textbf{y}(s)|^{2}ds
\leqslant C_{2}|\textbf{y}_{0}|^{2}\quad \mbox{for all}\quad t\geqslant 0.
\end{align}
Now due to the interpolation inequality (Lemma \ref{15.02.16.L1}) for 
$0 \leqslant \alpha \leqslant \frac{1}{2},$ we have 
\begin{align} \label{03.04.16.E9}
|A^{\alpha} \textbf{y}|^2 \leqslant 2\alpha |A^{\frac{1}{2}} \textbf{y}|^{2} + (1-2\alpha)|\textbf{y}|^{2}.
\end{align}
Using $(\ref{03.04.16.E8})$ and $(\ref{03.04.16.E9})$ we obtain,
\begin{align*}
 \int_{0}^{t}|A^{\alpha} \textbf{y}|^2 \leqslant \int_{0}^{t} 2\alpha |A^{\frac{1}{2}} \textbf{y}|^{2} 
 +\int_{0}^{t} (1-2\alpha)|\textbf{y}|^{2}\leqslant C |\textbf{y}_0|^2,
\end{align*}
which finally yields
\begin{align*}
 \varphi(\textbf{y}_0)\leqslant C |\textbf{y}_0|^2.
\end{align*}
Rest of the proof will follow as in Theorem \ref{26.12.15.T1}, with minor modifications from place to place.
 \end{proof}

\section{Stabilization under nonzero forcing field} 
We report in this section the stabilization of the distributed control problem
 with the non-zero forcing term $\textbf{f} \in H^{1}((0,T);H),\quad \forall\,\, T>0.$
\\Let us consider the system
\begin{align} \label{PIDE100}
\frac{d\textbf{y}}{dt}+ A \textbf{y}(t)+\int_{0}^{t}\beta(t-s)A\textbf{y}(s)
 \,ds&=\textbf{f},\quad\mbox{for all}\quad t > 0,\\
 \beta^{'}(t)+ \delta \beta(t)&=0,\quad\mbox{for all}\quad t > 0,\label{PIDE111}
\end{align}
with initial condition
\begin{align}\label{PIDE12000}
 \textbf{y}(0)=\textbf{y}_{0},\quad \beta(0)=1. \quad (\delta > 0)
\end{align}
with $\textbf{y}_{0} \in H$
where $\,\textbf{f}\,$ is a given forcing field.
Assume $\textbf{f}\in H^{1}((0,T);H),\,\forall\,T>0.$\\
Using the well-posedness theory mentioned in Section 2, we conclude the existence of a unique solution 
$\textbf{y}\in C^1([0,\infty), H)$
of the above system \eqref{PIDE100} -\eqref{PIDE12000}. Exploiting definition of equilibrium solution, it yields,
 $\textbf{y}_{e} \in D(A)$ is a solution to the steady state equation
\begin{align*} 
(1+\frac{1}{\delta})A\textbf{y}_{e}=\textbf{f}_{e}
\end{align*}  
where $\textbf{f}_{e}=\lim_{t \rightarrow \infty} \textbf{f}(t).$
It is convenient to reduce the stabilization problem around $\textbf{y}_{e}$ to that of 
zero solution by setting $\textbf{y}-\textbf{y}_{e}\Longrightarrow \textbf{y}$ and so, 
to transform \eqref{PIDE100}-\eqref{PIDE12000} into
\begin{align} \label{27.02.16.E2}
\frac{d\textbf{y}}{dt}+ A \textbf{y}(t)+\int_{0}^{t}\beta(t-s)A\textbf{y}(s)
 \,ds&=g(t)+\textbf{f}(t)-\textbf{f}_{e} \quad\mbox{for all}\quad t > 0,\\
 \beta^{'}(t)+ \delta \beta(t)&=0,\quad\mbox{for all}\quad t > 0\\\label{28.03.16.E1}
 \textbf{y}(0)=\textbf{y}_{0}-\textbf{y}_{e}&=\tilde{\textbf{y}}_{0},\\\label{28.03.16.E2}
 \beta(0)&=1, \quad (\delta > 0)
\end{align}
with $g(t)=\frac{1}{\delta}A\textbf{y}_{e}e^{-\delta t}$ for all $t>0.$\\
Consider the following translated control system associated to \eqref{27.02.16.E2}-\eqref{28.03.16.E2} :
\begin{align} \label{27.02.16.E3}
\frac{d\textbf{y}}{dt}+ A \textbf{y}(t)+\int_{0}^{t}\beta(t-s)A\textbf{y}(s)
 \,ds&=g(t)+\textbf{f}(t)-\textbf{f}_{e}+B\tilde{u}(t) \quad\mbox{for all}\quad t > 0,\\\label{28.03.16.E4}
 \beta^{'}(t)+ \delta \beta(t)&=0,\quad\mbox{for all}\quad t > 0\\
 \textbf{y}(0)=\textbf{y}_{0}-\textbf{y}_{e}&=\tilde{\textbf{y}}_{0},\\\label{28.03.16.E5}
 \beta(0)&=1. \quad (\delta > 0)
\end{align}
where $\tilde{u}:[0,\infty)\longrightarrow U$ represents a control and 
$B$ is a bounded linear operator from the control space $U$ to $H$.
\begin{cor}\label{28.02.16.C1}
 Assume that $\mathbf{f}:[0,\infty)\to R(B)\subset H.$ Let the rank assumptions $\eqref{14.02.16.E11}$ for semisimple eigenvalues and $(\ref{22.02.16.E3})$ for non-semisimple eigenvalues hold. Then there exists a controller 
which exponentially stabilizes  system \eqref{27.02.16.E3}-\eqref{28.03.16.E5}. Consequently all the conclusions of Theorem \ref{28.02.16.T1} hold true.

\end{cor}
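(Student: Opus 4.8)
The plan is to reduce the forced translated system \eqref{27.02.16.E3}--\eqref{28.03.16.E5} to the homogeneous controlled system \eqref{PIDE2}--\eqref{PIDE4}, by absorbing the whole inhomogeneity $g(t)+\mathbf{f}(t)-\mathbf{f}_{e}$ into the control, and then invoking Theorem \ref{28.02.16.T1} verbatim.

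First I would verify that the inhomogeneity takes values in $R(B)$. By hypothesis $\mathbf{f}(t)\in R(B)$ for every $t\geqslant 0$, and hence $\mathbf{f}_{e}=\lim_{t\to\infty}\mathbf{f}(t)\in\overline{R(B)}$; from the steady-state identity $(1+\tfrac{1}{\delta})A\mathbf{y}_{e}=\mathbf{f}_{e}$ one gets $A\mathbf{y}_{e}=\tfrac{\delta}{1+\delta}\mathbf{f}_{e}$, so that $g(t)=\tfrac{1}{\delta}A\mathbf{y}_{e}\,e^{-\delta t}\in\overline{R(B)}$ as well. Assuming, as is the case in the applications of Section 7, that $B$ admits a bounded right inverse $B^{\dagger}\in\mathcal{L}(R(B),U)$ on its range (in particular when $R(B)$ is closed), I would set
\begin{align*}
u_{0}(t):=-B^{\dagger}\big(g(t)+\mathbf{f}(t)-\mathbf{f}_{e}\big),\qquad t\geqslant 0,
\end{align*}
which satisfies $Bu_{0}(t)=-(g(t)+\mathbf{f}(t)-\mathbf{f}_{e})$ and inherits the regularity $u_{0}\in H^{1}((0,T);U)$ for every $T>0$ from the smoothness and exponential decay of $g$ together with $\mathbf{f}\in H^{1}((0,T);H)$.

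Next I would write the control in \eqref{27.02.16.E3} as $\tilde{u}(t)=u(t)+u_{0}(t)$. Since $Bu_{0}$ cancels $g+\mathbf{f}-\mathbf{f}_{e}$, the translated system \eqref{27.02.16.E3}--\eqref{28.03.16.E5} becomes
\begin{align*}
\frac{d\mathbf{y}}{dt}+A\mathbf{y}(t)+\int_{0}^{t}\beta(t-s)A\mathbf{y}(s)\,ds&=Bu(t),\qquad \beta'(t)+\delta\beta(t)=0,\\
\mathbf{y}(0)&=\tilde{\mathbf{y}}_{0},\qquad \beta(0)=1,
\end{align*}
which is exactly \eqref{PIDE2}--\eqref{PIDE4} with initial datum $\tilde{\mathbf{y}}_{0}$; by uniqueness of solutions the two formulations describe the same trajectory. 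Under the rank conditions \eqref{14.02.16.E11} (semisimple case) and \eqref{22.02.16.E3} (non-semisimple case), Theorem \ref{28.02.16.T1} supplies a finite-dimensional controller $u(t)=\sum_{i=1}^{M}\Phi_{i}v_{i}(t)$ with $v=\{v_{i}\}_{i=1}^{M}$ having the regularity and decay stated there, for which the corresponding solution $\mathbf{y}\in C^{1}([0,\infty);\tilde{H})$ obeys $|\mathbf{y}(t)|_{\tilde{H}}\leqslant C\,\e^{-\gamma t}|\tilde{\mathbf{y}}_{0}|_{\tilde{H}}$. Hence $\tilde{u}=u+u_{0}$ exponentially stabilizes \eqref{27.02.16.E3}--\eqref{28.03.16.E5}, and since $\mathbf{y}$ here is the original state shifted by $\mathbf{y}_{e}$, all conclusions of Theorem \ref{28.02.16.T1} transfer to the present situation.

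The only delicate point is the construction of the selection $u_{0}$ with $Bu_{0}=-(g+\mathbf{f}-\mathbf{f}_{e})$ having the needed regularity; this is precisely where the hypothesis $\mathbf{f}:[0,\infty)\to R(B)$ is used, and it hinges on $B$ having a bounded right inverse on its range (or, more generally, on the existence of a suitable bounded measurable selection). Once $u_{0}$ is at hand the corollary is an immediate consequence of Theorem \ref{28.02.16.T1}, with no further estimates required.
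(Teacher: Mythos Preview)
Your proposal is correct and follows essentially the same route as the paper: absorb the entire inhomogeneity $g(t)+\mathbf{f}(t)-\mathbf{f}_{e}$ into the control via the assumption $\mathbf{f}(t)\in R(B)$, reduce to the homogeneous system \eqref{PIDE2}--\eqref{PIDE4}, and invoke Theorem \ref{28.02.16.T1}. The paper writes the preimages explicitly (picking $\tilde{\mathbf f}$, $\tilde{\tilde{\mathbf f}}$ with $B\tilde{\mathbf f}=\mathbf f$, $B\tilde{\tilde{\mathbf f}}=\mathbf f_{e}$ and noting $g(t)=\tfrac{e^{-\delta t}}{1+\delta}\mathbf f_{e}$), whereas you package this as a right inverse $B^{\dagger}$; your added caveat that one needs $\mathbf f_{e}\in R(B)$ and a selection with the requisite regularity is a point the paper simply assumes without comment.
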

\begin{proof}
Let $\mathbf{\tilde{f}}:[0,\infty)\to U$ be such that 
 $B\mathbf{\tilde{f}}(t)=\mathbf{f}(t)$ for all $t\geqslant 0$ and 
 there exists $\mathbf{\tilde{\tilde{f}}}\in U$ such that $B\mathbf{\tilde{\tilde{f}}}=\mathbf{f}_e.$
Note that $g \in$  Range $B.$ For $g_{1}=\frac{e^{- \delta t}}{1+\delta} \tilde{\tilde{\textbf{f}}} \in U $ we have $B(g_{1})=g.$ 
Let us define $$u_1(t)=\tilde{u}(t)+\tilde{\textbf{f}}(t)-\left(1-\frac{e^{-\delta t}}{1+\delta}\right)\tilde{\tilde{\textbf{f}}}
\quad\mbox{for all}\quad t>0.$$
Then, system \eqref{27.02.16.E3}-\eqref{28.03.16.E5} becomes  
\begin{align*}
\frac{d\textbf{y}}{dt}+ A \textbf{y}(t)+\int_{0}^{t}\beta(t-s)A\textbf{y}(s)
 \,ds&=Bu_1(t),\quad\mbox{for all}\quad t > 0,\\
 \beta^{'}(t)+ \delta \beta(t)&=0,\quad\mbox{for all}\quad t > 0\\
 \textbf{y}(0)=\textbf{y}_{0}-\textbf{y}_{e}&=\tilde{\textbf{y}_{0}},\\
 \beta(0)&=1. \quad (\delta > 0)
\end{align*}
Now we can apply the Theorem \ref{28.02.16.T1} to conclude the result.
\end{proof}
\begin{rem}
 We note that similar results as in Theorem \ref{26.12.15.T1} and Theorem \ref{03.04.16.T1} hold for the system \eqref{PIDE100}-\eqref{PIDE12000}.
\end{rem}

In the following Section, we will provide applications to viscoelastic fluids of the abstract theory developed in this paper.
\section{Applications to viscoelastic fluids}
\subsection{ Application to  Oldroyd fluid}
Viscoelastic fluids are the kind of fluids which exhibit both viscous and elastic characteristics while undergoing strain. It is known for a long time that such fluids are non-Newtonian in nature and have memory property. One of the most well-known linear viscoelastic fluid model was proposed by J. G. Oldroyd \cite{Old} and is known as Oldroyd-B fluid. This model encompasses majority of viscous, incompressible, non-Newtonian fluids encountered in practice with flows of moderate velocities. For further details of the physical background and mathematical modelling, we refer to Pani et al. \cite{Pani}\cite{Gos}, Joseph \cite{Jos}, Oldroyd \cite{Old} and  references therein.\\
The focus of this Section is to concentrate on the two dimensional Oldroyd model with zero forcing term in a bounded domain $\,O\,$ in $\,\mathbb{R}^{2}\,$ with smooth boundary $\partial O.$ 
We denote the velocity field by 
$\,\textbf{y}\,$ and the pressure field by $\,p.\,$ The system of equations of motion arising in the Oldroyd fluids of order one is:
\begin{align} \label{25.04.15.E1}
 \partial_{t}\textbf{y}+(\textbf{y}\cdot\nabla)\textbf{y}-\mu\Delta\textbf{y}-\int_{0}^{t}\beta(t-\tau)\Delta\textbf{y}(x,\tau)
 \,d\tau+\nabla p = 0 \qquad \mbox{in}\quad O\times(0,T).
\end{align}
\begin{align} \label{25.04.15.E2}
\nabla\cdot\textbf{y}=0 \quad \mbox{in}\quad O\times(0,T)
\end{align}
with initial and boundary conditions,
\begin{align} \label{25.04.15.E3}
 \textbf{y}&=0 \qquad \mbox{in}\quad\partial O\times(0,T)\\\label{28.03.15.E9}
 \textbf{y}&=\textbf{y}_0 \qquad \mbox{in}\quad O\times\{0\}.
\end{align}
Here, $\mu= \frac{2\kappa}{\lambda} >0 $ and the kernel $\beta(t)=\gamma e^{-\delta t}$ with $\gamma=\frac{2}{\lambda}(\nu- \frac{\kappa}{\lambda})>0$ and $\delta=\frac{1}{\lambda}>0.$\\ For further details we refer Goswami and Pani \cite{Gos} and the references therein.

There is considerable amount of work available in the literature regarding the Oldroyd model. Oskolkov \cite{Os} established the global well-posedness of the classical solution in two-dimensions following the celebrated work of Ladyzhenskaya \cite{La} on the solvability of Navier-Stokes equations. Well-posedness theory was further investigated by many other mathematicians (see \cite{AS}, \cite{KO}, to name a few) under different regularity of initial conditions. In three-dimensions, one can at-most expect local-in-time solvability result with arbitrary initial data and global-in-time result for sufficiently small initial data, much like the Navier-Stokes equations. It is worth to note the work of Lions and Masmoudi \cite{LM}, where the authors considered a more general Oldryod model (with much stronger non-linear coupling) and proved the existence of global weak solutions for general initial conditions. 
 
In \cite{Sobolev}, Sobolevskii explained the behaviour of the solution as $t \rightarrow \infty $ under some stabilization conditions like positivity of the first eigenvalue of a self-adjoint spectral problem
introduced therein and H\''{o}lder continuity of the function $\Phi= e^{\delta_{0}t}(f (x, t) - f_{\infty}(x))$, where
$f_{\infty} = \limsup_{t \rightarrow \infty} f$ and $\delta_{0}> 0,$ using energy arguments and positivity of the integral operator.  In \cite{MML}, Marinho et al. established exact controllability for the Oldroyd model in finite-dimensional system using the Hilbert Uniqueness Method in combination with the Schauder's fixed point.

Our aim, in this work, is to design a feedback controller with 
support in an arbitrary open subset $O_0\subset O$ such that the solution $\textbf{y}$ around equilibrium solution $\textbf{y}_{e}=0$ is exponentially stabilized with decay rate $\gamma$ for $0 <\gamma <\delta$. The linearized (around  $\textbf{y}_{e}=0$) control system corresponding to \eqref{25.04.15.E1}-\eqref{28.03.15.E9} is given by
\begin{align} \label{21.02.16.E1}
 \partial_{t}\textbf{y}-\mu\Delta\textbf{y}-\int_{0}^{t}\beta(t-\tau)\Delta\textbf{y}(x,\tau) 
 \,d\tau+\nabla p &= u \,\chi_{O_0}  \qquad \mbox{in}\quad O\times(0,T) \\
\nabla\cdot\textbf{y} &= 0 \quad \mbox{in}\quad O\times(0,T)
\end{align}
with initial and boundary condition,
\begin{align} \label{21.02.16.E3}
 \textbf{y}&=0 \qquad \mbox{in}\quad\partial O\times(0,T)\\\label{28.03.16.E12}
 \textbf{y}&=\textbf{y}_0 \qquad \mbox{in}\quad O\times\{0\}.
\end{align}
Let $\mathbb{H}$ and $\mathbb{V}$ denote the divergent free Hilbert spaces given by:
\begin{align*} 
 &\mathbb{V}:=\left\{\textbf{v}\in\mathbb{H}_{0}^{1}(O,\mathbb{R}^2):\,\nabla\cdot\textbf{v}=0\,\, \mbox{in }\,O\right\}
\qquad\mbox{with}\qquad
\|\textbf{y}\|_{\mathbb{V}}=\left(\nabla\textbf{y},\nabla\textbf{y}\right)_{\mathbb{L}^2}\\
&\mbox{and} \qquad
 \mathbb{H}:= \mbox{closure of }{\mathbb{V}}\quad \mbox{in}\quad \mathbb{L}^2\quad \mbox{norm},\qquad
 \|\textbf{y}\|_{\mathbb{H}}=(\textbf{y},\textbf{y})_{\mathbb{L}^2}=|\textbf{y}|.
\end{align*}
Let $P$ be the Helmholtz-Hodge projection
\begin{align*} \label{25.04.15.E5}
P:\mathbb{L}^2(O,\mathbb{R}^{2})\rightarrow \mathbb{H},
\end{align*}
and $A:\mathbb{H}^2(O,\mathbb{R}^{2})\cap \mathbb{V} \to\mathbb{H},$ (Stokes operator) be defined by 
\begin{align}
A\textbf{y}=-\mu P\Delta\textbf{y}.
\end{align}
For additional information regarding these spaces and the operator $A,$ we refer Temam \cite{Temam}.\\
 In the Hilbert space $\mathbb{H}$, the  system \eqref{21.02.16.E1}-\eqref{28.03.16.E12} is given as
\begin{align*} 
\frac{d\textbf{y}}{dt}+ A \textbf{y}(t)+\int_{0}^{t}\beta(t-s)A\textbf{y}(s)ds&=P(u \,\chi_{O_0}) \quad\mbox{for all}\quad t > 0,\\
\textbf{y}(0)=\textbf{y}_{0},\quad \beta(0)=1.
\end{align*}
Using Riesz-Fredholm theory, we can conclude that that $A$ has a countable set of real eigenvalues 
$\lambda_{j}$ each of which is of finite algebraic multiplicity and  corresponding
set of eigenvectors $\phi_{j}$, that is,
\begin{align*} 
A\phi_{j}&= \lambda_{j} \phi_{j}\qquad j=1,2,...\\
\mbox{with}\quad \lambda_{j}&\to\infty,\quad j\to \infty.
\end{align*}
and for each $\lambda_{j},$ there is a finite number $m_{j}$ of linearly independent eigenvectors 
$\{\phi_{j}^{i}\}_{i=1}^{m_{j}}$ where $m_{j}$
is called the multiplicity of  $\lambda_{j}.$ As $A$ is self-adjoint, we note that $\{\phi_{n}\}_{n\in\mathbb{N}}$
forms an orthonormal basis in $\mathbb{H}$.
Now using the theory established in the previous sections, one can prove existence of finite dimensional real controller for  \eqref{21.02.16.E1}-\eqref{28.03.16.E12} analogus to Theorem \ref{28.02.16.T1} and Theorem \ref{15.02.16.T10}. We state below our main results of this section.
\begin{thm} \label{21.02.16.T1}
Let $\alpha \in [\frac{1}{2},\frac{3}{4}].$ Then for each $\gamma,\,0<\gamma<\delta$ and $M,$ as before, there is a linear
self-adjoint operator $R:D(R)\subset \mathbb{H}\rightarrow \mathbb{H}$ such that for some constants $0 <a_{1} <a_{2} <\infty,\,\, C_{1}>0$ 
we have:
\begin{itemize}
\item[(i)] The following equivalence inequality holds true:
\begin{align*} 
a_{1}|A^{\alpha-\frac{1}{2}}\textbf{y}|^{2}\leqslant (R\textbf{y},\textbf{y})
\leqslant a_{2}|A^{\alpha-\frac{1}{2}}\textbf{y}|^{2} 
\quad\mbox{for all}\quad \textbf{y} \in D(A^{\alpha-\frac{1}{2}}).
\end{align*}
\item[(ii)] The operator $R:D(A^{2\alpha-1}) \rightarrow \mathbb{H}$ is bounded, i.e.,
\begin{align*}
|R\textbf{y}| \leqslant C_{1}|A^{2\alpha-1}\textbf{y}| \quad\mbox{for all}\quad \textbf{y} \in D(A^{2\alpha-1}).
\end{align*}
\item[(iii)] $R$ satisfies the following algebraic Riccati equation:
\begin{align*}
\left( A \textbf{y}+\int_{0}^{t}\tilde{\beta}(t-s)A\textbf{y}(s)
 \,ds-\gamma\textbf{y},R\textbf{y}\right)+\frac{1}{2}\sum_{i=1}^{M}\left(\psi_{i},R\textbf{y}\right)_{0}^{2}
 =\frac{1}{2}|A^{\alpha}\textbf{y}|^2 \\
 \mbox{for all}\quad \textbf{y} \in D(A) 
\end{align*}
\end{itemize}
where $\tilde{\beta}(t)=e^{-(\delta-\gamma)t}.$
Moreover the Feedback controller
\begin{align*}
u^{*}(t)=-P\left(m\sum_{j=1}^{M}(\psi_{j},R\textbf{y}(t))_{0}\psi_{j}\right)
\end{align*}
exponentially stabilizes the linear system
\begin{align*} 
 \frac{d\textbf{y}}{dt}+ A \textbf{y}(t)+\int_{0}^{t}\beta(t-s)A\textbf{y}(s)
 \,ds&=u^{*}(t),t\geqslant 0\\
 \textbf{y}(0)&=\textbf{y}_0
\end{align*}
that is, the solution $\textbf{y}^*(t)$ to the corresponding closed loop system satisfies
\begin{align*}
|A^{\alpha-\frac{1}{2}}\textbf{y}^*(t)| &\leqslant \e^{-\gamma t}|A^{\alpha-\frac{1}{2}}\textbf{y}_0|
\end{align*}
and
\begin{align*}
\int_{0}^{\infty}\e^{2\gamma t}|A^{\alpha}\textbf{y}^*(t)|^2dt&\leqslant C|A^{\alpha-\frac{1}{2}}\textbf{y}_0|^2
\end{align*}
\end{thm}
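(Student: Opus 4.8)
The strategy is to identify the linearized Oldroyd control system \eqref{21.02.16.E1}--\eqref{28.03.16.E12}, recast in $\mathbb H$, with the abstract controlled PIDE \eqref{PIDE2}--\eqref{PIDE4}, and then to invoke Theorem \ref{26.12.15.T1}. First I would take $H=\mathbb H$, let $A=-\mu P\Delta$ be the Stokes operator with $D(A)=\mathbb H^{2}(O,\mathbb R^{2})\cap\mathbb V$, and recall from Temam \cite{Temam} that $A$ is closed, densely defined, self-adjoint and positive definite with compact resolvent; hence Theorem \ref{10.02.16.T1} and the whole spectral analysis of Section~3 apply, and $\{\phi_{n}\}$ is an orthonormal basis of $\mathbb H$. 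The memory kernel $\beta(t)=\gamma e^{-\delta t}$ fits \eqref{PIDE11}--\eqref{PIDE12} after the harmless normalization absorbing the multiplicative constant into the convolution; this only rescales the limit of $\mu_{n}^{-}$ (hence the threshold $\omega_{0}$), and the admissible range $0<\gamma<\delta$ — dictated, as in Theorem \ref{26.12.15.T1}, by the shifted kernel $\tilde\beta(t)=e^{-(\delta-\gamma)t}$ — is preserved. Finally take $Bu=P(mu)$, where $m$ is the localizing function supported in $O_{0}$ (for instance $m=\chi_{O_{0}}$ or a mollification of it), so that $B\in\mathcal L(\mathbb L^{2}(O_{0},\mathbb R^{2}),\mathbb H)$; then \eqref{21.02.16.E1}--\eqref{28.03.16.E12} is precisely an instance of \eqref{PIDE2}--\eqref{PIDE4}.

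The only nontrivial input is the verification of the rank conditions \eqref{14.02.16.E11} and \eqref{22.02.16.E3}, which for this $B$ reduce to a unique continuation statement for Stokes eigenfunctions. Unwinding the construction of $\overline{Q}_{2NM}$ (resp.\ $\overline{\overline{Q}}_{2NM}$) from $C_{NM}=\{(B\phi_{i}^{*},\phi_{j}^{*})\}$, the required full rank of each block $\overline{Q}_{k}$ is equivalent to the following: for every unstable eigenvalue $\lambda_{k}$ of $A$, the functionals $\mathbf v\mapsto\int_{O_{0}}m\,\phi_{k}^{i}\cdot\mathbf v\,dx$, $i=1,\dots,m_{k}$, are linearly independent on the eigenspace $\mathrm{span}\{\phi_{k}^{i}\}_{i=1}^{m_{k}}$; equivalently, no nonzero combination $\phi=\sum_{i}c_{i}\phi_{k}^{i}$ vanishes on the open set where $m\equiv 1$. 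Since such a $\phi$ solves the stationary Stokes system $-\mu\Delta\phi+\nabla p=\lambda_{k}\phi$, $\nabla\cdot\phi=0$ in $O$ with smooth coefficients, the unique continuation principle for the Stokes operator (eliminate the pressure to obtain an elliptic system for $\phi$ and apply Aronszajn's theorem; cf.\ Fabre--Lebeau) forces $\phi\equiv 0$ in $O$. This establishes \eqref{14.02.16.E11} in the semisimple case and \eqref{22.02.16.E3} in the non-semisimple case, where the functions $\Phi_{i}$ are taken in $\mathrm{lin\,span}\{P_{u}^{*}\phi_{i}\}$ exactly as in Theorem \ref{28.02.16.T1}.

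With the hypotheses in place, Theorems \ref{28.02.16.T1} and \ref{15.02.16.T10} supply a real, finite-dimensional stabilizing controller for the linearized system, and Theorem \ref{26.12.15.T1}, applied with $\alpha\in[\tfrac12,\tfrac34]$, $H=\mathbb H$, $A$ the Stokes operator and $\psi_{i}$ the biorthogonal system from Theorem \ref{15.02.16.T10}, produces the self-adjoint operator $R$ satisfying the equivalence inequality~(i), the boundedness~(ii) on $D(A^{2\alpha-1})$, the algebraic Riccati equation~(iii), and the closed-loop decay estimates corresponding to \eqref{14.02.16.E113} and \eqref{14.01.15.E3}. It remains to rewrite the feedback law: since $B\psi_{j}=P(m\psi_{j})$, $R\mathbf y(t)\in\mathbb H$, $P$ is the orthogonal projection onto $\mathbb H$ and $m$ is real-valued, one has $(B\psi_{j},R\mathbf y(t))=(m\psi_{j},R\mathbf y(t))_{\mathbb L^{2}}=(\psi_{j},R\mathbf y(t))_{0}$, so that the abstract controller $u^{*}(t)=-\sum_{j}(B\psi_{j},R\mathbf y(t))\psi_{j}$ takes the stated form $u^{*}(t)=-P\bigl(m\sum_{j=1}^{M}(\psi_{j},R\mathbf y(t))_{0}\psi_{j}\bigr)$, which is supported in $O_{0}$.

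I expect the unique continuation property for Stokes eigenfunctions restricted to the arbitrary open set $O_{0}$ to be the main obstacle: that is the single place where the geometry of the control region enters and where an external result must be invoked, everything else being a transcription of the abstract theory of Sections~4--5. The normalization of the memory kernel and the precise value of the decay threshold also deserve a careful statement so that the range $0<\gamma<\delta$ is correctly justified, but this is routine.
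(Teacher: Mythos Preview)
Your proposal is correct and follows the same approach the paper takes: the paper's entire proof is the single sentence ``direct application of Theorem \ref{26.12.15.T1} and Theorem \ref{03.04.16.T1}.'' You have actually supplied more than the paper does, since you explicitly verify the rank hypotheses \eqref{14.02.16.E11}/\eqref{22.02.16.E3} for the localized control operator $Bu=P(mu)$ via unique continuation for Stokes eigenfunctions and you address the kernel normalization, both of which the paper passes over in silence; these additions are sound and make the reduction genuinely complete rather than merely asserted.
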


where $(.,.)_{o}$ denotes the inner product in $(L^{2}(O_{0}))^{d}.$\\

 \begin{thm} \label{03.04.16.T2}
 Let $\alpha \in [0,\frac{1}{2}).$ Then for each $\gamma,\,0<\gamma<\delta $ and $M$ (as in $(\ref{03.03.16.E1})$),
 there is a bounded linear self-adjoint positive semidefinite operator $\tilde{R}:\mathbb{H}\rightarrow \mathbb{H}$ such that $\tilde{R}\in\mathcal{L}(\mathbb{H},D(A))$  
and $\tilde{R}$ satisfies the following algebraic Riccati equation:
\begin{align*}
\left( A \textbf{y}+\int_{0}^{t}\tilde{\beta}(t-s)A\textbf{y}(s)
 \,ds-\gamma\textbf{y},\tilde{R}\textbf{y}\right)+\frac{1}{2}\sum_{i=1}^{M}\left(\psi_{i},R\textbf{y}\right)_{0}^{2}
 =\frac{1}{2}|A^{\alpha}\textbf{y}|^2 \\
 \mbox{for all}\quad \textbf{y} \in D(A), 
\end{align*}
where $\tilde{\beta}(t)=e^{-(\delta-\gamma)t}.$
Moreover the feedback controller
\begin{align*} 
u^{*}(t)=-P\left(m\sum_{j=1}^{M}(\psi_{j},R\textbf{y}(t))_{0}\psi_{j}\right)
\end{align*}
exponentially stabilizes the linear system
\begin{align*} 
 \frac{d\textbf{y}}{dt}+ A \textbf{y}(t)+\int_{0}^{t}\beta(t-s)A\textbf{y}(s)
 \,ds&=u^*(t),t\geqslant 0.\\
 \textbf{y}(0)&=\textbf{y}_0
\end{align*}
that is, the solution $\textbf{y}^*$ to the corresponding closed loop system satisfies
\begin{align*} 
|\textbf{y}^*(t)| &\leqslant C\e^{-\gamma t}|\textbf{y}_0|,
\end{align*}
where $(.,.)_{o}$ denotes the inner product in $(L^{2}(O_{0}))^{d}.$
\end{thm}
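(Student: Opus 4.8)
The plan is to deduce Theorem~\ref{03.04.16.T2} from the abstract result of Theorem~\ref{03.04.16.T1} (the $\alpha\in[0,\tfrac12)$ case of the main Riccati theorem), by checking that the Stokes realization of the linearized Oldroyd problem satisfies all the standing hypotheses of the abstract theory, and then rewriting the abstract feedback law in the localized form stated in the theorem.

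First I would verify the structural assumptions on the state operator. The Stokes operator $A=-\mu P\Delta$ with domain $\mathbb{H}^2(O,\mathbb{R}^2)\cap\mathbb{V}$ is closed, densely defined, self-adjoint and positive definite on $\mathbb{H}$, and since $O$ is bounded with smooth boundary the embedding $D(A)\hookrightarrow\mathbb{H}$ is compact, so $(\lambda I-A)^{-1}$ is compact for $\lambda\in\rho(A)$ (Temam). Theorem~\ref{10.02.16.T1} then supplies the countable family of real eigenvalues $\lambda_j\to\infty$ with finite multiplicities $m_j$ and the orthonormal eigenbasis $\{\phi_n\}$ of $\mathbb{H}$, exactly as recorded in Section~7; moreover the convolution kernel $\beta(t)=\gamma e^{-\delta t}$ and the relation $\omega_0=\delta+1$ place the admissible decay range $0<\gamma<\delta$ of the statement inside the range $0<\gamma<\omega_0$ required by the abstract theory. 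The control operator is $B\in\mathcal{L}(U,\mathbb{H})$ with $U=(L^2(O_0))^2$ and $Bv=P(m v)$, where $m=\chi_{O_0}$ (or a smooth localizer supported in $O_0$), which is clearly bounded.

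The one genuinely new point — and the step I expect to be the main obstacle — is the verification of the rank conditions \eqref{14.02.16.E11} and \eqref{22.02.16.E3}, which underlie Theorems~\ref{28.02.16.T1} and \ref{15.02.16.T10} on which Theorem~\ref{03.04.16.T1} rests. The mechanism is the unique continuation property for the Stokes operator: for each unstable eigenvalue $\lambda_j$ no nontrivial element of $\mathrm{lin\,span}\{\phi_j^i\}_{i=1}^{m_j}$ vanishes on the open set $O_0$, equivalently the restriction map $\mathrm{lin\,span}\{\phi_j^i\}\to(L^2(O_0))^2$ is injective. Granting this, one can select the family $\{\psi_i\}$ (respectively the $\Phi_i\in\mathrm{lin\,span}\{P_u^*\phi_k\}$ in the non-semisimple case), with $M=\max_k m_k$ control directions, so that the matrices $\overline{Q}_k$, $\overline{\overline{Q}}_k$ attain full rank $m_k$, which is precisely \eqref{14.02.16.E11}--\eqref{22.02.16.E3}. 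Here I would invoke the known unique continuation / analyticity results for Stokes eigenfunctions (e.g.\ the results of Fabre and Lebeau); everything else in the reduction is bookkeeping, whereas this is the only place where the fine structure of the Stokes operator, rather than its abstract spectral properties alone, is used.

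With the hypotheses in force, Theorem~\ref{03.04.16.T1} applied with $\alpha\in[0,\tfrac12)$ yields, for each $0<\gamma<\delta$ and the above $M$, a bounded self-adjoint positive semidefinite operator $\tilde R:\mathbb{H}\to\mathbb{H}$ with $\tilde R\in\mathcal{L}(\mathbb{H},D(A))$ solving the algebraic Riccati equation, together with the feedback $-\sum_{j=1}^M(\tilde R\mathbf y(t),B\psi_j)\psi_j$ stabilizing the closed loop system at rate $\gamma$ and the estimate $|\mathbf y^\ast(t)|\le C\e^{-\gamma t}|\mathbf y_0|$. It then remains to recast this in the intrinsic form of the statement: using $\tilde R\mathbf y(t)\in\mathbb{H}$, self-adjointness of $P$, and $B\psi_j=P(m\psi_j)$, I would compute $(\tilde R\mathbf y(t),B\psi_j)=(\tilde R\mathbf y(t),m\psi_j)_{\mathbb{L}^2}=(\psi_j,\tilde R\mathbf y(t))_0$, so that the forcing $B u^\ast$ actually driving the fluid equation equals $-P\big(m\sum_{j=1}^M(\psi_j,\tilde R\mathbf y(t))_0\,\psi_j\big)$, which is exactly the claimed controller; the Riccati identity written in terms of $(\cdot,\cdot)_0$ and the exponential decay $|\mathbf y^\ast(t)|\le C\e^{-\gamma t}|\mathbf y_0|$ carry over verbatim from Theorem~\ref{03.04.16.T1}, completing the argument.
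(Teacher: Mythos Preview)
Your proposal is correct and follows the paper's own approach: the paper's proof of this theorem is the single sentence ``The proof of above theorems is a direct application of Theorem~\ref{26.12.15.T1} and Theorem~\ref{03.04.16.T1},'' and you have simply filled in the details that this sentence leaves implicit. In particular, your explicit verification of the rank conditions \eqref{14.02.16.E11}--\eqref{22.02.16.E3} via the unique continuation property of Stokes eigenfunctions, and your computation translating the abstract feedback $-\sum_j(\tilde R\mathbf y,B\psi_j)\psi_j$ into the localized form $-P(m\sum_j(\psi_j,\tilde R\mathbf y)_0\psi_j)$, are exactly the steps one needs to make the ``direct application'' rigorous, and the paper does not spell either of them out.
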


The proof of above theorems is a direct application of Theorem \ref{26.12.15.T1} and Theorem \ref{03.04.16.T1}.

\subsection{Application to Jeffreys fluid}
Let $\Omega\subset\mathbb{R}^2$ be a bounded domain with $\partial \Omega\in C^2.$ Now we consider the following 
system for $T\in(0,\infty]$ for the velocity vector $\textbf{y},$ pressure $p$ and the fluid stress tensor $\tau$ of a viscoelastic
Jeffreys fluid model:
\begin{align}\label{26.03.16.E1}
 \partial_t\textbf{y}-\mu\Delta\textbf{y}+\nabla p&=\nabla\cdot \tau+ u\chi_{O_0}\quad\mbox{in}\quad\Omega\times(0,T)\\
 \nabla\cdot\textbf{y}&=0\quad\mbox{in}\quad\Omega\times(0,T)\\\label{28.03.16.E6}
 \partial_t\tau+\lambda\tau&=2\kappa D\textbf{y}\quad\mbox{in}\quad\Omega\times(0,T)\\
 \textbf{y}&=0\quad\mbox{on}\quad\partial\Omega\times(0,T)\\\label{28.03.16.E7}
 \textbf{y}(\cdot,0)=\textbf{y}_0&\qquad \tau(\cdot,0)=\tau_0 \quad\mbox{in}\quad\Omega.
\end{align}
where $\mu,\lambda$ and $\kappa$ are positive constants and $D\textbf{y}$ is the symmetrized gradient
tensor defined by $$D\textbf{y}:=\frac{1}{2}\left(\nabla\textbf{y}+\nabla^t\textbf{y}\right).$$ For additional information on the physical meaning of these parameters,
see for instance Renardy et al. \cite{RHN}, Joseph \cite{Jos}.  For Jeffreys fluid, approximate controllability results have been proved in Chowdhury et al. \cite{Chow}.

Note that from the above equation $\tau$ can be written as
\begin{align}\label{26.03.16.E2}
 \tau(t)=e^{-\lambda t}\tau_0+2\kappa\int_0^t e^{-\lambda(t-s)}D\textbf{y}(s)ds\quad\forall\,t>0,
\end{align}
Using \eqref{26.03.16.E2}, and Helmholtz-Hodge projection, the system \eqref{26.03.16.E1})-\eqref{28.03.16.E7} becomes
\begin{align*}
 \partial_t\textbf{y}+A\textbf{y}+\frac{\kappa}{\mu}\int_0^t e^{-\lambda(t-s)}A\textbf{y}(s)ds
 &=e^{-\lambda t}\nabla\cdot \tau_0+ P(u\chi_{O_0})\quad\mbox{for all}\ t>0,\\
 \textbf{y}(0)&=\textbf{y}_0.
\end{align*}

The viscoelastic fluids of the Jeffreys kind, can be used as first approximations (taking into account that $\textbf{y}, \tau$ are small) of the  nonlinear system, \eqref{25.04.15.E1}-\eqref{28.03.15.E9}, see Doubova et al. \cite{Dou}, Joseph \cite{Jos}. Thus, results analogous to Theorem \ref{21.02.16.T1}, Theorem \ref{03.04.16.T2} hold true for Jeffreys fluid as well.

\section{Further Remarks}
Many interesting questions arise which are still open for Oldroyd fluid in particular and for abstract PIDE in general. The immediate extension of above work which authors are interested to take up is
exponential stabilization of the Oldroyd fluid model \eqref{25.04.15.E1}-\eqref{28.03.15.E9} around unstable non-zero steady state solution by means of feedback controller. 
In the current work we have studied stabilization via interior control. Similar questions are interesting when control is applied on the boundary. Thus authors wish to investigate exponential stabilization of the abstract PIDE around unstable stationary solution via boundary control in feedback form and 
exponential stabilization of  Oldroyd fluid model and Jeffreys fluid model
around  unstable  stationary solution, by means of a feedback boundary control.

\medskip\noindent
{\bf Acknowledgements:} Utpal Manna's work has been supported by National
Board of Higher Mathematics (NBHM), Govt. of India. All
the authors would like to thank Indian Institute of Science Education
and Research Thiruvananthapuram for providing stimulating scientific environment and resources. Authors would like to thank Amiya K Pani from IIT Bombay for useful discussions.

\bigskip\noindent




\end{document}